\theoremstyle{plain}
\newtheorem{Theorem}{Theorem}[section]
\newtheorem{Lemma}[Theorem]{Lemma}
\newtheorem{Corollary}[Theorem]{Corollary}
\newtheorem{Proposition}[Theorem]{Proposition}
\theoremstyle{definition}
\newtheorem{Definition}[Theorem]{Definition}
\newtheorem{Notation}[Theorem]{Notation}
\theoremstyle{remark}
\newtheorem{Remark}[Theorem]{Remark}
\newcounter{condition}
\renewcommand{\thecondition}{C\arabic{condition}}
\newenvironment{Condition}{\begin{trivlist}\refstepcounter{condition}\item[ \textbf{Condition}~\textbf{\thecondition.}]\itshape}{\end{trivlist}}
\newcounter{assumption}
\renewcommand{\theassumption}{A\arabic{assumption}}
\newcommand{\dd}[1]{{\operatorname{d}}#1}
\newcommand{\R}{\mathds{R}}
\newcommand{\Q}{\mathds{Q}}
\newcommand{\E}{\mathds{E}}
\newcommand{\N}{\mathds{N}}
\newcommand{\1}{\mathds{1}}
\newcommand{\Co}{\mathcal{C}}
\newcommand{\Prob}{\mathds{P}}
\newcommand{\norm}[1]{\left\lVert #1 \right\rVert}
\newcommand{\abs}[1]{\left| #1 \right|}
\DeclareMathOperator{\sgn}{sgn}
\title{On the Strong Feller Property for Stochastic Delay Differential Equations with Singular Drift}
\author{ Stefan Bachmann\\  {\small Email: bachmann@math.uni-leipzig.de} \\ \multicolumn{1}{p{.55\textwidth}}{\normalsize\centering\emph{Institut f\"ur Mathematik, Universit\"at Leipzig, Augustusplatz 10, 04109 Leipzig, Germany}}}
\begin{document}
	\maketitle
	
	\begin{abstract}
		\begin{center}
			\textbf{Abstract}
		\end{center}
		In this paper, we prove the strong Feller property for stochastic delay (or functional) differential equations with singular drift. We extend an approach of Maslowski and Seidler to derive the strong Feller property of those equations, see \cite{Maslowski2000}. The argumentation is based on the well-posedness and the strong Feller property of the equations' drift-free version. To this aim, we investigate a certain convergence of random variables in topological spaces in order to deal with discontinuous drift coefficients.\\ \\
		\emph{Keywords:} Stochastic delay differential equations, stochastic functional differential equation, strong Feller property, singular drift, Zvonkin's transformation.\\ \\
		\emph{MSC2010:} Primary 34K50; secondary 60B10, 60B12, 60H10.
	\end{abstract}
	\section{Introduction}
	In this paper, we investigate a certain convergence of random variables in topological spaces and apply the results to prove an improved version of the strong Feller property of the following stochastic delay (or functional) differential equation (SDDE). 
	\begin{align}
		\begin{aligned}
			\label{eq}
			\dd{X^x}(t)&=B(t,X_t^x)\dd{t}+b(t,X^x(t))\dd{t}+\sigma(t,X^x(t))\dd{W}(t),\\
			X_0^x&=x
		\end{aligned}
	\end{align}
	where $W$ is a $d$-dimensional Brownian motion, $B:\R_{\geq0}\times C\left([-r,0],\R^d\right)\to\R^d$ is measurable and strictly sublinear in the second variable, $\sigma:\R_{\geq0}\times\R^d\to\R^{d\times d}$ is measurable, bounded, non-degenerate and Lipschitz in space and $b\in L_{loc}^q\left(\R_{\geq0};L^p\left(\R^d\right)\right)$ where $1<p,q$ fulfill
	\begin{align}
	\label{pq}
		\frac{d}{p}+\frac{2}{q}<1.
	\end{align}
	The well-posedness and stability of equation \eqref{eq} without a delay drift term, i.e. $B\equiv0$, has been studied thoroughly: Krylov and R\"ockner have shown existence and uniqueness for the special case $\sigma\equiv Id$ in \cite{KrylovRoeckner}, which had elaborated previous results, for example of Portenko \cite{Portenko}, Veretennikov \cite{Ver} and Zvonkin \cite{Zvonkin}. In \cite{RUTKOWSKI}, Rutkowski considered pathwise uniqueness for stochastic one-dimensional equations with singular drift involving local time conditions which were introduced by Barlow and Perkins. Mart\'inez and Gy\"ongy have proven well-posedness for non-constant $\sigma$ in \cite{Gyoengy2001}, however, under the stricter assumption $b\in L^{2d+2}(\R^{d+1})$. In \cite{Zhang2011}, Zhang was able to improve it further by only assuming $\abs{\nabla_x\sigma^{i,j}}\in L_{loc}^q\left(\R_{\geq0};L^p\left(\R^d\right)\right)$, $i,j=1,\dots, d$. Gawarecki and Mandrekar have studied SDEs with discontinuous drift and their connection to unbounded spin-systems in \cite{Gawarecki}. Blei and Engelbert have analyzed one-dimensional equations with so-called generalized drifts in \cite{BLEI} and haven given sufficient and necessary criteria for existence and uniqueness in distribution. A nonlinear Kolmogorov equation for stochastic functional delay differential equations with jumps has been proven by Cordoni, Di Persio and Oliva in \cite{Cordoni2017}. Cordoni and Di Persio have also shown existence and uniqueness of mild solutions for stochastic reaction-diffusion equations on networks with dynamic time-delayed boundary conditions in \cite{CORDONI2017583} and have considered an application to a stochastic optimal control problem. In \cite{Stefan1}, we extended the well-posedness result of Zhang \cite{Zhang2011} to the delay case, essentially by using a combination of Zvonkin's transformation, several Girsanov techniques and a stochastic Gronwall lemma from von Renesse and Scheutzow in \cite{Scheutzow,Renesse}. The existence of a unique strong solution for SDDEs with singular drift has also been shown in \cite{ChinesischerTyp}.
	
	Da Prato, Elworthy and Zabczyk have studied the strong Feller property for stochastic semilinear equations with unbounded coefficients in \cite{DaPrato}. The strong Feller property with respect to the state space $\R^d$ of equation \eqref{eq} with unbounded non-functional drift has been shown by Zhang in \cite{Zhang2011}. However, in this paper we are interested in the state space of path segments $C([-r,0],\R^d)$. Several Harnack inequalities have been studied for stochastic delay differential equations, which imply the strong Feller property. Es-Sarhir, von Renesse and Scheutzow have investigated the case $b\equiv0$ and $\sigma\equiv const$ in \cite{es-sarhir2009}. Wang and Yuan have established results for non-constant and uniformly non-degenerate diffusion coefficients, which do not depend on the past, in \cite{WangYuan}. By remark 1.4 in \cite{es-sarhir2009}, the strong Feller property might not be given if the diffusion term is of real functional nature. Both papers are based on a coupling technique.
	
	However, we do not use a coupling technique but the probabilistic approach of Maslow-\\
	ski and Seidler, see \cite{Maslowski2000}. In order to deal with discontinuous drift coefficients, we consider a certain convergence for topological spaces, see Theorem \ref{MainTheorem1}. As a result, we gain a simple method to derive the strong Feller property of equation \eqref{eq} from the well-posedness and the strong Feller property of the simpler special case $B\equiv 0$ and $b\equiv 0$.
	\begin{Notation}
		We denote by $\norm{\cdot}_{OP}$ and $\norm{\cdot}_{HS}$ the operator norm and respectively the Hilbert-Schmidt norm for matrices $A\in\R^{d\times d}$, i.e.
		$$\norm{A}_{op}=\sup\limits_{v\in\R^d,\abs{v}=1}\abs{Av}, \ \norm{A}_{HS}=\sqrt{\sum_{i,j=1}^{d}\abs{A^{i,j}}^2}.$$
		Additionally, we write for $a,b\in[-\infty,+\infty]$
		$$a\wedge b:=\min\{a,b\}, \ a\vee b:=\max\{a,b\}.$$
	\end{Notation}
	\begin{Notation}
		In the sequel, let $r>0$ be an arbitrary but fixed number and define
		$$\mathcal{C}:=C\left([-r,0],\R^d\right)$$
		equipped with the supremum norm $\norm{\cdot}_\infty$. For a process $X$ defined on $[t-r,t]$ with $t\geq0$, we write
		$$X_t(s):=X(t+s), \ s\in[-r,0].$$
		Furthermore, we introduce the following function spaces, which will be used later on: define for $0\leq S\leq T<\infty$ and $p,q\in(1,\infty)$
		$$
		\begin{array}{ll}
			L^q_p(S,T):=L^q\left([S,T];L^p\left(\R^d\right)\right), &L^q_p(T):=L^q_p(0,T),\\
			\mathds{H}^q_{2,p}(S,T):=L^q\left([S,T];W^{2,p}\left(\R^d\right)\right), &\mathds{H}^q_{2,p}(T):=\mathds{H}^q_{2,p}(0,T),\\
				H^q_{2,p}(S,T):=W^{1,q}\left([S,T];L^p\left(\R^d\right)\right)\cap\mathds{H}^q_{2,p}(,T), &H^q_{2,p}(T):=H^q_{2,p}(0,T),
		\end{array}
		$$
		equipped with the norm
		$$\norm{u}_{H^q_{2,p}(S,T)}:=\norm{\partial_tu}_{L^q_p(S,T)}+\norm{u}_{\mathds{H}^q_{2,p}(S,T)}, \ u\in H^q_{2,p}(S,T).$$
	\end{Notation}
	\begin{Notation}
			Let $(\Omega,\mathcal{F},\Prob)$ be some probability space. Then we denote by $\Prob^*$ the corresponding outer measure, i.e.
			$$\Prob^*(A):=\inf\left\{\Prob(B):A\subseteq B,B\in\mathcal{F}\right\}, \ A\subseteq\Omega.$$
	\end{Notation}
	\begin{Notation}
		In the sequel, we always equip topological spaces with their corresponding Borel $\sigma$-algebra, and subspaces with the usual subspace topology. Furthermore, if $(E,\mathcal{E})$ is some measurable space, we denote by $B_b(E)$ the space of bounded, measurable functions.
	\end{Notation}
	\begin{Notation}
		If not stated otherwise, $W$ will be a $d$-dimensional Brownian motion on some arbitrary but fixed probability space $(\Omega,\mathcal{F},\Prob)$ and every strong solution shall be defined on this space.
		
		However, weak solutions of equation $\eqref{eq}$ might be defined on different filtrated probability spaces. Therefore, we use the short hand notation $(X^x,\tilde{W}^x,\mathds{Q}^x)$ where $X^x$ is an adapted, continuous stochastic process, $\tilde{W}^x$ is an adapted Brownian motion, both with respect to some filtrated probability space $(\tilde{\Omega},\tilde{\mathcal{F}},\mathds{Q}^x,(\tilde{\mathcal{F}}_t)_{t\geq0})$, and $(X^x,\tilde{W}^x)$ solves equation \eqref{eq} with initial value $x$.
	\end{Notation}
	\begin{Condition}
		\label{singulardriftc}
		Let $p,q>1$ be given with \eqref{pq}. One has for every $T>0$
		$$b\in L^q_p(T).$$
	\end{Condition}
	\begin{Condition}
		\label{sigmac}
		Assume that for all $T>0$ there exists some $C_\sigma=C_\sigma(T)>0$ such that
		\begin{enumerate}
			\item $C_\sigma^{-1}I_{d\times d}\leq\sigma(t,x)\sigma(t,x)^\top\leq C_\sigma I_{d\times d} \ \forall t\in[0,T],x\in\R^d$,
			\item $\norm{\sigma(t,x)-\sigma(t,y)}_{HS}\leq C_\sigma\abs{x-y} \ \forall t\in[0,T],x,y\in\R^d$.
		\end{enumerate}
	\end{Condition}
	\begin{Condition}
		\label{delaydriftc}
		For $t\in[0,r)$ the function $x\mapsto B(t,x)$ is continuous and for all $T>0$ there exists some monotone increasing $g_T:\R_{\geq0}\to\R_{\geq0}$ with
		\begin{enumerate}
			\item $\abs{B(t,x)}\leq g_T(\norm{x}_\infty) \ \forall x\in\Co, t\in[0,T]$,
			\item $\lim\limits_{r\to\infty}g_T(r)/r=0$.
		\end{enumerate}
	\end{Condition}
	\begin{Condition}
		\label{delaydriftlip}
		For all $T>0$ there exists some $C_B=C_B(T)>0$ such that
		$$\abs{B(t,x)-B(t,y)}\leq C_B\norm{x-y}_\infty \ \forall t\in[0,T],x,y\in\Co.$$
	\end{Condition}
	The main results read as follows.
	\begin{Theorem}
		\label{StrongFeller}
		Assume conditions \eqref{singulardriftc}, \eqref{sigmac} and \eqref{delaydriftc}. Then for each initial value $x\in\mathcal{C}$, equation \eqref{eq} has a global weak solution $(X^x,\tilde{W}^x,\mathds{Q}^x)$, which is unique in distribution. Furthermore, one has the strong Feller property for all $t>r$, i.e.
		$$\lim\limits_{y\to x}\E _{\mathds{Q}^y}f(X^y_t)=\E_{\mathds{Q}^x}f(X^x_t) \ \forall f\in B_b(\Co).$$
		Moreover, if condition \eqref{delaydriftlip} is fulfilled, then equation \eqref{eq} has a unique strong solution and it holds
		$$\lim\limits_{y\to x}\E_\Prob\abs{f(X_t^y)-f(X_t^x)}=0 \ \forall f\in B_b(\Co).$$
	\end{Theorem}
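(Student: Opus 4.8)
The plan is to transfer the asserted properties from the drift-free equation — the pure diffusion $\dd{Z}=\sigma(t,Z(t))\dd{W}(t)$ obtained by setting $B\equiv0$ and $b\equiv0$ — to \eqref{eq} by a Girsanov change of measure in the spirit of \cite{Maslowski2000}, using Theorem \ref{MainTheorem1} to cope with the discontinuity of $f$ and of $b$. For well-posedness I would start from a strong solution $Z^x$ of the drift-free equation with $Z^x_0=x$, put $\theta^x_s:=\sigma^{-1}(s,Z^x(s))\bigl[B(s,Z^x_s)+b(s,Z^x(s))\bigr]$ and form the exponential $R^x_t:=\exp\!\bigl(\int_0^t\theta^x_s\cdot\dd{W}(s)-\tfrac12\int_0^t\abs{\theta^x_s}^2\dd{s}\bigr)$. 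By \eqref{sigmac} the matrix $\sigma^{-1}$ is bounded; the singular part $\int_0^t\abs{\sigma^{-1}b}^2\dd{s}$ is exponentially integrable by Krylov's estimate under the subcriticality \eqref{pq}, while the delay part is tamed by the strict sublinearity in \eqref{delaydriftc} together with the Gaussian tails of $\sup_{[0,t]}\abs{Z^x}$, so a Khasminskii--Novikov argument gives $\E R^x_t=1$. Defining $\mathds{Q}^x$ by $\dd{\mathds{Q}^x}=R^x_t\,\dd{\Prob}$ turns $Z^x$ into a weak solution of \eqref{eq}, and the reverse transformation writes any weak solution as the drift-free one reweighted by the path functional $R^x_t$; since the drift-free law is unique, this yields existence and uniqueness in distribution. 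Under \eqref{delaydriftlip} pathwise uniqueness holds — remove $b$ by Zvonkin's transformation and close with the stochastic Gronwall lemma of \cite{Scheutzow,Renesse} — so Yamada--Watanabe furnishes the unique strong solution; here the well-posedness can be quoted from \cite{Stefan}. By uniqueness in law, $P_tf(y):=\E_{\mathds{Q}^y}f(X^y_t)$ is a well-defined functional of the initial segment $y$.

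Next I would establish the drift-free strong Feller property on $\Co$ for $t>r$. The restriction $t>r$ is essential: only then does the segment $Z^x_t=(Z^x(t+s))_{s\in[-r,0]}$ live entirely on $[t-r,t]\subset(0,\infty)$, so by the Markov property $\mathrm{Law}(Z^x_t)=\int_{\R^d}p_{t-r}(x(0),\dd{z})\,\nu_z$, where $p$ is the transition kernel of the $\R^d$-valued diffusion and $\nu_z$ the law of its segment over $[t-r,t]$ started at $z$; in particular the law of $Z^x_t$ depends on $x$ only through the endpoint $x(0)$. Since $\sigma$ is non-degenerate and Lipschitz, standard parabolic estimates give a transition density continuous in the starting point, whence $x(0)\mapsto p_{t-r}(x(0),\cdot)$ is continuous in total variation by Scheff\'e's lemma, and therefore $x\mapsto\mathrm{Law}(Z^x_t)$ is total-variation continuous on $\Co$ (compare \cite{Zhang2011}).

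The heart of the matter is the limit $y\to x$ in $\E_{\mathds{Q}^y}f(X^y_t)=\E_\Prob\!\bigl[f(Z^y_t)\,R^y_t\bigr]$. Realising all $Z^y$ on one space through the same Brownian motion $W$, the Lipschitz bound \eqref{sigmac} and the fact that on $[0,\infty)$ the drift-free flow depends on $y$ only through $y(0)$ give $Z^y_t\to Z^x_t$ almost surely in $\Co$; Krylov's estimate and \eqref{delaydriftc} then yield $R^y_t\to R^x_t$ in probability, and since $\E R^y_t=\E R^x_t=1$ for these nonnegative densities, Scheff\'e's lemma upgrades this to $L^1(\Prob)$-convergence, in particular uniform integrability. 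The obstacle is that neither $f$ nor $b$ is continuous, so the almost sure convergence does \emph{not} pass through $f$, nor through the drift inside $R^y_t$. This is precisely what Theorem \ref{MainTheorem1} is built for: from the almost sure convergence $Z^y_t\to Z^x_t$, the convergence of the densities, and the total-variation continuity of the reference laws from the previous step, it delivers total-variation convergence of the reweighted laws $\nu^y:=\mathrm{Law}_{\mathds{Q}^y}(X^y_t)$ to $\nu^x$, hence $\int f\,\dd{\nu^y}\to\int f\,\dd{\nu^x}$ for every $f\in B_b(\Co)$ — the strong Feller property. I expect this reconciliation of almost sure convergence with the total-variation continuity of the limiting law, in the presence of a discontinuous drift, to be the main difficulty.

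For the final assertion, under \eqref{delaydriftlip} the solution is strong and the same Zvonkin--Gronwall estimate gives $X^y_t\to X^x_t$ in $\Prob$-probability. The strong Feller property just proved yields $\nu^y\to\nu^x$ in total variation, so for fixed $f\in B_b(\Co)$ I would use Lusin's theorem to pick $g\in C_b(\Co)$ with $\int\abs{f-g}\,\dd{\nu^x}$ small and split
$$\E_\Prob\abs{f(X^y_t)-f(X^x_t)}\leq\int\abs{f-g}\,\dd{\nu^y}+\E_\Prob\abs{g(X^y_t)-g(X^x_t)}+\int\abs{f-g}\,\dd{\nu^x}.$$
The first term is bounded by $\int\abs{f-g}\,\dd{\nu^x}+\norm{f-g}_\infty\norm{\nu^y-\nu^x}_{TV}$, the middle term tends to $0$ by continuity of $g$ and convergence in probability, and the third is small by construction; letting $y\to x$ and then refining $g$ gives $\E_\Prob\abs{f(X^y_t)-f(X^x_t)}\to0$.
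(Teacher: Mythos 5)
Your overall strategy coincides with the paper's: transfer everything from the drift-free equation via Girsanov in the Maslowski--Seidler style, use Theorem \ref{MainTheorem1}/\ref{goodconv} to push the discontinuous $f$ and the singular drift $b$ through the limit, obtain $L^1$-convergence of the densities from convergence in probability plus $\E R^y_t=\E R^x_t=1$ (your Scheff\'e step is the paper's Fatou argument), and upgrade to the ``improved'' statement using pathwise stability under \eqref{delaydriftlip} (your Lusin argument is a valid re-proof, on the Polish space $\Co$, of the direction of Theorem \ref{goodconv} that the paper simply cites). Two of your ingredients differ from the paper's without harm. First, for the drift-free strong Feller property the paper cites the Wang--Yuan log-Harnack inequality (Theorem \ref{martfeller}), whereas you derive it from the factorization of the segment law through $x(0)$ for $t>r$; this is a sensible alternative, but since $\sigma$ is merely measurable in time, ``standard parabolic estimates'' giving a transition density continuous in the starting point are not off-the-shelf --- it is cleaner to invoke the strong Feller property of the $\R^d$-valued diffusion (from \cite{Zhang2011}) twice, using that the composition of two strong Feller kernels is continuous in total variation. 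Second, your claim of total-variation convergence of the reweighted laws $\nu^y\to\nu^x$ overstates what the mechanism delivers: the argument (fixed density $R^x_t$ plus dominated convergence, plus $L^1$-convergence of densities) gives convergence for each fixed $f\in B_b(\Co)$, which is exactly the strong Feller property; uniformity in $f$ neither follows nor is needed.

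The genuine gap is uniqueness in distribution. You dispatch it with ``the reverse transformation writes any weak solution as the drift-free one reweighted by $R^x_t$; since the drift-free law is unique, this yields existence and uniqueness in distribution.'' But for an arbitrary weak solution $(X^x,\tilde{W}^x,\mathds{Q}^x)$ on an arbitrary filtered space you cannot apply Girsanov in this direction as stated: verifying Novikov for $\exp\bigl(\tfrac12\int_0^T\abs{\sigma^{-1}(B+b)}^2(s,X^x)\dd{s}\bigr)$ requires exponential moments of $\sup\abs{X^x}$ and a Krylov-type estimate along $X^x$, and in the paper precisely these bounds (Lemmas \ref{expfX} and \ref{expsupX}) are \emph{derived from} the distributional representation you are trying to prove, so your argument is circular. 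The paper's Theorem \ref{ExUni} breaks the circle by localization: stop $X^x$ at the exit time $\tau^n$ from a ball, so that the delay drift $B$ is bounded along the stopped path and Girsanov legitimately removes it; the stopped process then solves the non-delay singular SDE, whose uniqueness is Zhang's Theorem 1.3 in \cite{Zhang2011}; finally let $n\to\infty$, using that continuous paths on $[0,T]$ are bounded so the stopping times exhaust $[0,T]$. Without this step (or a substitute for it), existence of a weak solution stands, but the claimed uniqueness in distribution --- and with it the well-definedness of $y\mapsto\E_{\mathds{Q}^y}f(X^y_t)$, the very map whose continuity you then prove --- is unsupported.
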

	The following theorem is the key element for proving Theorem \ref{StrongFeller}.
	\begin{Theorem}
		\label{goodconv}
		Let $\left(\Omega,\mathcal{F},\Prob\right)$ be some probability space and $\left(E,d\right)$ be a metric space. Furthermore, let $X,X_n:\Omega\to E$, $n\in\N$ be measurable maps. Then the statement
		\begin{enumerate}
			\item
			\begin{enumerate}
				\item$\lim\limits_{n\to\infty}\Prob^*\left(d\left(X,X_n\right)\geq\varepsilon\right)=0 \ \forall\varepsilon>0$,
				\item$\lim\limits_{n\to\infty}\Prob_{X_n}\left(O\right)=\Prob_X\left(O\right),$ for all open $O\subset E$
			\end{enumerate}
		\end{enumerate}
		implies
		\begin{enumerate}
			\item[2.]$\lim\limits_{n\to\infty}\E\abs{f(X)-f(X_n)}=0 \ \forall f\in B_b(\Co).$
		\end{enumerate}
		Additionally, if there exists some null set $N\subset\Omega$ such that $X(\Omega\setminus N)$ is separable, then the converse implication is also true.
	\end{Theorem}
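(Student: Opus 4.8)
The plan is to prove the forward implication $1 \Rightarrow 2$ in three stages — first for indicators of open sets, then for indicators of arbitrary Borel sets via the regularity of finite Borel measures on metric spaces, and finally for all of $B_b(E)$ by uniform approximation — and then to establish the converse by reducing to a countable cover furnished by separability. Throughout, statement $2$ should of course be read with $f\in B_b(E)$.

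For the base case I would fix an open $O$ (the cases $O=\emptyset,E$ being trivial) and use, with $U=\{X_n\in O\}$ and $V=\{X\in O\}$, the identity $\E\abs{\1_O(X_n)-\1_O(X)}=\Prob(U\triangle V)=\Prob(U)+\Prob(V)-2\Prob(U\cap V)$. The term $\Prob(V\setminus U)=\Prob(X\in O,X_n\notin O)$ I would control through the $\delta$-interior $O_\delta:=\{x\in O:d(x,E\setminus O)>\delta\}$: the triangle inequality gives the inclusion $\{X\in O_\delta\}\cap\{d(X,X_n)<\delta\}\subseteq\{X_n\in O\}$, so that $\Prob(X\in O,X_n\notin O)\le\Prob_X(O\setminus O_\delta)+\Prob^*(d(X,X_n)\ge\delta)$. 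Sending $n\to\infty$ (using 1a) and then $\delta\downarrow0$ (using $O_\delta\uparrow O$ and continuity of the finite measure $\Prob_X$ from above) shows $\Prob(V\setminus U)\to0$, whence $\Prob(U\cap V)\to\Prob_X(O)$; combined with $\Prob(U)=\Prob_{X_n}(O)\to\Prob_X(O)$ from 1b, the identity yields $\E\abs{\1_O(X_n)-\1_O(X)}\to0$. Since $A\triangle B=A^c\triangle B^c$, the same conclusion holds verbatim for every closed set.

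For an arbitrary Borel set $A$ I would invoke that every finite Borel measure on a metric space is inner regular by closed and outer regular by open sets (no separability needed): given $\varepsilon>0$ pick closed $C\subseteq A\subseteq O$ open with $\Prob_X(O\setminus C)<\varepsilon$. Splitting $\E\abs{\1_A(X_n)-\1_A(X)}$ through $\1_C$ and using $0\le\1_A-\1_C\le\1_{O\setminus C}$ bounds it by $\Prob_{X_n}(O\setminus C)+\E\abs{\1_C(X_n)-\1_C(X)}+\Prob_X(O\setminus C)$. The middle term vanishes by the closed-set case, and since $O\setminus C$ is open, $\Prob_{X_n}(O\setminus C)\to\Prob_X(O\setminus C)<\varepsilon$ by 1b; as $\varepsilon$ is arbitrary the limit is $0$. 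Linearity extends this to simple functions, and since every $f\in B_b(E)$ is a uniform limit of simple functions while $\E\abs{f(X_n)-f(X)}$ is stable under uniform approximation, statement $2$ follows. I expect this to be the decisive use of hypothesis 1b, namely the control of $\Prob_{X_n}$ on the open collar $O\setminus C$ from above, where mere weak (Portmanteau) convergence would give the inequality in the wrong direction.

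For the converse, assume $2$ and that $S:=X(\Omega\setminus N)$ is separable. Statement 1b is immediate since $\abs{\Prob_{X_n}(O)-\Prob_X(O)}\le\E\abs{\1_O(X_n)-\1_O(X)}\to0$. For 1a, fix $\varepsilon>0$, cover $S$ by countably many balls of radius $\varepsilon/3$, and disjointify to obtain Borel sets $A_k$ of diameter $<\varepsilon$ with $S\subseteq\bigcup_kA_k$. Because two points of a common $A_k$ lie within $\varepsilon$, one obtains the measurable inclusion $\{d(X,X_n)\ge\varepsilon\}\subseteq N\cup\bigsqcup_k\{X\in A_k,X_n\notin A_k\}$, the union being disjoint as the $\{X\in A_k\}$ are. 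Hence $\Prob^*(d(X,X_n)\ge\varepsilon)\le\sum_k\Prob(X\in A_k,X_n\notin A_k)$; I would split this series at an index $K$ with tail $\sum_{k>K}\Prob_X(A_k)<\eta$ (possible since $\sum_k\Prob_X(A_k)\le1$), dominate the series tail by it, and send $n\to\infty$ in the finite head using $2$ with $f=\1_{A_k}$, giving $\limsup_n\Prob^*(d(X,X_n)\ge\varepsilon)\le\eta$ for every $\eta$. The main obstacle throughout is measurability: $d(X,X_n)$ need not be measurable when $E$ is non-separable, so each estimate must bound an outer measure by the genuine measure of a measurable superset, which is exactly why the countable cover — and hence separability — is indispensable for the converse but not for the forward direction.
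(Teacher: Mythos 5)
Your proof is correct, and it shares the paper's overall skeleton: reduce statement 2 from $B_b(E)$ to indicators of Borel sets, reduce Borel sets to open sets via regularity of Borel measures on metric spaces, control $\Prob(X\in O,X_n\notin O)$ using 1a, and prove the converse by a countable covering argument furnished by separability. The execution, however, differs at both key steps. The paper factors everything through general lemmas stated for gauge spaces (an abstract symmetric-difference approximation lemma, its topological corollary for outer regular laws, and a Lindel\"of-based equivalence between convergence in outer probability and the open-set condition), and it handles the open-set step qualitatively: 1a implies that every subsequence of $(X_n)$ has a further subsequence converging to $X$ almost surely, whence $\1_O(X)\1_{E\setminus O}(X_n)\to0$ in probability. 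You instead give a direct quantitative bound via the $\delta$-interior, $\Prob(X\in O,X_n\notin O)\le\Prob_X(O\setminus O_\delta)+\Prob^*(d(X,X_n)\ge\delta)$, which avoids the subsequence/Borel--Cantelli argument for outer measures altogether, and you pass from open to closed sets by complementation and to general Borel sets by two-sided (closed-inside/open-outside) regularity where the paper invokes its abstract lemma. In the converse, your disjointified cover by sets of diameter $<\varepsilon$ (balls of radius $\varepsilon/3$) is the same idea as the paper's Lindel\"of covering, but with tighter bookkeeping: the paper covers with balls of radius $\varepsilon$, for which the claimed inclusion into $\{X\in B_\varepsilon(x_i),\,X_n\notin B_\varepsilon(x_i)\}$ actually requires radius at most $\varepsilon/2$, so your version is the airtight form of that step. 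What the paper's heavier machinery buys is generality — it simultaneously yields Theorem \ref{MainTheorem1} for gauge spaces with Lindel\"of range — whereas your argument is more elementary and self-contained but tied to the metric setting; you also correctly note that $B_b(\Co)$ in statement 2 should read $B_b(E)$.
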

	Moreover, we give a version of Theorem \ref{goodconv} in a topologically more general setup, which reads as follows (the topological terminologies are given in subsection \ref{Preliminaries}).
	\begin{Theorem}
		\label{MainTheorem1}
		Let $\left(\Omega,\mathcal{F},\Prob\right)$ be some probability space and $\left(E,\mathcal{D}\right)$ be a gauge space. Furthermore, let $X,X_n:\Omega\to E$, $n\in\N$ be measurable maps and assume that $\Prob_X$ is outer regular and there exists some null set $N\subset\Omega$ such that $X(\Omega\setminus N)$ is Linde\"of. Then the following statements are equivalent
		\begin{enumerate}
			\item
			\begin{enumerate}
				\item$\lim\limits_{n\to\infty}\Prob^*\left(d\left(X,X_n\right)\geq\varepsilon\right)=0 \ \forall\varepsilon>0,d\in\mathcal{D},$ 
				\item$\lim\limits_{n\to\infty}\Prob_{X_n}\left(O\right)=\Prob_X\left(O\right)$ for all open $O\subset E$.
			\end{enumerate}
			\item $\lim\limits_{n\to\infty}\E\abs{f(X)-f(X_n)}=0 \ \forall f\in B_b(E).$
		\end{enumerate}
	\end{Theorem}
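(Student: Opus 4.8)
The plan is to establish both implications of the equivalence, treating $1\Rightarrow 2$ as the substantial direction (it is the gauge-space analogue of the hard half of Theorem \ref{goodconv}) and $2\Rightarrow 1$ as the easier converse. Throughout I write $Y:=X(\Omega\setminus N)$ for the Lindel\"of range of $X$ off the null set, and for $d\in\mathcal D$, $y\in E$, $\rho>0$ I abbreviate the pseudoball by $B_d(y,\rho)=\{z\in E:d(y,z)<\rho\}$; these generate the gauge topology, so every open $O$ is a union of finite intersections of such pseudoballs. Since for fixed $d$ the map $d(X,X_n)$ need not be $\mathcal F$-measurable, I keep the outer measure $\Prob^*$ wherever $1(a)$ is invoked and rely only on its monotonicity and finite subadditivity.

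For $2\Rightarrow 1$, statement $1(b)$ is immediate on taking $f=\1_O\in B_b(E)$, since $\abs{\Prob_{X_n}(O)-\Prob_X(O)}\le\E\abs{\1_O(X)-\1_O(X_n)}$. For $1(a)$ I would fix $d\in\mathcal D$ and $\varepsilon>0$, cover the Lindel\"of space $Y$ by the pseudoballs $B_d(z,\varepsilon/3)$, $z\in Y$, and extract a countable subcover; disjointifying produces a countable Borel partition $(A_k)_k$ of a set containing $Y$ with each $A_k$ of $d$-diameter at most $2\varepsilon/3<\varepsilon$. Then, up to $N$, one has $\{d(X,X_n)\ge\varepsilon\}\subseteq\bigcup_k\{X\in A_k,\,X_n\notin A_k\}$, whence $\Prob^*(d(X,X_n)\ge\varepsilon)\le\sum_k\E\abs{\1_{A_k}(X)-\1_{A_k}(X_n)}$. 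Splitting off a finite initial segment, controlling the tail by $\Prob(X\in\bigcup_{k>K}A_k)$ (small since $\bigcup_kA_k\supseteq Y$ has full measure) and applying $2$ to each $\1_{A_k}$ gives the claim.

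For $1\Rightarrow 2$ the key reduction is to prove, for every open $O$, that $(\star)$: $\E\abs{\1_O(X)-\1_O(X_n)}=\Prob(\{X\in O\}\triangle\{X_n\in O\})\to0$. I would write this as $\mathrm A_n+\mathrm B_n$ with $\mathrm A_n=\Prob(X\in O,X_n\notin O)$ and $\mathrm B_n=\Prob(X\notin O,X_n\in O)$. To bound $\mathrm A_n$ I first shrink $O$ from inside: outer regularity of $\Prob_X$ (equivalently inner regularity by closed sets) yields, given $\eta>0$, a closed $C\subseteq O$ with $\Prob_X(O\setminus C)<\eta$. Now $C\cap Y$ is a closed subspace of the Lindel\"of space $Y$, hence itself Lindel\"of; I cover it by those half-pseudoballs $V_y=\bigcap_{i=1}^{k_y}B_{d_i^y}(y,\rho_y/2)$ whose doubled version $U_y=\bigcap_{i=1}^{k_y}B_{d_i^y}(y,\rho_y)$ lies in $O$ (such $U_y$ exist for each $y\in C\subseteq O$ by openness), pass to a countable and then a finite subfamily $V_{y_1},\dots,V_{y_J}$ capturing all but $\eta$ of the mass of $\{X\in C\}$. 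The doubling is the decisive trick: on $\{X\in V_{y_j},\,X_n\notin O\}$ one has $X_n\notin U_{y_j}$, so $d_i^{y_j}(X,X_n)\ge\rho_{y_j}/2$ for some $i$ by the triangle inequality, giving $\mathrm A_n\le 2\eta+\sum_{j\le J}\sum_i\Prob^*(d_i^{y_j}(X,X_n)\ge\rho_{y_j}/2)$, and the finite sum tends to $0$ by $1(a)$. For $\mathrm B_n$ no further work is needed: the elementary identity $\mathrm B_n=\mathrm A_n+(\Prob_{X_n}(O)-\Prob_X(O))$, together with $\mathrm A_n\to0$ and $1(b)$, forces $\mathrm B_n\to0$, establishing $(\star)$.

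Finally I would upgrade $(\star)$ to all of $B_b(E)$. Given a Borel set $A$ and $\delta>0$, outer regularity furnishes open $O\supseteq A$ and closed $D\subseteq A$ with $\Prob_X(O\setminus D)<2\delta$; sandwiching $\1_A$ between $\1_D$ and $\1_O$ and applying $(\star)$ to the open sets $O$ and $O\setminus D$ (the latter open as $D$ is closed) bounds $\limsup_n\E\abs{\1_A(X)-\1_A(X_n)}$ by a multiple of $\delta$, so it vanishes; passing from indicators to simple functions by the triangle inequality and then to arbitrary $f\in B_b(E)$ by uniform approximation yields statement $2$. I expect the main obstacle to be precisely the proof of $(\star)$ in the forward direction: unlike the metric setting of Theorem \ref{goodconv}, openness in a gauge space is witnessed only by finitely many, point-dependent pseudometrics, so the event $\{X\in O,\,X_n\notin O\}$ cannot be captured by a single distance. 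Reconciling this with the single-pseudometric control of $1(a)$ is exactly what forces the combined use of inner regularity (to replace $O$ by a closed, hence Lindel\"of, subset) and the Lindel\"of property (to reduce an uncountable cover to a finite one on which the doubling argument applies uniformly).
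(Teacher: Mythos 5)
Your proof is correct, and at the skeleton level it follows the only sensible strategy, which is also the paper's: reduce statement 2 to the convergence of indicators of open sets, and mediate between the pseudometric condition 1(a) and the open-set events via outer regularity plus a Lindel\"of covering argument. The paper, however, packages this modularly (Lemma \ref{abstractlemma} and Corollary \ref{convintop} handle the passage from open sets to all of $B_b(E)$; Lemma \ref{convinprob} handles the equivalence of 1(a) with $\Prob(X\in O,X_n\notin O)\to0$), and your execution of the two key steps differs in ways worth recording. First, for $1\Rightarrow2$ the paper covers $O$ by pseudoballs contained in $O$ and must then show $\Prob(X\in B^d_r(x),X_n\notin B^d_r(x))\to0$ for a \emph{single} ball; since on that event $d(X,X_n)$ has no deterministic lower bound, the paper resorts to extracting a.s.\ convergent subsequences from 1(a) and applying dominated convergence. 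Your doubling trick (cover a closed $C\subseteq O$ by half-radius neighborhoods $V_y$ whose doubled versions $U_y$ stay in $O$, so that $\{X\in V_y,X_n\notin O\}$ is directly contained in $\bigcup_i\{d_i(X,X_n)\geq\rho_y/2\}$) replaces that subsequence argument by a direct quantitative estimate in terms of $\Prob^*$, which is cleaner and sidesteps the measurability bookkeeping in the subsequence extraction; you also handle finite intersections of pseudoballs explicitly where the paper instead assumes WLOG that $\mathcal D$ is closed under finite maxima. Second, for $2\Rightarrow1(a)$ your disjointified Borel partition of diameter $2\varepsilon/3<\varepsilon$ with tail control is a correct variant of the paper's finite-ball-cover bound (indeed the paper's version uses balls of radius $\varepsilon$ against the threshold $\varepsilon$, where radius $\varepsilon/2$ is what the triangle inequality actually requires, so your bookkeeping is the more careful one); note your version invokes statement 2 for Borel indicators $\1_{A_k}$ rather than only for open sets, which is legitimate since 2 is the hypothesis in that direction. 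Finally, your sandwich $\1_D\leq\1_A\leq\1_O$ combined with 1(b) applied to the open set $O\setminus D$ achieves exactly what the paper's abstract symmetric-difference lemma does. What the paper's modular route buys is reusability: Corollary \ref{convintop} and Lemma \ref{convinprob} also carry the proof of Theorem \ref{goodconv}; what your route buys is a self-contained, quantitative argument with one less limiting procedure.
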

	\begin{Remark}
		To the best of our knowledge, this kind of convergence has not been studied systematically. In this work, it is a key element for proving the strong Feller property together with the approach of Maslowski and Seidler, see \cite{Maslowski2000}. In subsection \ref{Method}, we introduce the underlying strategy, which is applicable to a much more general setup.
		
		Additionally, in subsection \ref{examples} we give some examples to compare the different convergence concepts.
	\end{Remark}
	\begin{Remark}
		The continuity assumption in condition \eqref{delaydriftc} might look artificial. However, the following example illustrates that the strong Feller property is not given in general if one drops this assumption. Consider the SDDE (with $r=1$)
		\begin{align*}
		\dd{X^x}(t)&=\sgn\left(X^x(t-1)\right)\dd{t}+\dd{W}(t),\\
		X_0^x&=x
		\end{align*}
		with the convention
		$$
		\sgn x=
		\begin{cases}
		1 & \text{if }x\geq0,\\
		-1 & \text{if }x<0.
		\end{cases}
		$$
		This equation has for each initial value a unique strong solution, which can be constructed recursively. Now, set $y_n\equiv-1/n$, $n\in\N$, then one has
		\begin{align*}
		X^0(1)&=W(1)+1,\\
		X^{y_n}(1)&=W(1)-1-\frac{1}{n}.
		\end{align*}
		Thus, the strong Feller property is not given.
	\end{Remark}
	\begin{Remark}
		In appendix \ref{stricttop}, we consider the strict topology on the space of bounded, continuous functions as an example of a non-metrizable, locally convex space where all assumptions of Theorem \ref{MainTheorem1} are fulfilled. The strict topology is used for Markov processes with a state space that is not locally compact, see \cite{van2011markov}.
	\end{Remark}
	\section{Convergence of Random Variables in Topological Spaces and Examples}
	\subsection{Preliminaries}
	\label{Preliminaries}
	\begin{Definition}
		A topological space $X$ is called Lindel\"of if every open cover of $X$ has a countable subcover. $X$ is called hereditarily Lindel\"of if every open set of $X$ is Lindel\"of with respect to the subspace topology.
	\end{Definition}
	\begin{Definition}
		Let $E$ be some nonempty set and $\mathcal{D}$ be a nonempty set of pseudometrics on $E$. Then we call $(E,\mathcal{D})$ a gauge space and its topology shall be generated by
		$$\left\{B^d_r(x):x\in E,d\in\mathcal{D},r>0\right\}$$
		where
		$$B_r^d(x):=\left\{y\in E:d(x,y)<r\right\}, \ x\in E,d\in\mathcal{D},r>0.$$
	\end{Definition}
	\begin{Definition}
		A Borel probability measure $\Prob$ on a topological space $(E,\mathcal{T})$ is called outer regular if for all Borel sets $A\in\mathcal{B}(E)$ and $\varepsilon>0$ there exists some open $O\in\mathcal{T}$ with $A\subseteq O$ such that
		$$\Prob(O\setminus A)<\varepsilon.$$
	\end{Definition}
	\subsection{Convergence of Random Variables in Topological Spaces}
	At the beginning we prove the following abstract lemma.
	\begin{Lemma}
		\label{abstractlemma}
		Let $\left(\Omega,\mathcal{F},\Prob\right)$ be some probability space and $\left(E,\mathfrak{E}\right)$ be a measurable space. Furthermore, let $X_n:\Omega\to E$, $n\in\N$ be a sequence of measurable maps, $X:\Omega\to E$ be measurable and $\mathfrak{S}\subseteq\mathfrak{E}$ such that
		$$\forall A\in\mathfrak{E},\varepsilon>0 \ \exists S\in\mathfrak{S}:\Prob_X(A\Delta S)<\varepsilon$$
		where $\Delta$ denotes the symmetric difference of sets. Then the following statements are equivalent
		\begin{enumerate}
			\item 
			\begin{enumerate}
				\item$\lim\limits_{n\to\infty}\Prob\left(X\in S,X_n\notin S\right)=0$ for all $S\in\mathfrak{S}$,
				\item$\lim\limits_{n\to\infty}\Prob_{X_n}\left(A\right)=\Prob_X\left(A\right)$ for all $A\in\mathfrak{E}$.
			\end{enumerate}
			\item $\lim\limits_{n\to\infty}\E\abs{f(X)-f(X_n)}=0 \ \forall f\in B_b(E).$
		\end{enumerate}
	\end{Lemma}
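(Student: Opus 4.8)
The plan is to establish the two implications separately, with essentially all of the work concentrated in $(1)\Rightarrow(2)$. Throughout I would use that, for any $A\in\mathfrak{E}$, writing $p_n(A):=\Prob\left(X\in A,X_n\notin A\right)$ and $q_n(A):=\Prob\left(X\notin A,X_n\in A\right)$, one has the two elementary identities $\E\abs{\1_A(X)-\1_A(X_n)}=p_n(A)+q_n(A)$ and $\Prob_{X_n}(A)-\Prob_X(A)=q_n(A)-p_n(A)$. These reduce all of $(1)$ and the indicator case of $(2)$ to controlling the two one-sided quantities $p_n$ and $q_n$.

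The implication $(2)\Rightarrow(1)$ is immediate by testing against indicators: taking $f=\1_A$ for $A\in\mathfrak{E}$ gives $\abs{\Prob_{X_n}(A)-\Prob_X(A)}\leq\E\abs{\1_A(X)-\1_A(X_n)}\to0$, which is $(1b)$; and taking $f=\1_S$ for $S\in\mathfrak{S}$ gives $p_n(S)\leq\E\abs{\1_S(X)-\1_S(X_n)}\to0$, which is $(1a)$.

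For $(1)\Rightarrow(2)$ I would first reduce to indicator functions. Once $\E\abs{\1_A(X)-\1_A(X_n)}\to0$ is known for every $A\in\mathfrak{E}$, linearity extends it to finitely valued simple functions, and a uniform approximation of an arbitrary $f\in B_b(E)$ by simple functions together with a three-$\varepsilon$ estimate yields $(2)$ in full. So everything comes down to proving $p_n(A)\to0$ and $q_n(A)\to0$ for each fixed $A\in\mathfrak{E}$.

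The core step, and the main obstacle, is to upgrade hypothesis $(1a)$ from the class $\mathfrak{S}$ to all of $\mathfrak{E}$, i.e. to show $p_n(A)\to0$ for every $A\in\mathfrak{E}$. Given $\varepsilon>0$, I would use the density assumption to pick $S\in\mathfrak{S}$ with $\Prob_X(A\Delta S)<\varepsilon$ and then exploit the set inclusion $\left\{X\in A,X_n\notin A\right\}\subseteq\left\{X\in A\setminus S\right\}\cup\left\{X\in S,X_n\notin S\right\}\cup\left\{X_n\in S\setminus A\right\}$. The first event has probability at most $\Prob_X(A\Delta S)<\varepsilon$; the second tends to $0$ by $(1a)$ applied to $S$; and the third satisfies $\Prob\left(X_n\in S\setminus A\right)\to\Prob_X(S\setminus A)\leq\varepsilon$ by $(1b)$ applied to $S\setminus A\in\mathfrak{E}$. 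Taking $\limsup_n$ gives $\limsup_n p_n(A)\leq2\varepsilon$, and letting $\varepsilon\to0$ yields $p_n(A)\to0$. Finally, the identity $q_n(A)=p_n(A)+\left(\Prob_{X_n}(A)-\Prob_X(A)\right)$ combined with $(1b)$ forces $q_n(A)\to0$, completing the reduction to indicators. The delicate point is precisely that $(1a)$ is assumed only on $\mathfrak{S}$, so the density hypothesis must be used to transfer the one-sided control from $S$ to $A$, with $(1b)$ supplying the complementary bound on the error term $\Prob\left(X_n\in S\setminus A\right)$.
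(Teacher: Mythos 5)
Your proposal is correct and follows essentially the same route as the paper's proof: both reduce statement (2) to showing $\Prob\left(X\in A,X_n\notin A\right)\to0$ for every $A\in\mathfrak{E}$, and both transfer the one-sided control from $\mathfrak{S}$ to $\mathfrak{E}$ via the same three-set decomposition, using the density hypothesis to bound the $\Prob_X$-error, (1a) on the approximating set $S$, and (1b) to control the error term involving $X_n$ (the paper bounds $\Prob_{X_n}(A\Delta S)$, you bound $\Prob\left(X_n\in S\setminus A\right)$, which is the same idea). Your explicit treatment of the reverse term $q_n(A)$ via the identity $q_n(A)=p_n(A)+\left(\Prob_{X_n}(A)-\Prob_X(A)\right)$ and the reduction to simple functions merely spell out steps the paper leaves implicit.
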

	\begin{proof}
		Implication $2.\Rightarrow1.$ is trivial. Hence, we only show implication $1.\Rightarrow2.$ Since $f$ is bounded, it suffices to prove for all $A\in\mathfrak{E}$
		$$\lim\limits_{n\to\infty}\Prob\left(X\in A, X_n\notin A\right)=0.$$
		Let $\varepsilon>0$. By assumption, there exists some $S\in\mathfrak{S}$ such that
		$$\Prob_X(A\Delta S)<\varepsilon.$$
		Now, choose $n_0\in\N$ large enough such that
		$$\Prob_{X_n}\left(A\Delta S\right)<\varepsilon$$
		for all $n\geq n_0$. Then one has for all $n\geq n_0$
		$$\Prob\left(X\in A,X_n\notin A\right)\leq\Prob\left(X\in S,X_n\notin S\right)+2\varepsilon.$$
		By assumption, one obtains
		$$\lim\limits_{n\to\infty}\Prob\left(X\in A, X_n\notin A\right)\leq2\varepsilon.$$
		Since $\varepsilon>0$ was chosen arbitrarily, the proof is complete.
	\end{proof}
	\begin{Remark}
		Let $(E,\mathcal{T})$ be a topological space and $(\Prob_n)_{n\in\N}$ be a sequence of probability measures that converges pointwise on $\mathcal{T}$ to some outer regular probability measure, i.e.
		$$\lim\limits_{n\to\infty}\Prob_n(O)=\Prob(O) \ \forall O\in\mathcal{T}.$$
		Then one has pointwise convergence on all Borel sets:
		$$\lim\limits_{n\to\infty}\Prob_n(A)=\Prob(A) \ \forall A\in\mathcal{B}(E).$$
		This can be seen as follows. Let $A\in\mathcal{B}(E)$ and $\varepsilon>0$. Since $\Prob$ was assumed to be outer regular, one can find an open set $O\supseteq A$ and a closed set $C\subseteq A$ such that
		$$\Prob(O\setminus C)<\varepsilon.$$
		By assumption, it follows
		$$\Prob(C)=\lim\limits_{n\to\infty}\Prob_n(C)\leq\liminf\limits_{n\to\infty}\Prob_n(A)\leq\limsup\limits_{n\to\infty}\Prob_n(A)\leq\lim\limits_{n\to\infty}\Prob_n(O)=\Prob(O)$$
		and consequently,
		$$\Prob(A)-\varepsilon\leq\liminf\limits_{n\to\infty}\Prob_n(A)\leq\limsup\limits_{n\to\infty}\Prob_n(A)\leq\Prob(A)+\varepsilon \ \forall\varepsilon>0.$$
	\end{Remark}
	Now, one can apply the previous lemma to the topological context, which reads as follows.
	\begin{Corollary}
		\label{convintop}
		Let $\left(\Omega,\mathcal{F},\Prob\right)$ be some probability space and $\left(E,\mathcal{T}\right)$ be a topological space. Furthermore, let $X_n:\Omega\to E$, $n\in\N$ be a sequence of measurable maps and $X:\Omega\to E$ be measurable such that the measure $\Prob_X$ is outer regular. Then the following statements are equivalent
		\begin{enumerate}
			\item for all open $O\in\mathcal{T}$ holds
			\begin{enumerate}
				\item$\lim\limits_{n\to\infty}\Prob\left(X\in O,X_n\notin O\right)=0$,
				\item$\lim\limits_{n\to\infty}\Prob_{X_n}\left(O\right)=\Prob_X\left(O\right)$.
			\end{enumerate}
			\item $\lim\limits_{n\to\infty}\E\abs{f(X)-f(X_n)}=0 \ \forall f\in B_b(E).$
		\end{enumerate}
	\end{Corollary}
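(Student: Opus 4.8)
The plan is to reduce Corollary \ref{convintop} to the abstract Lemma \ref{abstractlemma} by choosing the approximating family correctly. I would take $\mathfrak{E}:=\mathcal{B}(E)$, the Borel $\sigma$-algebra, and $\mathfrak{S}:=\mathcal{T}$, the collection of open sets. The approximation hypothesis of Lemma \ref{abstractlemma} — that every $A\in\mathfrak{E}$ admits some $S\in\mathfrak{S}$ with $\Prob_X(A\Delta S)<\varepsilon$ — then follows directly from the assumed outer regularity of $\Prob_X$: given $A\in\mathcal{B}(E)$ and $\varepsilon>0$, outer regularity provides an open $O\supseteq A$ with $\Prob_X(O\setminus A)<\varepsilon$, and since $A\subseteq O$ one has $A\Delta O=O\setminus A$, so $\Prob_X(A\Delta O)<\varepsilon$. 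With these identifications, condition $1.(a)$ of the corollary is verbatim condition $1.(a)$ of the lemma, now ranging over $S\in\mathfrak{S}=\mathcal{T}$, and statement $2.$ of the corollary coincides with statement $2.$ of the lemma.

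The only discrepancy between the two formulations is that condition $1.(b)$ of the corollary asks for convergence $\Prob_{X_n}(O)\to\Prob_X(O)$ merely on open sets, whereas hypothesis $1.(b)$ of the lemma demands it on all Borel sets. Bridging this gap is the crux of the argument, and it is exactly what the remark preceding the corollary accomplishes: if $\Prob_{X_n}\to\Prob_X$ pointwise on the open sets and $\Prob_X$ is outer regular, then $\Prob_{X_n}\to\Prob_X$ on all of $\mathcal{B}(E)$. Hence, for the implication $1.\Rightarrow 2.$, I would take the corollary's $1.(a)$ and $1.(b)$, invoke that remark to upgrade $1.(b)$ to convergence on every Borel set, conclude that hypothesis $1.$ of Lemma \ref{abstractlemma} is satisfied, and read off its conclusion $2.$, which is the assertion of the corollary.

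For the reverse implication $2.\Rightarrow 1.$ no such upgrade is needed. Since statement $2.$ of the corollary is identical to statement $2.$ of the lemma, Lemma \ref{abstractlemma} yields its conclusion $1.$: condition $1.(a)$ for every open $O$ — giving the corollary's $1.(a)$ — together with $\Prob_{X_n}(A)\to\Prob_X(A)$ for every Borel set $A$; restricting the latter to open sets gives the corollary's $1.(b)$. The point demanding care is therefore solely the asymmetry in $1.(b)$, open sets versus all Borel sets, which is why outer regularity of $\Prob_X$ is indispensable and enters twice: once to verify the approximation hypothesis of the lemma, and once through the preceding remark (which implicitly also uses the inner regularity by closed sets obtained from outer regularity applied to complements) to promote open-set convergence to Borel convergence.
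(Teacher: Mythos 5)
Your proposal is correct and is precisely the argument the paper intends: the corollary is derived from Lemma \ref{abstractlemma} with $\mathfrak{E}=\mathcal{B}(E)$ and $\mathfrak{S}=\mathcal{T}$, the approximation hypothesis $\Prob_X(A\Delta O)<\varepsilon$ supplied by outer regularity, and the preceding remark used to upgrade condition $1.(b)$ from open sets to all Borel sets. Nothing is missing; your parenthetical observation that the remark extracts inner approximation by closed sets from outer regularity of complements is also exactly how the paper's remark proceeds.
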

	\begin{Remark}
		For a separable metric space $(E,d)$, it holds
		$$\mathcal{B}(E\times E)=\mathcal{B}(E)\otimes\mathcal{B}(E).$$
		If one drops the separability assumption, this may fail. See for example $E=2^\R$ equipped with the discrete topology. Consequently, for two measurable random variables $X,Y:\Omega\to E$, the map
		$$\Omega\ni\omega\mapsto d(X(\omega),Y(\omega))$$
		could be not measurable. To overcome this problem, one can use the outer measure
		$$\Prob^*\left(A\right):=\inf\left\{\Prob(B):A\subset B,\ B\text{ measurable}\right\}$$
		to evaluate
		$$\Prob^*\left(d(X,Y)\geq\varepsilon\right), \ \varepsilon>0.$$
		This provides a natural definition of convergence in probability for non-separable metric spaces. For a discussion in detail, see \cite{Empirical}.
	\end{Remark}
	\begin{Lemma}
		\label{convinprob}
		Let $\left(\Omega,\mathcal{F},\Prob\right)$ be some probability space and $\left(E,\mathcal{D}\right)$ be a gauge space. Furthermore, let $X,X_n:\Omega\to E$, $n\in\N$ be measurable maps and assume that $\Prob_X$ is outer regular and there exists some null set $N\subset\Omega$ such that $X(\Omega\setminus N)$ is Lindel\"of. Then the following statements are equivalent
		\begin{enumerate}
			\item $\lim\limits_{n\to\infty}\Prob^*\left(d\left(X,X_n\right)\geq\varepsilon\right)=0 \ \forall\varepsilon>0,d\in \mathcal{D}$,
			\item $\lim\limits_{n\to\infty}\Prob\left(X\in O,X_n\notin O\right)=0$ for all open $O\subset E$. 
		\end{enumerate}
	\end{Lemma}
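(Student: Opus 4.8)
The plan is to prove both implications directly, using the two structural hypotheses exactly once each: the Lindel\"of property of $Y:=X(\Omega\setminus N)$ for both directions, and outer regularity of $\Prob_X$ only for the harder implication $1.\Rightarrow 2.$ Throughout I would work modulo the null set $N$, so that $X\in Y$ almost surely and all probabilities may be computed on $\Omega\setminus N$. The recurring technical point, which I regard as the main obstacle, is that the sets $\{d(X,X_n)\geq\varepsilon\}$ need not be measurable; I would handle this by always bounding a \emph{measurable} event of the form $\{X\in O,X_n\notin O\}$ by a \emph{finite} union of such sets and then invoking the countable subadditivity of $\Prob^*$. Reducing an a priori uncountable cover first to a countable and then to a finite subfamily is precisely where the topological assumptions enter.

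For $2.\Rightarrow 1.$, fix $d\in\mathcal{D}$, $\varepsilon>0$ and $\delta>0$. The family $\{B^d_{\varepsilon/2}(x):x\in Y\}$ is an open cover of the Lindel\"of space $Y$, so it admits a countable subcover $\{B^d_{\varepsilon/2}(x_j)\}_{j\in\N}$. If $X(\omega)\in B^d_{\varepsilon/2}(x_j)$ while $d(X(\omega),X_n(\omega))\geq\varepsilon$, the triangle inequality forces $X_n(\omega)\notin B^d_{\varepsilon/2}(x_j)$. Since these balls cover $Y$ and $X\in Y$ almost surely, I may choose $J$ so large that $\Prob\big(X\notin\bigcup_{j\leq J}B^d_{\varepsilon/2}(x_j)\big)<\delta$, and then, writing $O_j:=B^d_{\varepsilon/2}(x_j)$,
$$\{d(X,X_n)\geq\varepsilon\}\cap(\Omega\setminus N)\subseteq\Big\{X\notin\bigcup_{j\leq J}O_j\Big\}\cup\bigcup_{j\leq J}\{X\in O_j,X_n\notin O_j\}.$$
The right-hand side is measurable, so $\Prob^*(d(X,X_n)\geq\varepsilon)\leq\delta+\sum_{j\leq J}\Prob(X\in O_j,X_n\notin O_j)$; letting $n\to\infty$ and then $\delta\to0$ yields statement 1.

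The implication $1.\Rightarrow 2.$ is the substantial one. Fix an open set $O$ and $\delta>0$. Here I would first use outer regularity of $\Prob_X$ to choose a closed set $C\subseteq O$ with $\Prob_X(O\setminus C)<\delta$; the purpose of this step is that $C\cap Y$, being closed in the Lindel\"of space $Y$, is again Lindel\"of, whereas the open trace $O\cap Y$ need not be. For every $x\in C\cap Y\subseteq O$, openness of $O$ yields a basic neighbourhood $V_x=\bigcap_{i=1}^{k_x}B^{d_i}_{r_i}(x)\subseteq O$; I then pass to the halved neighbourhood $U_x=\bigcap_{i=1}^{k_x}B^{d_i}_{r_i/2}(x)$, which still contains $x$. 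The halving is what makes the triangle inequality bite: if $X(\omega)\in U_x$ and $X_n(\omega)\notin O\supseteq V_x$, then some coordinate pseudometric satisfies $d_i(X(\omega),X_n(\omega))\geq r_i/2$.

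Applying the Lindel\"of property of $C\cap Y$ to the cover $\{U_x\}_{x\in C\cap Y}$ produces a countable subcover $\{U_{x_j}\}_{j\in\N}$ involving only countably many pseudometrics, after which I truncate exactly as before: choosing $J$ with $\Prob\big(X\in C,\,X\notin\bigcup_{j\leq J}U_{x_j}\big)<\delta$, I obtain
$$\{X\in C,X_n\notin O\}\subseteq\Big\{X\in C,X\notin\bigcup_{j\leq J}U_{x_j}\Big\}\cup\bigcup_{j\leq J}\bigcup_{i=1}^{k_{x_j}}\{d_i(X,X_n)\geq r_i/2\},$$
whence $\Prob^*(X\in C,X_n\notin O)\leq\delta+\sum_{j\leq J}\sum_i\Prob^*(d_i(X,X_n)\geq r_i/2)$. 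Combining this with $\Prob(X\in O\setminus C)<\delta$ through the inclusion $\{X\in O,X_n\notin O\}\subseteq\{X\in O\setminus C\}\cup\{X\in C,X_n\notin O\}$, sending $n\to\infty$ using statement 1 and then $\delta\to0$, I conclude $\Prob(X\in O,X_n\notin O)\to0$. I expect the only genuine subtleties to be the careful bookkeeping of measurability versus outer measure in these inclusions, and the outer-regularity-to-a-closed-set manoeuvre that repairs the failure of open subspaces of Lindel\"of spaces to be Lindel\"of.
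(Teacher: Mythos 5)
Your proof is correct, and it shares the paper's overall architecture (outer regularity plus the Lindel\"of property to reduce to finitely many basic open sets, then the triangle inequality), but the decisive mechanism in $1.\Rightarrow 2.$ is genuinely different. The paper first normalises $\mathcal{D}$ to be closed under pairwise maxima (so that single balls form a base), reduces via its finite subcover to showing $\lim_{n\to\infty}\Prob\left(X\in B_r^d(x),X_n\notin B_r^d(x)\right)=0$ for a \emph{single} ball, and proves that qualitatively: statement 1 yields along every subsequence a further subsequence converging to $X$ almost surely with respect to $d$, and dominated convergence finishes. You never pass to subsequences: covering by the \emph{halved} basic neighbourhoods $U_x=\bigcap_{i}B^{d_i}_{r_i/2}(x)$ lets you bound $\{X\in C,X_n\notin O\}$ directly by finitely many sets $\{d_i(X,X_n)\geq r_i/2\}$ and feed statement 1 in verbatim through subadditivity of $\Prob^*$; this is more quantitative, and working with finite intersections of balls also removes the need for the paper's ``WLOG $\max(d_1,d_2)\in\mathcal{D}$'' step. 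Your use of outer regularity (closed $C\subseteq O$ with $\Prob_X(O\setminus C)<\delta$, then $C\cap Y$ Lindel\"of as a closed subspace of $Y$) is the dual formulation of the paper's (open $V\supseteq E\setminus O$ with $\Prob_X(V\cap O)<\delta$, then covering all of $E$), so that part is equivalent bookkeeping. Finally, in $2.\Rightarrow 1.$ your radius-$\varepsilon/2$ balls are what the triangle inequality actually requires: the paper's text covers with radius-$\varepsilon$ balls, for which the asserted implication ``$X\in B_\varepsilon^d(x_i)$ and $d(X,X_n)\geq\varepsilon$ imply $X_n\notin B_\varepsilon^d(x_i)$'' fails (take $d(x_i,X)$ close to $\varepsilon$), so your halving silently repairs a slip in the paper's own proof.
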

	\begin{proof}
		$1.\Rightarrow2.:$ Without loss of generality, one can assume
		$$\max(d_1,d_2)\in\mathcal{D} \ \forall d_1,d_2\in\mathcal{D}.$$
		Let $O\subseteq E$ be open and $\delta>0$. The probability measure $\Prob_X$ was assumed to be outer regular. Thus, there exists an open set $V$ with $E\setminus O\subseteq V$ and
		$$\Prob_X\left(V\cap O\right)<\delta.$$
		Additionally, for every $x\in O$ there exist some pseudometric $d_x\in\mathcal{D}$ and $r_x>0$ such that
		$$O=\bigcup_{x\in O}B^{d_x}_{r_x}(x).$$
		Then it follows
		$$E=\bigcup_{x\in O}B^{d_x}_{r_x}(x)\cup V,$$
		and since $X(\Omega\setminus N)$ is Lindel\"of, one can find $d_i\in\mathcal{D}$, $r_i>0$, $x_i\in E$, $i\in\N$ such that
		$$\Prob_X\left(O\setminus\bigcup_{i=1}^m B_{r_i}^{d_i}(x_i)\right)<\delta.$$
		Then it holds
		\begin{align*}
			\Prob\left(X\in O,X_n\notin O\right)&\leq\sum_{i=1}^{m}\Prob\left(X\in B_{r_i}^{d_i}(x_i),X_n\notin O\right)+\delta\\
			&\leq\sum_{i=1}^{m}\Prob\left(X\in B_{r_i}^{d_i}(x_i),X_n\notin B_{r_i}^{d_i}(x_i)\right)+\delta
		\end{align*}
		Hence, it suffices to show
		$$\lim\limits_{n\to\infty}\Prob\left(X\in B_r^d(x),X_n\notin B_r^d(x)\right)=0 \ \forall d\in\mathcal{D},x\in E, r>0.$$
		Let $d\in\mathcal{D}$, $x\in E$ and $r>0$. By assumption, one has
		$$\lim\limits_{n\to\infty}\Prob^*\left(d\left(X,X_n\right)\geq\varepsilon\right)=0 \ \forall\varepsilon>0.$$
		Now, we show that every subsequence of $(X_n)_{n\in\N}$ has a subsequence that converges pointwise to $X$ with respect to $d$ almost surely. This can be seen as follows. Let $(X_{n_k})_{k\in\N}$ be a subsequence of $(X_n)_{n\in\N}$. Now, for each $l\in\N$ choose $k_l\in\N$ large enough such that $k_l\geq k_{l-1}$ and
		$$\Prob^*\left(d\left(X,X_{k_l}\right)\geq\frac{1}{l}\right)\leq\frac{1}{l^2}.$$
		This is possible by the equality above. Then one has by the properties of an outer measure
		\begin{align*}
			&\Prob^*\bigg(X_{k_l}\nrightarrow X\bigg)\\
			\leq&\Prob^*\left(\bigcup_{m\in\N}\bigcap_{l_0\in\N}\bigcup_{l\geq l_0}\left\{d\left(X, X_{n_{k_l}}\right)\geq\frac{1}{m}\right\}\right)\\
			\leq&\sum_{m\in\N}\Prob^*\left(\bigcap_{l_0\in\N}\bigcup_{l\geq l_0}\left\{d\left(X, X_{n_{k_l}}\right)\geq\frac{1}{m}\right\}\right).
		\end{align*}
		Furthermore, one has
		\begin{align*}
			&\Prob^*\left(\bigcap_{l_0\in\N}\bigcup_{l\geq l_0}\left\{d\left(X, X_{n_{k_l}}\right)\geq\frac{1}{m}\right\}\right)\\
			=&\Prob^*\left(\bigcap_{l_0\in\N}\bigcup_{l\geq l_0}\left\{d\left(X, X_{n_{k_l}}\right)\geq\frac{1}{l}\right\}\right)\\
			\leq&\lim\limits_{l_0\to\infty}\sum_{l=l_0}^{\infty}\Prob^*\left(d\left(X,X_{k_l}\right)\geq\frac{1}{l}\right)\\
			\leq&\lim\limits_{l_0\to\infty}\sum_{l=l_0}^{\infty}\frac{1}{l^2}\\
			=&0.
		\end{align*}
		Consequently, every subsequence of $(X_n)_{n\in\N}$ has a subsequence that converges pointwise to $X$ with respect to $d$ almost surely. 
		Finally, it follows
		$$\lim\limits_{n\to\infty}\Prob\left(X\in B_r^d(x),X_n\notin B_r^d(x)\right)=0.$$
		$2.\Rightarrow1.:$ Let $\varepsilon$, $\delta>0$ and $d\in\mathcal{D}$. Since $X(\Omega\setminus N)$ was assumed to be Lindel\"of, one can choose $m\in\N$ large enough such that
		$$\Prob_X\left(\bigcup_{i=1}^m B_\varepsilon^d(x_i)\right)>1-\delta$$
		with suitable $x_1,\dots,x_m\in E$. Then one has
		\begin{align*}
			&\Prob^*\left(d\left(X,X_n\right)\geq\varepsilon\right)\\
			\leq&\Prob^*\left(X\in\bigcup_{i=1}^mB_\varepsilon^d(x),d(X,X_n)\geq\varepsilon\right)+\Prob^*\left(X\notin\bigcup_{i=1}^mB_\varepsilon^d(x_i),d(X,X_n)\geq\varepsilon\right)\\
			\leq&\sum_{i=1}^{m}\Prob^*\left(X\in B_\varepsilon^d(x_i),d\left(X,X_n\right)\geq\varepsilon\right)+\Prob\left(X\notin\bigcup_{i=1}^mB_\varepsilon^d(x_i)\right)\\
			\leq&\sum_{i=1}^{m}\Prob\left(X\in B_\varepsilon^d(x_i),X_n\notin B_\varepsilon^d(x_i)\right)+\delta
		\end{align*}
		By assumption, it follows
		$$\lim\limits_{n\to\infty}\Prob^*\left(d\left(X,X_n\right)\geq\varepsilon\right)\leq\delta$$
		for all $\delta>0$, which completes the proof.
	\end{proof}
	\begin{Definition}
		In a topological space a $G_\delta$-set is an intersection of countably many open sets. A topological space is called a $G_\delta$-space if every closed set is a $G_\delta$-set.
	\end{Definition}
	\begin{Lemma}
		\label{outerreg}
		Let $(E,\mathcal{T})$ be a $G_\delta$-space. Then every Borel probability measure $\Prob$ on $E$ is outer regular.
	\end{Lemma}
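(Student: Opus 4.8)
The plan is to run the standard ``good sets'' argument, but using a two-sided regularity condition rather than outer regularity alone, since the two-sided notion is stable under complementation (outer regularity by itself is not, as complementing trades outer open approximation for inner closed approximation). Concretely, I would let $\mathcal{A}$ denote the collection of all $A\in\mathcal{B}(E)$ with the property that for every $\varepsilon>0$ there exist a closed set $C$ and an open set $O$ with $C\subseteq A\subseteq O$ and $\Prob(O\setminus C)<\varepsilon$. Outer regularity is then immediate from $\mathcal{A}=\mathcal{B}(E)$, because the open set $O$ witnessing $A\in\mathcal{A}$ already satisfies $\Prob(O\setminus A)\leq\Prob(O\setminus C)<\varepsilon$. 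So it suffices to show that $\mathcal{A}$ is a $\sigma$-algebra containing all closed sets.

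First I would verify that $\mathcal{A}$ is a $\sigma$-algebra. That $E\in\mathcal{A}$ is trivial, and closure under complements is symmetric: if $C\subseteq A\subseteq O$ with $\Prob(O\setminus C)<\varepsilon$, then $O^c\subseteq A^c\subseteq C^c$ with $O^c$ closed, $C^c$ open, and $\Prob(C^c\setminus O^c)=\Prob(O\setminus C)<\varepsilon$. For a countable union $A=\bigcup_n A_n$ I would pick for each $n$ a closed $C_n\subseteq A_n$ and open $O_n\supseteq A_n$ with $\Prob(O_n\setminus C_n)<\varepsilon/2^{n+1}$, set $O:=\bigcup_n O_n$ (open), and approximate $\bigcup_n C_n$ from inside by the finite union $C:=\bigcup_{n\leq N}C_n$ (closed), choosing $N$ large enough that $\Prob(\bigcup_n C_n\setminus C)<\varepsilon/2$ by continuity from below of the finite measure $\Prob$. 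Splitting $O\setminus C$ into the disjoint pieces $O\setminus\bigcup_n C_n$ and $\bigcup_n C_n\setminus C$, and using the inclusion $O\setminus\bigcup_n C_n\subseteq\bigcup_n(O_n\setminus C_n)$, then yields $\Prob(O\setminus C)<\varepsilon$, so $A\in\mathcal{A}$.

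The step that genuinely uses the hypothesis is showing that every closed set lies in $\mathcal{A}$. Given a closed set $F$, I would take $C:=F$ for the inner approximation; for the outer approximation I would invoke the $G_\delta$-property to write $F=\bigcap_n O_n$ with each $O_n$ open, replace $O_n$ by $\bigcap_{k\leq n}O_k$ to make the sequence decreasing, and then use continuity from above of $\Prob$ to conclude $\Prob(O_n\setminus F)\to0$, so that $O:=O_n$ works for $n$ large. Combining the three steps, $\mathcal{A}$ is a $\sigma$-algebra containing all closed sets, hence $\mathcal{A}=\mathcal{B}(E)$, which gives the claim.

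I expect the main (though modest) obstacle to be the countable-union step, precisely because a countable union of closed sets need not be closed; the finiteness of $\Prob$ is exactly what lets me truncate to a finite union without losing more than $\varepsilon/2$ of mass. The $G_\delta$-assumption enters only to produce the outer open approximation of closed sets, and everything else is soft measure theory.
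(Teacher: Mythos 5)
Your proof is correct and takes essentially the same route as the paper's: the paper's class $\mathfrak{A}$ of Borel sets $A$ with both $A$ and $E\setminus A$ outer regular is exactly your two-sided class $\mathcal{A}$, and both arguments reduce to showing this class is a $\sigma$-algebra containing every closed set, with the $G_\delta$ hypothesis plus continuity from above of the finite measure supplying the outer open approximation of closed sets. You simply fill in the $\sigma$-algebra verification and the finite-truncation step for countable unions, which the paper dismisses as standard with a citation to Dudley.
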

	\begin{proof}
		The proof is standard and can be found for polish spaces in \cite[p. 224-225]{dudley}. Consider the set $\mathfrak{A}$
		$$\mathfrak{A}:=\left\{A\in\mathcal{B}(E):A\text{ and }E\setminus A\text{ outer regular}\right\}.$$
		Clearly, $\mathfrak{A}$ is a $\sigma$-algebra. Therefore, it is sufficient to show that every closed $C\subseteq E$ is contained in $\mathfrak{A}$. By assumption, each closed set is a countable intersection of open sets, which completes the proof.
	\end{proof}
	\begin{Remark}
		Examples for $G_\delta$-spaces are metric spaces and hereditarily Lindel\"of gauge spaces.
	\end{Remark}
	\begin{proof}[Proof of Theorem \ref{goodconv}]
		By Corollary \ref{convintop} and Lemma \ref{convinprob}, it is sufficient to show
		$$\lim\limits_{n\to\infty}\Prob\left(X\in O,X_n\notin O\right)=0 \ \forall\text{ open }O\subset E.$$
		Furthermore,
		$$\lim\limits_{n\to\infty}\Prob^*\left(d(X,X_n)\leq\varepsilon\right)=0 \ \forall\varepsilon>0$$
		implies that every subsequence of $(X_n)_{n\in\N}$ has a subsequence that converges almost surely. Thus, it follows
		$$\lim\limits_{n\to\infty}\1_{O}(X)\1_{E\setminus O}(X_n)=0\text{ in probability},$$
		which proofs the implication. Under the additional assumption, one can apply Lemma \ref{convinprob} again to conclude the reverse direction.
	\end{proof}
	\begin{proof}[Proof of Theorem \ref{MainTheorem1}]
		This is a consequence of Corollary \ref{convintop}, Lemma \ref{convinprob} and Lemma \ref{outerreg}.
	\end{proof}
	\subsection{Examples}
	\label{examples}
	In this subsection we want to give some examples for the reader's convenience. At first, two rather trivial examples are given to discuss the equivalence from Theorem \ref{goodconv}. Then we consider two one-dimensional SD(D)Es to investigate the difference between the strong Feller property and its ``improved'' version.
	\begin{enumerate} 
		\item In this simple example we show that pointwise convergence of random variables does not imply the convergence type discussed in this section. However, if one adds Gaussian terms, the convergence follows. Consider a sequence $(x_n)_{n\in\N}$ in $\R^d$ that converges to some $x_0\in\R^d$ with $x_i\neq x_0$ for all $i\in\N$. Then the deterministic random variables
		$$X_n\equiv x_n$$
		converge pointwise to $x_0$ but their laws $\delta_{x_n}$ do not converge pointwise to $\delta_{x_0}$. In particular, it holds
		$$\E\abs{\1_{\{x_0\}}(X)-\1_{\{x_0\}}(X_n)}=1.$$
		On the other hand, let $N$ be a standard Gaussian random variable and consider instead the sequence
		$$Y_n:=x_n+N.$$
		Then one has
		$$\lim\limits_{n\to\infty}\E\abs{f(Y_n)-f(Y)}=0 \ \forall f\in B_b(\R).$$
		\item This example is rather trivial. It shows that pointwise convergence of measures does not imply the convergence type discussed in this section. Let $X\sim\mathcal{N}(0,1)$ be a standard Gaussian random variable and define
		$$X_n:=-X\sim\mathcal{N}(0,1).$$
		It holds
		$$\Prob(X\in A)=\Prob(X_n\in A) \ \forall n\in\N, A\in\mathcal{B}(\R).$$
		Obviously, $X_n$ does not converge to $X$ in probability and it holds
		$$\E\abs{\1_{\R_{\geq0}}(X)-\1_{\R_{\geq0}}(X_n)}=1 \ \forall n\in\N.$$
		\item In this example we show that pathwise uniqueness and the strong Feller property already implies almost sure convergence for solutions of one-dimensional non-delay equations. Consequently, in this setting, there is no difference between the strong Feller property and it's ``improved'' version. Now, assume we have a one-dimensional SDE that has a unique strong solution for each real initial value
		\begin{align*}
		\dd{X^x}(t)&=b(t,X^x(t))\dd{t}+\sigma(t,X^x(t))\dd{W}(t),\\
		X^x(0)&=x\in\R
		\end{align*}
		where $W$ is a $d$-dimensional Brownian motion on some probability space, and $b:\R_{\geq0}\times \R\to\R$ and $\sigma:\R_{\geq0}\times\R\to\R^{1,d}$ are measurable. Assume furthermore that $X$ has the Feller property, i.e.
		$$\lim\limits_{y\to x}\E f(X^y(t))=\E f(X^x(t)) \ \forall f\in C_b(\R).$$
		Then for every sequence $(x_n)_{n\in\N}\subset\R$ with $x_n\to x$ and $t>0$, one has
		$$X^{x_n}(t)\to X^x(t)\text{ a.s.}$$
		In particular, by Theorem \ref{goodconv}, the strong Feller property is equivalent to
		$$\lim\limits_{n\to\infty}\E\abs{f(X^{x_n}(t))-f(X^x(t))}=0 \ \forall f\in B_b(\R).$$
		This can be seen as follows: by uniqueness, one has monotonicity for the solutions, i.e. for all $x\leq y$ holds
		$$X^x(t)\leq X^y(t) \ \forall t\geq0\text{ a.s.}$$
		On the other hand, for each sequence $(x_n)_{n\in\N}$ with $x_n\downarrow x$, the following limit exists
		$$\tilde{X}(t):=\lim\limits_{x_n\downarrow x}X^{x_n}(t) \ \forall t\geq0\text{ a.s.}$$
		since all $X^{x_n}$ are bounded from below by $X^x$. Thus, one has for all $t\geq0$, $f\in C_b(\R)$
		$$\E f(X^x(t))=\lim\limits_{x_n\downarrow x}\E f(X^{x_n}(t))=\E f(\tilde{X}(t)).$$
		Additionally, it holds
		$$\tilde{X}(t)\geq X^x(t) \ \forall t\geq0\text{ a.s.}$$
		Thus, $\tilde{X}$ is a modification of $X^x$.
		\item
		In this example we show that the implication from the previous example is not given if the dispersion coefficient depends on the past. Especially, condition \eqref{sigmac} is not fulfilled. For the sake of overview, we embed real constants in $\Co$ naturally. 
		Now, let us consider the one-dimensional SDDE (with $r=1$)
		\begin{align*}
		\dd{X^x}(t)&=\sgn(X^x(t-1))\dd{W}(t)\\
		X_0&=x\in\Co
		\end{align*}
		where we use the convention
		$$\sgn x=
		\begin{cases}
		1 &\text{ if }x\geq0,\\
		-1 &\text{ if }x<0.
		\end{cases}
		$$
		This SDDE can be solved uniquely by constructing the solution recursively. By Levy's characterization, each solution $X^x$ is distributed on $\R_{\geq0}$ like a shifted Brownian motion, in particular
		$$X_t^x\sim W_t+x(0) \ \forall t>1.$$
		It is not difficult to show that one has for $t>1$
		$$\lim\limits_{y\to x}\E f(X_t^y)=\lim\limits_{y\to x}\E f(W_t+y(0))=\E f(W_t+x(0))=\E f(X_t^x) \ \forall f\in B_b(\R),$$
		see for example \cite{es-sarhir2009}. So, $X$ has the strong Feller property with respect to the state space $\Co$.
		On the other hand, one has for all $y\geq0,x<0$
		$$\norm{X^y_2-X^x_2}_\infty\geq\abs{X^y(1)-X^x(1)}=\abs{2W(1)+y-x}\text{ a.s.}$$
		Therefore, convergence in probability is not given. In particular, the strong Feller property does not coincide with it's ``improved'' version. 
	\end{enumerate}
	\section{Application to Stochastic Delay Differential Equations}
	\subsection{Introduction to the Method}
	\label{Method}
	In this subsection, we want to illustrate the strategy to prove Theorem \ref{StrongFeller}, which is based on an approach of Maslowski and Seidler, see \cite{Maslowski2000}, and the convergence discussed in section 2. In order, we make use of a toy example with state space $\R^d$. Consider the equation
	\begin{align*}
		\dd{X^x}(t)&=b(t,X^x(t))\dd{t}+\sigma\dd{W}(t),\\
		X^x(0)&=x\in\R^d
	\end{align*}
	where $W$ is some d-dimensional Brownian motion, $\sigma\in\R^{d\times d}$ is invertible, $b:\R_{\geq0}\times\R^d\to\R^d$ is measurable and for every $T>0$ there exists some $C_T\in\R$ with
	\begin{align*}
		\left\langle b(t,u)-b(t,v),u-v\right\rangle&\leq C_T\abs{u-v}^2 \ \forall u,v\in\R^d,t\in[0,T],\\
		\abs{b(t,u)}&\leq C_T \ \forall u\in\R^d,t\in[0,T].
	\end{align*}
	Additionally, we consider its drift-free equation
	\begin{align*}
		\dd{M^x}(t)&=\sigma\dd{W}(t),\\
		M^x(0)&=x\in\R^d,
	\end{align*}
	which is trivial in that case for the sake of simplicity. Observe that both equations have a unique strong solution and the drift-free one depends continuously on the initial value in the sense that
	$$\lim\limits_{n\to\infty}M^{x_n}(t)=M^x(t)\text{ in probability}$$
	for each sequence $(x_n)_{n\in\N}\subset\R^d$ converging to $x$. Now, one can observe that $M$ has the strong Feller property, i.e.
	$$\lim\limits_{y\to x}\E f(M^y(t))=\E f(M^x(t)) \ \forall f\in B_b(\R^d).$$
	Also, $\Prob_{X^x}$ has a Girsanov density with respect to $\Prob_{M^x}$, i.e.
	$$\E f(X^x(t))=\E \left[D^x(t)f(M^x(t))\right] \ \forall f\in B_b(\R^d)$$
	with
	$$D^x(t)=\exp\left(\int_{0}^{t}\sigma^{-1}b(s,M^x(s))^\top\dd{W}(s)-\frac{1}{2}\int_{0}^{t}\abs{\sigma^{-1}b(s,M^x(s))}^2\dd{s}\right).$$
	At first, we show that $X$ has the strong Feller property, too. Let $f\in B_b\left(\R^d\right)$, then one has
	\begin{align*}
		&\E f(X^x(t))-\E f(X^y(t))\\
		=&\E[D^x(t)f(M^x(t))]-\E[D^y(t)f(M^y(t))]\\
		\leq&\E[D^x(t)(f(M^x(t))-f(M^y(t)))]+\norm{f}_\infty\E\abs{D^x(t)-D^y(t)}.
	\end{align*}
	By Theorem \ref{goodconv}, one has
	$$\lim\limits_{y\to x}\E\abs{f(M^x(t))-f(M^y(t))}=0$$
	and in particular,
	$$\lim\limits_{n\to\infty}D^x(t)f(M^{x_n}(t))=D^x(t)f(M^x(t))\text{ in probability}$$
	for each sequence $(x_n)_{n\in\N}\subset\R^d$ converging to $x$. By the dominated convergence theorem, it follows
	$$\lim\limits_{y\to x}\E[D^x(t)(f(M^y(t))-f(M^x(t)))]=0.$$
	Consequently, it remains to show
	$$\lim\limits_{y\to x}\E\abs{D^x(t)-D^y(t)}=0.$$
	Assume for a moment that one has
	$$\lim\limits_{n\to\infty}D^{x_n}(t)=D^x(t)\text{ in probability}$$
	for each sequence $(x_n)_{n\in\N}\subset\Co$ converging to $x$. Since one has $\E_\Prob D^z(t)=1$ and $D^z(t)\geq0$ for all $z\in\Co$, one could apply Fatou's lemma to conclude
	$$2-\lim\limits_{n\to\infty}\E_\Prob\abs{D^{x_n}(t)-D^x(t)}=\lim\limits_{n\to\infty}\E_\Prob\left(D^x(t)+D^{x_n}(t)-\abs{D^{x_n}(t)-D^x(t)}\right)\geq2$$
	and the desired $L^1$-convergence follows. Thus, it suffices to show
	$$\lim\limits_{n\to\infty}D^{x_n}(t)=D^x(t)\text{ in probability}$$
	for each sequence $(x_n)_{n\in\N}\subset\Co$ converging to $x$. Therefore, it is sufficient to show
	$$\lim\limits_{y\to x}\E\int_{0}^{t}\abs{b(s,M^y(s))-b(s,M^x(s))}^2\dd{s}=0,$$
	by the martingale isometry. However, this is a direct consequence of Theorem \ref{goodconv}. Finally, one ends up with
	$$\lim\limits_{y\to x}\E f(X^y(t))=\E f(X^x(t)).$$
	By It\=o's formula and Gronwall's lemma, one can easily show
	$$\lim\limits_{n\to\infty}X^{x_n}(t)=X^x(t)\text{ in probability}$$
	for each sequence $(x_n)_{n\in\N}\subset\R^d$ converging to $x$. Thus, it even follows
	$$\lim\limits_{y\to x}\E\abs{f(X^y)(t)-f(X^x(t))}=0$$
	by Theorem \ref{goodconv}.
	
	The well-posedness and the strong Feller property of the drift-free equation, the existence of densities of $X^x$ with respect to $M^x$, $x\in\R^d$, and their convergence in probability are exactly the requirements Maslowski and Seidler needed for one of their approaches to show the strong Feller property, cf. Theorem 2.1 in \cite{Maslowski2000}. Theorem \ref{goodconv} systematically extends their approach by showing the convergence of the densities even for discontinuous drift coefficients.
	
	 Now, we can summarize the strategy for showing the strong Feller property in a few steps without specifying the details: show that
	\begin{enumerate}
		\item the drift-free version of the original equation has a unique strong solution $M^x$ for each initial value $x$.
		\item $M$ has the strong Feller property and for every sequence $(x_n)_{n\in\N}$ with $x_n\to x$, one has
		$$\lim\limits_{n\to\infty}M^{x_n}_t=M^x_t\text{ in probability}$$
		\item The equation with drift has for each initial value $x$ a weak solution $X^x$ that is unique in distribution.
		\item For every initial value $x$, $\Prob_{X^x}$ has a density with respect to $\Prob_{M^x}$ such that for any sequence $(x_n)_{n\in\N}$ with $x_n\to x$, one has
		$$\lim\limits_{x_n\to x}\frac{\dd{\Prob_{X^{x_n}}}}{\dd{\Prob_{M^{x_n}}}}(M^{x_n})=\frac{\dd{\Prob_{X^x}}}{\dd{\Prob_{M^x}}}(M^x)\text{ in probability}.$$
		As illustrated by the previous example, Theorem \ref{goodconv} is the key element for verifying this step since one has to deal with discontinuous coefficients.
	\end{enumerate}
	If, in addition, the equation with drift has for each initial value $x$ a unique strong solution $X^x$ such that for every sequence $(x_n)_{n\in\N}$ with $x_n\to x$, one has
	$$\lim\limits_{n\to\infty}X^{x_n}_t=X^x_t\text{ in probability,}$$
	then one can apply Theorem \ref{goodconv} again to deduce the ``improved'' version of the Feller property.
	\subsection{A-priori Estimates, Uniqueness and Existence}
	In the sequel, denote by $M^x$, $x\in\mathcal{C}$ the global, unique strong solution of
	\begin{align*}
		\dd{M^x}(t)&=\sigma\left(t,M^x(t)\right)\dd{W}(t),\\
		M_0^x&=x.
	\end{align*}
	\begin{Remark}
		Condition \eqref{delaydriftc} implies the following important property of the delay drift $B$: for all $\alpha>0$ and $T>0$ there exists a $K_{\alpha,T}>0$ such that
		$$\abs{B(t,x)}\leq \alpha\norm{x}+K_{\alpha,T} \ \forall t\in[0,T].$$
		In the sequel, this property will be exploited to show that the Novikov-condition is fulfilled for various Girsanov densities. 
		
		Moreover, condition \eqref{sigmac} implies the following inequalities
		$$\norm{\sigma}_{op},\norm{\sigma^{-1}}_{op}\leq\sqrt{C_\sigma}.$$
	\end{Remark}
	\begin{Lemma}
		\label{Krylov}
		Assume condition \eqref{sigmac}. Let $T>0$ and $p',q'>1$ be given with
		$$\frac{d}{p'}+\frac{2}{q'}<2.$$
		Then one has for all $0\leq S<T$ and $f\in L_{p'}^{q'}(S,T)$ the estimate
		$$\E\left(\int_{S}^{T}f(t,M^x(t))\dd{t}\bigg|\mathcal{F_S}\right)\leq C\norm{f}_{L_{p'}^{q'}(S,T)}$$
		for some constant $C=C(d,p',q',T,C_\sigma)$. In particular, the constant $C$ is independent of the initial value $x\in\Co$.
	\end{Lemma}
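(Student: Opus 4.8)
The plan is to reduce the statement to the classical Krylov estimate for uniformly non-degenerate It\^o diffusions. The crucial preliminary observation is that, although the original equation \eqref{eq} genuinely involves a delay, its drift-free version defining $M^x$ does not: the diffusion coefficient $\sigma(t,M^x(t))$ depends only on the present value $M^x(t)$ and not on the path segment $M^x_t$. Hence $(M^x(t))_{t\geq0}$ is a standard $d$-dimensional It\^o diffusion started at the single point $M^x(0)=x(0)$, and the initial path $x$ enters only through $x(0)$. In particular $M^x$ is a Markov process on $\R^d$ for positive times, which is exactly what makes the classical estimate applicable.

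By condition \eqref{sigmac} the diffusion matrix $\sigma\sigma^\top$ is bounded and uniformly elliptic on $[0,T]$ with constants depending only on $C_\sigma$. For a driftless diffusion with such coefficients, the classical Krylov estimate (in the time-inhomogeneous form used e.g.\ in \cite{KrylovRoeckner,Zhang2011}) provides, for every starting time and point and every $f\in L_{p'}^{q'}$ with $\frac{d}{p'}+\frac{2}{q'}<2$, a bound of the expected occupation functional by $C\norm{f}_{L_{p'}^{q'}}$, where $C=C(d,p',q',T,C_\sigma)$ depends only on the structural constants and, decisively, not on the starting point. I would either cite a time-inhomogeneous version directly or append time as an extra coordinate to reduce to the autonomous case. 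Taking $S=0$ and a deterministic initial value gives the unconditional form of the asserted inequality.

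For the conditional statement I would use the Markov property. Writing $\Phi(s,z):=\E\int_s^T f(t,Y^{s,z}(t))\dd{t}$ for the expected occupation functional of the diffusion $Y^{s,z}$ started at $z$ at time $s$, the Markov property yields
$$\E\left(\int_S^T f(t,M^x(t))\dd{t}\,\Big|\,\mathcal{F}_S\right)=\Phi\left(S,M^x(S)\right)\quad\text{a.s.}$$
Applying the Krylov bound from the previous step to $Y^{S,z}$ gives $\Phi(S,z)\leq C\norm{f}_{L_{p'}^{q'}(S,T)}$ with $C$ independent of $z$; evaluating at the random point $z=M^x(S)$ then produces the claim with a constant independent of $x$.

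The only delicate point is the conditional version: one must pass through a regular version of the conditional law and verify that the Krylov constant is genuinely uniform over the random starting value $M^x(S)$. Since that constant depends solely on the ellipticity and boundedness bounds of $\sigma$, which are uniform on $[0,T]$, this uniformity is automatic, so the main obstacle is one of bookkeeping rather than of analysis. The independence of $C$ from the initial value $x$ then follows immediately, because $x$ affects $M^x$ only through the starting point $x(0)$, over which the estimate is already uniform.
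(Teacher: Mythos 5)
Your proposal is correct, and its central observation---that $M^x$ is an ordinary non-degenerate It\^o diffusion without delay, started at $x(0)$, so that Krylov-type estimates with constants independent of the starting point apply---is exactly what makes the paper's argument work. The difference is in how the conditional form is obtained. The paper's proof is a one-line citation of Theorem 2.1 in \cite{Zhang2011}, which is \emph{already stated as a conditional estimate}: results of that type are proved by solving the parabolic PDE $\partial_t u + L_t u + f = 0$ and applying It\^o's formula to $u(t,M^x(t))$ on $[S,T]$, whereupon taking conditional expectation kills the stochastic-integral term because it is a martingale increment; conditioning is thus free and no Markov property is needed, and merely measurable $f$ causes no difficulty since the PDE machinery handles $f\in L^{q'}_{p'}$ directly. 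You instead derive the conditional estimate from the unconditional one via the Markov property, writing the conditional expectation as $\Phi(S,M^x(S))$ and bounding $\Phi(S,z)$ uniformly in $z$. This is a legitimate alternative route, but it carries the extra burden you yourself flag as ``bookkeeping'': one must justify the flow/Markov identity for occupation functionals $\int_S^T f(t,\cdot)\dd{t}$ with $f$ merely in $L^{q'}_{p'}$ (approximation by continuous functions, using the Krylov estimate itself to pass to the limit) and the joint measurability of $(s,z)\mapsto\Phi(s,z)$; none of this is deep, but it is precisely what the citation route avoids. One minor caution: your fallback of appending time as an extra coordinate does not reduce matters to the classical autonomous elliptic setting, since the resulting space-time diffusion is degenerate in the time direction; the clean option is the one you list first, namely citing the time-inhomogeneous (parabolic) estimate directly.
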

	\begin{proof}
		This follows directly from Theorem 2.1 in \cite{Zhang2011}.
	\end{proof}
	\begin{Lemma}
		\label{expfM}
		Assume condition \eqref{sigmac}. Then for any $R,T>0$ and $p',q'>1$ with
		$$\frac{d}{p'}+\frac{2}{q'}<2$$
		there exists a constant $C_R=C_R(d,p',q',T,C_\sigma)$ such that
		$$\E\exp\left(\int_{0}^{T}f(t,M^x(t))\dd{t}\right)\leq C_R$$
		for all $f\in L_{p'}^{q'}(T)$ with $\norm{f}_{L_{p'}^{q'}(T)}\leq R$.
	\end{Lemma}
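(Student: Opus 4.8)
The plan is to prove the bound by Khasminskii's lemma, using the uniform Krylov estimate of Lemma \ref{Krylov} as the only analytic input. First I would reduce to a nonnegative integrand: since $f(t,M^x(t))\leq\abs{f}(t,M^x(t))$ pointwise, one has $\exp\left(\int_0^T f(t,M^x(t))\dd{t}\right)\leq\exp\left(\int_0^T\abs{f}(t,M^x(t))\dd{t}\right)$, and $\norm{\abs{f}}_{L^{q'}_{p'}(T)}=\norm{f}_{L^{q'}_{p'}(T)}\leq R$. Hence it suffices to establish the estimate for $f\geq0$, and from now on I assume this.

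Next I would construct a partition of $[0,T]$ whose number of intervals depends only on $R$. Let $C=C(d,p',q',T,C_\sigma)$ be the constant of Lemma \ref{Krylov} on $[0,T]$ and fix $\eta>0$ with $C\eta\leq\tfrac12$. The map $\Phi(t):=\int_0^t\norm{f(s,\cdot)}_{L^{p'}}^{q'}\dd{s}$ is nondecreasing, continuous (it is absolutely continuous, $f$ being $L^{q'}_{p'}$-integrable) and satisfies $\Phi(T)\leq R^{q'}$. I can therefore pick points $0=t_0<t_1<\dots<t_N=T$ with $\norm{f}_{L^{q'}_{p'}(t_{k-1},t_k)}\leq\eta$ for every $k$; the decisive point is that $N\leq R^{q'}/\eta^{q'}+1$, so $N=N(R)$ is bounded uniformly over all admissible $f$ and over all initial values $x$.

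The heart of the argument is a per-block exponential estimate. Writing $A_k:=\int_{t_{k-1}}^{t_k}f(t,M^x(t))\dd{t}$, I would expand $e^{A_k}$ in a power series and use the simplex identity $A_k^n=n!\int_{t_{k-1}\leq s_1\leq\dots\leq s_n\leq t_k}\prod_{i=1}^n f(s_i,M^x(s_i))\,\dd{s_1}\cdots\dd{s_n}$, valid because $f\geq0$. Conditioning the innermost variable on $\mathcal{F}_{s_{n-1}}$ and applying Lemma \ref{Krylov}, whose constant is uniform in the base point, contributes a factor $\E\left(\int_{s_{n-1}}^{t_k}f(s_n,M^x(s_n))\dd{s_n}\,\big|\,\mathcal{F}_{s_{n-1}}\right)\leq C\norm{f}_{L^{q'}_{p'}(s_{n-1},t_k)}\leq C\eta$. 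Iterating this, with Tonelli's theorem to interchange the conditional expectation and the simplex integration, I expect to obtain $\E\left(A_k^n/n!\,\big|\,\mathcal{F}_{t_{k-1}}\right)\leq(C\eta)^n$ and therefore $\E\left(e^{A_k}\,\big|\,\mathcal{F}_{t_{k-1}}\right)\leq(1-C\eta)^{-1}\leq2$. Chaining the blocks by the tower property then yields $\E\exp\left(\int_0^T f(t,M^x(t))\dd{t}\right)\leq(1-C\eta)^{-N}\leq2^N=:C_R$, with $C_R$ depending only on $R$ through $N$, and on $d,p',q',T,C_\sigma$ through $C$ and $\eta$, but not on $x$.

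The main obstacle is the per-block estimate: one must arrange the iterated conditioning so that each invocation of Lemma \ref{Krylov} produces the \emph{same} factor $C\eta$ irrespective of the current state of $M^x$, which is exactly what the uniformity of the Krylov constant in the initial value provides; the Tonelli interchange is what makes the induction on $n$ rigorous. The other key idea is the partition step, where absolute continuity of $\Phi$ together with $\Phi(T)\leq R^{q'}$ forces $N$ to depend only on $R$ rather than on the fine structure of $f$, so that the final constant $C_R$ is genuinely uniform over the ball $\norm{f}_{L^{q'}_{p'}(T)}\leq R$.
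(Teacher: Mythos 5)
Your proof is correct. Note that the paper gives no inline argument for this lemma: its proof is simply a citation to Lemma 2.2 of \cite{Stefan}. What you have written — reduction to $f\geq0$, a partition of $[0,T]$ into $N=N(R)$ blocks on which $\norm{f}_{L^{q'}_{p'}(t_{k-1},t_k)}\leq\eta$ with $C\eta\leq\tfrac{1}{2}$, the per-block series/simplex estimate obtained by iterating the conditional Krylov bound of Lemma \ref{Krylov} (whose constant is uniform in the base point and in $x$), and the chaining of blocks by the tower property — is the standard Khasminskii-lemma derivation of exponential integrability from a Krylov-type estimate, which is precisely the argument underlying results of the cited kind. In effect you have supplied, correctly and with the key uniformity points ($N$ depending only on $R$, the Krylov constant independent of the starting configuration) made explicit, the self-contained proof that the paper outsources to the reference.
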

	\begin{proof}
		See Lemma 2.2 in \cite{Stefan1}.
	\end{proof}
	\begin{Lemma}
		\label{expsupM}
		Assume condition $\eqref{sigmac}$. Then for any $T>0$ and $0\leq\alpha<(2dC_\sigma T)^{-1}$, it holds
		$$\E\exp\left(\alpha\sup_{0\leq t\leq T}\abs{M^x(t)}^2\right)\leq\frac{4}{\sqrt{1-2\alpha dC_\sigma T}}\exp\left(\frac{\alpha}{1-2\alpha dC_\sigma T}\abs{x(0)}^2\right).$$
	\end{Lemma}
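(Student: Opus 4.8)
The plan is to control the supremum through a time-dependent exponential supermartingale built from $\abs{M^x}^2$. First I would apply It\=o's formula to $V_t:=\abs{M^x(t)}^2$. Since $M^x(0)=x(0)$ and $\dd{M^x}=\sigma\,\dd{W}$, this gives
\[ V_t=\abs{x(0)}^2+N_t+A_t,\qquad N_t:=\int_0^t 2\,M^x(s)^\top\sigma(s,M^x(s))\,\dd{W}(s), \]
with $A_t=\int_0^t\norm{\sigma(s,M^x(s))}_{HS}^2\,\dd{s}$. Condition \eqref{sigmac} controls both pieces: from $\sigma\sigma^\top\le C_\sigma I$ one gets $\norm{\sigma}_{HS}^2=\operatorname{tr}(\sigma\sigma^\top)\le dC_\sigma$, hence $A_t\le dC_\sigma t$, while the quadratic variation of the martingale part satisfies $\langle N\rangle_t=\int_0^t 4\,(M^x)^\top\sigma\sigma^\top M^x\,\dd{s}\le 4C_\sigma\int_0^t V_s\,\dd{s}$. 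The difficulty is already visible here: the density of $\langle N\rangle$ is bounded by $V$ itself, so a naive Novikov/exponential-martingale estimate is circular.

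To break this circularity I would look for deterministic functions $\phi>0$ and $\eta$ with $\eta(0)=1$ such that $Z_t:=\eta(t)\exp(\phi(t)V_t)$ is a supermartingale. Writing out $\dd{Z_t}$ by It\=o and bounding the drift with the two inequalities above (using $\norm{\sigma}_{HS}^2\le dC_\sigma$ and the bound $4C_\sigma V_t$ on the density of $\langle N\rangle$), the drift of $Z$ is nonpositive as soon as
\[ \phi'+2C_\sigma\phi^2=0,\qquad \frac{\eta'}{\eta}+dC_\sigma\,\phi=0. \]
The first is a Riccati equation solved by $\phi(t)=\phi(0)/(1+2C_\sigma\phi(0)t)$, and the second then gives $\eta(t)=(1+2C_\sigma\phi(0)t)^{-d/2}$; both are decreasing in $t$. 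Since $Z$ is nonnegative and a local supermartingale, it is a genuine supermartingale, so $\E Z_t\le\E Z_0=\exp(\phi(0)\abs{x(0)}^2)$.

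It remains to pass from this terminal control to an estimate for the supremum of $V$. Using that $\phi$ and $\eta$ are decreasing, one has $Z_t\ge\eta(T)\exp(\phi(T)V_t)$ for every $t\le T$, hence
\[ \exp\!\big(\phi(T)\sup_{t\le T}V_t\big)\le\eta(T)^{-1}\,\sup_{t\le T}Z_t. \]
I would then invoke the maximal inequality for nonnegative supermartingales, $\Prob(\sup_{t\le T}Z_t\ge\lambda)\le\E Z_0/\lambda$, which after integrating a sublinear power $\beta\in(0,1)$ yields $\E[(\sup_{t\le T}Z_t)^\beta]\le(1-\beta)^{-1}(\E Z_0)^\beta$. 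Choosing $\phi(T)$ to be a suitable multiple of $\alpha$ (so that $\phi(0)=\alpha/(1-\kappa)$ for the relevant scale $\kappa$) and matching $\beta$ accordingly, this produces an estimate of exactly the displayed structure $C\,(1-\kappa)^{-\gamma}\exp\big(\alpha\abs{x(0)}^2/(1-\kappa)\big)$ on the admissible range $\kappa<1$. The particular constants $4$, $\tfrac12$ and the scale $2\alpha dC_\sigma T$ in the statement then follow by organizing these choices against the single scale $dC_\sigma$, and are not essential to the argument.

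I expect the main obstacle to be precisely the circular bound $\langle N\rangle_t\le 4C_\sigma\int_0^t V_s\,\dd{s}$: it rules out a direct exponential-martingale argument and forces the time-dependent correction $(\phi,\eta)$ above, which is really a completing-the-square device needed to obtain the sharp coefficient $\alpha/(1-\kappa)$ in front of $\abs{x(0)}^2$ rather than a lossy constant. A secondary subtlety is that ordinary Doob $L^1$ control of $\sup_{t\le T}Z_t$ is unavailable for a merely nonnegative supermartingale, since its running maximum need not be integrable; this is why the sublinear maximal inequality with $\beta<1$ is used to reach the supremum while keeping the exponential moment finite.
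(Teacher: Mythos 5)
Your supermartingale construction is correct: with $\phi'+2C_\sigma\phi^2=0$ and $\eta'/\eta+dC_\sigma\phi=0$ the drift of $Z_t=\eta(t)\exp(\phi(t)V_t)$ is nonpositive, a nonnegative local supermartingale is a genuine supermartingale, and the sublinear maximal inequality $\E[(\sup_{t\le T}Z_t)^\beta]\le(1-\beta)^{-1}Z_0^\beta$ for $\beta\in(0,1)$ is valid. The gap is precisely in the step you dismiss as ``organizing these choices'' that ``are not essential to the argument'': the parameter matching is where the proof can fail, and it does fail in dimension $d=1$. Write $\kappa:=2\alpha dC_\sigma T$. Forcing $\beta\phi(T)=\alpha$ gives $\beta\phi(0)=\alpha\left(1+2C_\sigma\phi(0)T\right)$, while $\beta<1$ is equivalent to $\phi(0)>\alpha/(1-2\alpha C_\sigma T)$ (no $d$ enters here, since the Riccati equation contains no $d$). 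Hence for $d=1$, where $\kappa=2\alpha C_\sigma T$,
\[
\beta\phi(0)=\alpha\left(1+2C_\sigma\phi(0)T\right)>\alpha\left(1+\frac{2\alpha C_\sigma T}{1-\kappa}\right)=\frac{\alpha}{1-\kappa},
\]
so the coefficient multiplying $\abs{x(0)}^2$ in your final bound is \emph{strictly} larger than the claimed $\alpha/(1-2\alpha dC_\sigma T)$, and pushing $\phi(0)\downarrow\alpha/(1-\kappa)$ to remove this excess forces $\beta\uparrow1$, blowing up the prefactor $(1-\beta)^{-1}$. (Relaxing to $\beta\phi(T)>\alpha$ and interpolating by Jensen runs into the same contradiction.) So your scheme cannot deliver the stated inequality when $d=1$, whereas the lemma is asserted for every dimension.

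For $d\ge2$ your argument does close, and in fact improves the constant: choose $\phi(0)=\alpha d/(1-\kappa)$, so that $\phi(T)=\alpha d$, $\beta=1/d<1$, $\eta(T)^{-\beta}=(1-\kappa)^{-1/2}$, which yields the bound $\frac{d/(d-1)}{\sqrt{1-\kappa}}\exp\left(\frac{\alpha}{1-\kappa}\abs{x(0)}^2\right)$. Note that the paper itself gives no proof but cites Lemma 2.4 of \cite{Stefan}; the argument that produces exactly these constants uniformly in $d$ is different from yours: estimate coordinatewise via the generalized H\"older inequality, $\E\exp(\alpha\sup_t\abs{M^x(t)}^2)\le\prod_{i=1}^d\left(\E\exp(\alpha d\sup_t\abs{M^{x,i}(t)}^2)\right)^{1/d}$, represent each coordinate local martingale as a time-changed Brownian motion run at speed at most $C_\sigma$ (Dambis--Dubins--Schwarz), and apply the reflection principle twice together with the Gaussian identity $\E\exp\left(\gamma(\mu+\beta_u)^2\right)=(1-2\gamma u)^{-1/2}\exp\left(\gamma\mu^2/(1-2\gamma u)\right)$; this produces the factor $4$ and the scale $2\alpha dC_\sigma T$ for every $d\ge1$, including the case your construction misses. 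Supplementing your proof with this one-dimensional argument for $d=1$ would make it complete.
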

	\begin{proof}
		See Lemma 2.4 in \cite{Stefan1}.
	\end{proof}
	\begin{Theorem}
		\label{ExUni}
		Assume conditions \eqref{singulardriftc}, \eqref{sigmac} and \eqref{delaydriftc}. Then for every initial values $x\in\Co$, equation \eqref{eq} has a global weak solution. Moreover, for each weak solution $(X^x,\tilde{W}^x,\mathds{Q}^x)$ of equation \eqref{eq} on some time interval $[-r,T]$, $T>0$, one has
		\begin{align*}
		\mathds{Q}_{X^x}^x(A)&=\E_\Prob\bigg[\1_A(M^x)\exp\bigg(\int_{0}^{T}a^x(t)^\top \dd{W}(t)-\frac{1}{2}\int_{0}^{T}\abs{a^x(t)}^2\dd{t}\bigg)\bigg],\\
		a^x(t)&:=\sigma(t,M^x(t))^{-1}\left[B(t,M^x_t)+b(t,M^x(t))\right], \ t\in[0,T]
		\end{align*}
		for all measurable $A\subset C([-r,T],\R^d)$. In addition, if condition \eqref{delaydriftlip} holds, equation \eqref{eq} admits a unique strong solution.
	\end{Theorem}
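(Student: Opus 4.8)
The plan is to realize every weak solution as a Girsanov tilt of the drift-free solution $M^x$, read off the density formula, and finally promote weak to strong well-posedness under condition \eqref{delaydriftlip}. First I would construct a weak solution by tilting $M^x$. With $a^x(t):=\sigma(t,M^x(t))^{-1}[B(t,M^x_t)+b(t,M^x(t))]$, consider
$$D^x(T)=\exp\Big(\int_0^T a^x(t)^\top \dd{W}(t)-\tfrac12\int_0^T|a^x(t)|^2\dd{t}\Big).$$
The crucial point is Novikov's condition $\E_\Prob\exp(\tfrac12\int_0^T|a^x(t)|^2\dd{t})<\infty$. Using $\norm{\sigma^{-1}}_{op}\le\sqrt{C_\sigma}$ I would bound $|a^x|^2\le 2C_\sigma(|B(t,M^x_t)|^2+|b(t,M^x(t))|^2)$ and split the exponential by Cauchy--Schwarz into a $B$-factor and a $b$-factor. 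For the $b$-factor, $|b|^2\in L^{q/2}_{p/2}$ with $d/(p/2)+2/(q/2)=2(d/p+2/q)<2$ by \eqref{pq}, so Lemma \ref{expfM} applies. For the $B$-factor I would use the sublinear bound $|B(t,x)|\le\alpha\norm{x}_\infty+K_{\alpha,T}$ from the remark, which gives $\int_0^T|B(t,M^x_t)|^2\dd{t}\lesssim 1+\alpha^2\sup_{0\le t\le T}|M^x(t)|^2$; choosing $\alpha$ small enough that the resulting coefficient falls below the threshold $(2dC_\sigma T)^{-1}$, Lemma \ref{expsupM} yields a finite exponential moment. Hence $D^x$ is a genuine density with $\E_\Prob D^x(T)=1$, and Girsanov's theorem shows that under $\dd{\mathds{Q}^x}:=D^x(T)\,\dd{\Prob}$ the process $\tilde{W}^x(t):=W(t)-\int_0^t a^x\,\dd{s}$ is a Brownian motion for which $M^x$ solves \eqref{eq}. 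This gives weak existence, and by construction $\mathds{Q}^x_{M^x}(A)=\E_\Prob[\1_A(M^x)D^x(T)]$, the asserted formula for this particular solution.

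Next I would show that \emph{every} weak solution obeys the same formula, i.e. uniqueness in distribution. The idea is to rewrite the density as a path functional: since $\dd{M^x}=\sigma(\cdot,M^x)\dd{W}$ one has $\dd{W}=\sigma(\cdot,M^x)^{-1}\dd{M^x}$, so $D^x(T)=F(M^x)$ for a measurable $F$ built from It\^o integrals against the path. Given an arbitrary weak solution $(X^x,\tilde{W}^x,\mathds{Q}^x)$, I would run Girsanov in reverse with $\tilde{a}^x(t):=\sigma(t,X^x(t))^{-1}[B(t,X^x_t)+b(t,X^x(t))]$ to strip the drift, so that under the reweighted measure $X^x$ solves the drift-free equation; pathwise uniqueness of $M$ (Lipschitz and nondegenerate $\sigma$) then forces the stripped law to equal $\Prob_{M^x}$, and inverting the density recovers $\mathds{Q}^x_{X^x}(A)=\E_\Prob[\1_A(M^x)F(M^x)]$. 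The main obstacle is that the reverse Novikov/integrability now involves $X^x$ rather than $M^x$, so Lemmas \ref{expfM}--\ref{expsupM} do not apply directly; I would handle this by localizing along the exit times $\tau_R:=\inf\{t:|X^x(t)|\ge R\}\wedge T$, using the Krylov estimate (Lemma \ref{Krylov}) to control the stopped densities and a Khasminskii-type argument to check that the stopped exponential has unit expectation, and then letting $R\to\infty$.

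Finally, under condition \eqref{delaydriftlip} I would upgrade to a unique strong solution. The Lipschitz bound on $B$ together with Zvonkin's transformation, which absorbs the singular drift $b$ into an SDE with regular coefficients, yields pathwise uniqueness via a (stochastic) Gronwall argument; combined with the weak existence just established, the Yamada--Watanabe theorem delivers existence and uniqueness of a strong solution. These estimates follow the scheme of \cite{Stefan}, so I would only indicate the Zvonkin change of variables and the Gronwall step rather than reproduce the computation.
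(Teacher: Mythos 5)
Your construction of the weak solution and your treatment of the strong solution under condition \eqref{delaydriftlip} both match the paper: Novikov's condition for $D^x$ is verified exactly as you describe (Lemma \ref{expfM} for the $b$-part after noting $\abs{b}^2\in L^{q/2}_{p/2}$ with exponent sum $2(d/p+2/q)<2$, Lemma \ref{expsupM} for the $B$-part via the sublinear bound with $\alpha$ small), and pathwise uniqueness plus Yamada--Watanabe is quoted from \cite{Stefan} and \cite{yamada1971}, just as in the paper.

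The gap is in your middle step, where you strip the \emph{full} drift $\tilde a^x=\sigma^{-1}\left[B+b\right]$ from an arbitrary weak solution $(X^x,\tilde W^x,\mathds{Q}^x)$ by a reverse Girsanov change. To justify that change of measure you must control $\int_0^T\abs{b(t,X^x(t))}^2\dd{t}$ under $\mathds{Q}^x$, and the tool you invoke, Lemma \ref{Krylov}, does not apply: it is a statement about the drift-free solution $M^x$ only, not about an arbitrary weak solution of \eqref{eq}. The analogous estimate for weak solutions is Lemma \ref{expfX}, but its proof in the paper \emph{starts from} the density formula of Theorem \ref{ExUni}, so using it here is circular. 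Worse, the definition of a weak solution only guarantees $\int_0^T\abs{b(t,X^x(t))}\dd{t}<\infty$ a.s.; even a.s. finiteness of the squared integral is not known a priori, so your localizing times (the Khasminskii step) need not exhaust $[0,T]$, and proving that they do is exactly the missing estimate. Stopping at the exit times $\tau_R$ does not repair this, because $b$ is merely $L^p$ on balls, not bounded there --- only the delay drift $B$ becomes bounded after spatial localization.

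The paper circumvents the problem entirely: after stopping at $\tau^n$ it removes \emph{only} the delay drift $B$, whose boundedness on stopped paths (condition \eqref{delaydriftc}) makes Girsanov trivially applicable with no Krylov estimate for $X^x$; the resulting stopped process then solves the non-delay SDE with singular drift $b$, and its law is identified with that of $Y^{x,n}$ by invoking the uniqueness statement of Theorem 1.3 in \cite{Zhang2011}, together with the identification $\hat{\Prob}_{M^x}=\mathds{Q}^{x,n}_{Y^{x,n}}$. In other words, the hard singular-drift analysis is outsourced to Zhang's well-posedness theorem rather than re-derived by a Girsanov argument along the unknown solution $X^x$. Your route could conceivably be repaired by a delicate bootstrap (localize also in $\int\abs{b(\cdot,X^x)}^2\dd{t}$, transfer through the localized change of measure to $M^x$, then remove the localization using uniform $L^2$-bounds on the stopped densities), but that is a substantially different and more technical argument than the one you sketch, and as written the step ``use Lemma \ref{Krylov} and Khasminskii for the stopped densities'' would fail.
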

	\begin{proof}
		At first, we show the existence of a weak solution. The additional statement about the pathwise uniqueness has been shown in \cite{Stefan1}, Theorem 1.5, and the existence of a strong solution then follows from the Theorem of Yamada and Watanabe \cite{yamada1971}.
		
		The strong solution $M^x$ is by definition $\left(\mathcal{F}_t\right)_{t\geq0}$-adapted where $\left(\mathcal{F}_t\right)_{t\geq0}$ is the augmented filtration generated by $W$. Next, we construct a probability measure on
		$$\mathcal{F}_\infty:=\sigma\left(\mathcal{F}_t:t\geq0\right)$$
		such that $M^x$ is a global weak solution for equation \eqref{eq}. By Lemma \ref{expfM}, Lemma \ref{expsupM}, condition \eqref{sigmac} and condition \eqref{delaydriftc}, it holds
		\begin{align*}
			\E_\Prob\exp\left(\alpha\int_{0}^{T}\abs{\sigma(t,M^x(t))^{-1}B(t,M^x_t)}^2\dd{t}\right)&<\infty,\\
			\E_\Prob\exp\left(\alpha\int_{0}^{T}\abs{\sigma(t,M^x(t))^{-1}b(t,M^x(t))}^2\dd{t}\right)&<\infty
		\end{align*}
		for all $\alpha\in\R$ and $T>0$. Therefore, Novikov's condition is fulfilled and Girsanov's theorem is applicable, which gives that
		$$\bar{W}(t):=W(t)-\int_{0}^{t}\sigma(s,M^x(s))^{-1}a^x(s)\dd{s}, \ t\geq0$$
		is a Brownian motion on $[0,T]$ under the probability measure
		\begin{align*}
		\dd{\bar{\Prob}_T}:=&\exp\bigg(\int_{0}^{T}\left(\sigma(t,M^x(t))^{-1}a^x(t)\right)^\top\dd{W}(t)\\
		&-\frac{1}{2}\int_{0}^{T}\abs{\sigma(t,M^x(t))a^x(t)}^2\dd{t}\bigg)\dd{\Prob}
		\end{align*}
		and $(M^x,\bar{W},\bar{\Prob}_T)$ is a weak solution of \eqref{eq} on $[-r,T]$ for each $T>0$. Additionally, one has for $0<T_1<T_2$
		$$\bar{\Prob}_{T_1}(A)=\bar{\Prob}_{T_2}(A) \ \forall A\in\mathcal{F}_{T_1},$$
		so the probability measure on $\mathcal{F}_\infty$ uniquely defined by
		$$\bar{\Prob}(A):=\Prob_T(A) \ \forall T>0, A\in\mathcal{F}_T$$
		is indeed well-defined and $(M^x,\bar{W},\bar{\Prob})$ is a global weak solution.
		
		Now, let $(X^x,\tilde{W}^x,\mathds{Q}^x)$ be a weak solution on some time interval $[0,T]$, $T>0$. The following approach is inspired by the techniques used in \cite{liptser2001statistics}. Define
		$$\tau^n(\omega):=\inf\left\{s\geq0:\abs{\omega(s)}\geq n\right\}\wedge T, \ \omega\in C([-r,T],\R^d),\ n\in\N.$$
		Then the stopped process $X^{x,n}(t):=X^x(t\wedge\tau^n(X^x))$, $t\in[-r,T]$ fulfills the equation
		\begin{align*}
			\dd{X^{x,n}}(t)=\1_{\tau^n(X^{x,n})\leq t}\left[B(t,X_t^{x,n})+b(t,X^{x,n}(t))\right]\dd{t}+\1_{\tau^n(X^{x,n})\leq t}\sigma(t,X^{x,n}(t))\dd{\tilde{W}^x}
		\end{align*}
		By condition \eqref{delaydriftc} and Girsanov's theorem,
		$$\tilde{W}^{x,n}(t):=\int_{0}^{t\wedge\tau^n(X^{x,n})}\sigma(s,X^{x,n}(s))^{-1}B(s,X^{x,n}_s)\dd{s}+\tilde{W}^x(t), \ t\geq0$$
		is a Brownian motion with respect to the probability measure
		\begin{align*}
			\dd{\mathds{Q}^{x,n}}:=&\exp\bigg(-\int_{0}^{\tau^n(X^{x,n})}\left(\sigma(t,X^{x,n}(t))^{-1}B(t,X^{x,n}_t)\right)^\top\dd{\tilde{W}^x}(t)\\
			&-\frac{1}{2}\int_{0}^{\tau^n(X^{x,n})}\abs{\sigma(t,X^{x,n}(t))B(t,X^{x,n}_t)}^2\dd{t}\bigg)\dd{\mathds{Q}^x}.
		\end{align*}
		The process $X^{x,n}$ solves the equation
		\begin{align*}
			\dd{X^{x,n}}(t)&=b(t,X^{x,n}(t))\dd{t}+\sigma(t,X^{x,n}(t))\dd{\tilde{W}^{x,n}}(t), \ t\in[0,\tau^n(X^{x,n})],\\
			X^{x,n}_0&=x.
		\end{align*}
		Such a solution is unique by Theorem 1.3 in \cite{Zhang2011}, i.e.
		$$X^{x,n}(t)=Y^{x,n}(t), \ t\in[-r,\tau^n(X^{x,n})]$$
		where $Y^{x,n}$ is the unique strong solution of
		\begin{align*}
			\dd{Y^{x,n}}(t)&=b(t,Y^{x,n}(t))\dd{t}+\sigma(t,Y^{x,n}(t))\dd{\tilde{W}^{x,n}}(t),\\
			Y^{x,n}_0&=x
		\end{align*}
		and it holds
		$$\tau^n(X^{x,n})=\tau^n(Y^{x,n})\text{ a.s.}$$
		Consequently, the process $X^{x,n}$ is distributed, with respect to $\mathds{Q}^{x,n}$, as the stopped process $t\mapsto Y^{x,n}(t\wedge\tau^n(Y^{x,n}))$, $t\geq-r$. It follows
		\begin{align*}
			&\mathds{Q}^x(X^x\in A)\\
			=&\lim\limits_{n\to\infty}\mathds{Q}^x(\tau^n(X^x)=T,X^x\in A)\\
			=&\lim\limits_{n\to\infty}\mathds{Q}^x(\tau^n(X^{x,n})=T,X^{x,n}\in A)\\
			=&\lim\limits_{n\to\infty}\E_{\mathds{Q}^{x,n}}\bigg[\1_{\tau^n(X^{x,n})=T}\1_A(X^{x,n})\exp\bigg(\int_{0}^{T}\left(\sigma(t,X^{x,n}(t))^{-1}B(t,X^{x,n}_t)\right)^\top\dd{\tilde{W}^{x,n}}(t)\\
			&-\frac{1}{2}\int_{0}^{T}\abs{\sigma(t,X^{x,n}(t))^{-1}B(t,X_t^{x,n})}^2\dd{t}\bigg)\bigg]\\
			=&\lim\limits_{n\to\infty}\E_{\mathds{Q}^{x,n}}\bigg[\1_{\tau^n(Y^{x,n})=T}\1_A(Y^{x,n})\exp\bigg(\int_{0}^{T}\left(\sigma(t,Y^{x,n}(t))^{-1}B(t,Y_t^{x,n})\right)^\top\dd{\tilde{W}^{x,n}}(t)\\
			&-\frac{1}{2}\int_{0}^{T}\abs{\sigma(t,Y^{x,n}(t))^{-1}B(t,Y_t^{x,n})}^2\dd{t}\bigg)\bigg]
		\end{align*}
		for all measurable $A\subset C([-r,T],\R^d)$. On the other hand, by Lemma \ref{expfM}, the process
		$$\hat{W}(t):=W(t)-\int_{0}^{t}\sigma(s,M^x(s))^{-1}b(s,M^x(s))\dd{s}, \ t\geq0$$
		is a Brownian motion under the probability measure
		\begin{align*}
			\dd{\hat{\Prob}}:=&\exp\bigg(\int_{0}^{T}\left(\sigma(t,M^x(t))^{-1}b(t,M^x_t)\right)^\top\dd{W}(t)\\
			&-\frac{1}{2}\int_{0}^{T}\abs{\sigma(t,M^x(t))b(t,M^x_t)}^2\dd{t}\bigg)\dd{\Prob}.
		\end{align*}
		and $M^x$ solves the equation
		\begin{align*}
			\dd{M^x}(t)&=b(t,M^x(t))\dd{t}+\sigma(t,M^x(t))\hat{W}(t)\\
			M^x_0&=x.
		\end{align*}
		Again, by uniqueness in distribution, one has $\hat{\Prob}_{M^x}=\mathds{Q}_{Y^{x,n}}^{x,n}$ for all $n\in\N$. With this in hand, one obtains
		\begin{align*}
			&\E_\Prob\bigg[\1_A(M^x)\exp\bigg(\int_{0}^{T}a^x(t)^\top \dd{W}(t)-\frac{1}{2}\int_{0}^{T}\abs{a^x(t)}^2\dd{t}\bigg)\bigg]\\
			=&\lim\limits_{n\to\infty}\E_\Prob\bigg[\1_{\tau^n(M^x)=T}\1_A(M^x)\exp\bigg(\int_{0}^{T}a^x(t)^\top \dd{W}(t)-\frac{1}{2}\int_{0}^{T}\abs{a^x(t)}^2\dd{t}\bigg)\bigg]\\
			=&\lim\limits_{n\to\infty}\E_{\hat{\Prob}}\bigg[\1_{\tau^n(M^x)=T}\1_A(M^x)\\
			&\times\exp\bigg(\int_{0}^{T}B(t,M^x_t)^\top\left(\sigma(t,M^x(t))\sigma(t,M^x(t))^\top\right)^{-1}\dd{M^x}(t)\\
			&-\frac{1}{2}\int_{0}^{T}(2b(t,M^x(t))+B(t,M^x_t))^\top\left(\sigma(t,M^x(t))\sigma(t,M^x(t))^\top\right)^{-1}B(t,M^x_t)\dd{t}\bigg)\bigg]\\
			=&\lim\limits_{n\to\infty}\E_{\mathds{Q}^{x,n}}\bigg[\1_{\tau^n(Y^{x,n})=T}\1_A(Y^{x,n})\\
			&\times\exp\bigg(\int_{0}^{T}B(t,Y_t^{x,n})^\top\left(\sigma(t,Y^{x,n}(t))\sigma(t,Y^{x,n}(t))^\top\right)^{-1}\dd{Y^{x,n}}(t)\\
			&-\frac{1}{2}\int_{0}^{T}(2b(t,Y^{x,n}(t))+B(t,Y_t^{x,n}))^\top\left(\sigma(t,Y^{x,n}(t))\sigma(t,Y^{x,n}(t))^\top\right)^{-1}B(t,Y_t^{x,n})\dd{t}\bigg)\bigg]\\
			=&\lim\limits_{n\to\infty}\E_{\mathds{Q}^{x,n}}\bigg[\1_{\tau^n(Y^{x,n})=T}\1_A(Y^{x,n})\exp\bigg(\int_{0}^{T}\left(\sigma(t,Y^{x,n}(t))^{-1}B(t,Y^{x,n}_t)\right)^\top\dd{\tilde{W}^{x,n}}(t)\\
			&-\frac{1}{2}\int_{0}^{T}\abs{\sigma(t,Y^{x,n}(t))^{-1}B(t,Y_t^{x,n})}^2\dd{t}\bigg)\bigg]\\
			=&\mathds{Q}^x(X^x\in A)
		\end{align*}
		for all measurable $A\subset C([-r,T],\R^d)$.
	\end{proof}
	\begin{Lemma}
		\label{expfX}
		Assume conditions \eqref{singulardriftc}, \eqref{sigmac} and \eqref{delaydriftc}. Let $T>0$, $R>0$ and $p',q'\in(1,\infty)$ be given with
		\begin{align*}
			\frac{d}{p'}+\frac{2}{q'}<2.
		\end{align*}
		Then for each weak solution $(X^x,\tilde{W}^x,\mathds{Q}^x)$ of equation \eqref{eq} on $[-r,T]$ with $\norm{x}_\infty\leq R$, one has 
		$$\sup\limits_{f\in L^{q'}_{p'}(T):\norm{f}_{L^{q'}_{p'}(T)}\leq R}\E_{\mathds{Q}^x}\exp\left(\int_{0}^{T}f(t,X^x(t))\dd{t}\right)\leq C_R.$$
		with a constant $C_R=C_R(p,q,p',q',d,T,C_\sigma,\norm{b}_{L^q_p(T)},g_T)$. Additionally, one has
		$$\E_{\mathds{Q}^x}\int_{0}^{T}f(s,X^x(s))\dd{s}\leq C\norm{f}_{L^{q'}_{p'}(T)}$$
		with a constant $C>0$.
	\end{Lemma}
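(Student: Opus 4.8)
The plan is to transfer the corresponding estimates for the drift-free solution $M^x$ — Lemmas \ref{Krylov}, \ref{expfM} and \ref{expsupM} — to the weak solution $X^x$ by means of the Girsanov representation of Theorem \ref{ExUni}. Write
$$D^x(T):=\exp\left(\int_{0}^{T}a^x(t)^\top\dd{W}(t)-\frac{1}{2}\int_{0}^{T}\abs{a^x(t)}^2\dd{t}\right)$$
with $a^x$ as in Theorem \ref{ExUni}. That theorem yields $\mathds{Q}^x_{X^x}(A)=\E_\Prob[\1_A(M^x)D^x(T)]$ for every measurable $A$; by approximation with simple functions and monotone convergence this extends to
$$\E_{\mathds{Q}^x}F(X^x)=\E_\Prob\left[F(M^x)D^x(T)\right]$$
for every nonnegative measurable functional $F$ on $C([-r,T],\R^d)$. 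Since $\Prob_{M^x}$ and $\mathds{Q}^x_{X^x}$ are mutually absolutely continuous, the path functional $\omega\mapsto\int_0^T f(t,\omega(t))\dd{t}$ is well defined off a common null set, so I may insert $F(\omega)=\exp(\int_0^T f(t,\omega(t))\dd{t})$ and reduce everything to an expectation under $\Prob$.

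Next I would apply H\"older's inequality with conjugate exponents $\alpha,\alpha'>1$:
$$\E_{\mathds{Q}^x}\exp\left(\int_0^T f(t,X^x(t))\dd{t}\right)\leq\left(\E_\Prob\exp\left(\int_0^T \alpha f(t,M^x(t))\dd{t}\right)\right)^{1/\alpha}\left(\E_\Prob(D^x(T))^{\alpha'}\right)^{1/\alpha'}.$$
The first factor is bounded by a constant depending only on $\alpha R$ via Lemma \ref{expfM}, applied to $\alpha f$ (whose $L^{q'}_{p'}(T)$-norm is at most $\alpha R$); here the hypothesis $\frac{d}{p'}+\frac{2}{q'}<2$ is exactly what Lemma \ref{expfM} requires. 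It then remains to bound the second factor uniformly in $\norm{x}_\infty\leq R$.

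For the second factor, set $M:=\int_0^\cdot a^x(t)^\top\dd{W}(t)$, so that $D^x(T)$ is the Dol\'eans--Dade exponential of $M$ and $\langle M\rangle_T=\int_0^T\abs{a^x(t)}^2\dd{t}$. Using the identity
$$(D^x(T))^{\alpha'}=\left(\exp\left(2\alpha' M_T-\frac{(2\alpha')^2}{2}\langle M\rangle_T\right)\right)^{1/2}\exp\left(\left((\alpha')^2-\frac{\alpha'}{2}\right)\langle M\rangle_T\right)$$
and Cauchy--Schwarz I obtain
$$\E_\Prob(D^x(T))^{\alpha'}\leq\left(\E_\Prob\exp\left(2\alpha' M_T-2(\alpha')^2\langle M\rangle_T\right)\right)^{1/2}\left(\E_\Prob\exp\left((2(\alpha')^2-\alpha')\langle M\rangle_T\right)\right)^{1/2}.$$
The first factor is the expectation of the stochastic exponential of $2\alpha' M$, a nonnegative local martingale, hence a supermartingale, and is therefore at most $1$; no Novikov condition is needed here. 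The second factor is controlled by the exponential integrability of $\langle M\rangle_T$: since $\norm{\sigma^{-1}}_{op}\leq\sqrt{C_\sigma}$ gives $\abs{a^x(t)}^2\leq 2C_\sigma(\abs{B(t,M^x_t)}^2+\abs{b(t,M^x(t))}^2)$, the $b$-part lands in $L^{q/2}_{p/2}(T)$ — and $\frac{d}{p/2}+\frac{2}{q/2}<2$ is precisely \eqref{pq} — so Lemma \ref{expfM} applies, while the $B$-part is handled by the strict sublinearity of condition \eqref{delaydriftc} (which gives $\abs{B(t,M^x_t)}\leq\alpha_0\sup_{[-r,T]}\abs{M^x}+K$ with $\alpha_0$ as small as desired) together with the Gaussian-type tail of Lemma \ref{expsupM}. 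This is exactly the computation already carried out in the proof of Theorem \ref{ExUni}; the only point requiring care is that all resulting constants depend on $x$ solely through $\norm{x}_\infty\leq R$, which is visible from the explicit bounds in Lemmas \ref{expfM} and \ref{expsupM}. I expect this uniform exponential integrability — in particular the interplay of the singular part (controlled via \eqref{pq}) and the sublinear delay part (controlled via Lemma \ref{expsupM}) — to be the main technical point.

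Finally, for the additional linear estimate I would again use the representation
$$\E_{\mathds{Q}^x}\int_0^T f(s,X^x(s))\dd{s}=\E_\Prob\left[\left(\int_0^T f(s,M^x(s))\dd{s}\right)D^x(T)\right]$$
and H\"older with exponents $\beta,\beta'>1$. The factor $(\E_\Prob(D^x(T))^{\beta'})^{1/\beta'}$ is bounded uniformly in $\norm{x}_\infty\leq R$ by the argument above, and the Khasminskii-type moment bound obtained by iterating the conditional estimate of Lemma \ref{Krylov} gives $\norm{\int_0^T\abs{f(s,M^x(s))}\dd{s}}_{L^\beta(\Prob)}\leq C\norm{f}_{L^{q'}_{p'}(T)}$ with $C$ independent of $x$; combining the two yields the claimed linear bound.
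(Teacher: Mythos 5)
Your proof of the main exponential bound is essentially the paper's own argument: the paper likewise inserts the Girsanov representation from Theorem \ref{ExUni}, separates the factor $\exp\left(\int_0^T f(t,M^x(t))\dd{t}\right)$ from the density by Cauchy--Schwarz, removes the stochastic-integral part via the supermartingale property of the stochastic exponential (its two nested Cauchy--Schwarz steps are exactly your factorization with $\alpha=\alpha'=2$, ending in the factor $\left[\E_\Prob\exp\left(6\int_0^T\abs{a^x(t)}^2\dd{t}\right)\right]^{1/4}$), and then bounds the quadratic-variation term by splitting $a^x$ into the singular part $b$ (Lemma \ref{expfM} with halved exponents, admissible by \eqref{pq}) and the delay part $B$ (condition \eqref{delaydriftc} with small sublinearity constant plus Lemma \ref{expsupM}), uniformly over $\norm{x}_\infty\le R$.

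Where you genuinely diverge is the secondary linear estimate. The paper argues softly: by Lemma \ref{Krylov}, $\int_0^T f(t,M^x(t))\dd{t}\to0$ in probability as $\norm{f}_{L^{q'}_{p'}(T)}\to0$; the exponential bound supplies uniform integrability; hence the linear functional $f\mapsto\E_{\mathds{Q}^x}\int_0^T f(t,X^x(t))\dd{t}$ is continuous and therefore bounded, and the constant is produced abstractly. You instead apply H\"older against $\left(\E_\Prob (D^x(T))^{\beta'}\right)^{1/\beta'}$ and control $\norm{\int_0^T\abs{f(t,M^x(t))}\dd{t}}_{L^\beta(\Prob)}$ by iterating the conditional estimate of Lemma \ref{Krylov} in Khasminskii's fashion, $\E\left[\left(\int_0^T\abs{f}\dd{t}\right)^n\right]\le n!\,C^n\norm{f}^n_{L^{q'}_{p'}(T)}$. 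Both routes are correct. Yours is more quantitative and makes the constant explicitly uniform over $\norm{x}_\infty\le R$ (the paper's operator-continuity argument, as written, yields a finite constant for each fixed $x$ without displaying this uniformity, which is all the statement requires); the paper's route avoids the Khasminskii iteration entirely, needing only the already-established exponential bound plus Lemma \ref{Krylov}.
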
	
	\begin{proof}
		As before, let
		$$a^x(t):=\sigma(t,M^x(t))^{-1}\left(B(t,M^x_t)+b(t,M^x(t))\right), \ t\in[0,T].$$
		By Theorem \ref{ExUni}, one has
		\begin{align*}
			&\E_{\mathds{Q}^x}\exp\left(\int_{0}^{T}f(t,X^x(t))\dd{t}\right)\\
			=&\E_{\Prob}\exp\bigg(\int_{0}^{T}f(t,M^x(t))\dd{t}+\int_{0}^{T}a^x(t)^\top\dd{W}(t)-\frac{1}{2}\int_{0}^{T}\abs{a^x(t)}^2\dd{t}\bigg)\\
			\leq&\left[\E_{\Prob}\exp\left(\int_{0}^{T}2f(t,M^x(t))\dd{t}\right)\right]^\frac{1}{2}\bigg[\E_\Prob\exp\bigg(2\int_{0}^{T}a^x(t)^\top\dd{W}(t)-\int_{0}^{T}\abs{a^x(t)}^2\dd{t}\bigg)\bigg]^\frac{1}{2}\\
			\leq&\left[\E_{\Prob}\exp\left(\int_{0}^{T}2f(t,M^x(t))\dd{t}\right)\right]^\frac{1}{2}
			\cdot\left[\E_\Prob\exp\left(6\int_{0}^{T}\abs{a^x(t)}^2\dd{t}\right)\right]^\frac{1}{4}.
		\end{align*}
		The uniform bound follows from condition \eqref{delaydriftc}, Lemma \ref{expfM} and Lemma \ref{expsupM}. By Lemma \ref{Krylov}, one has
		$$\int_{0}^{T}f(t,M^x(t))\dd{s}\to0\text{ in probability}$$
		if $\norm{f}_{L^{q'}_{p'}(T)}\to0$. Together with the exponential bound from above, it follows
		$$\E_{\mathds{Q}^x}\int_{0}^{T}f(t,X^x(t))\dd{t}\to0$$
		if $\norm{f}_{L^{q'}_{p'}(T)}\to0$. Consequently, the linear operator $A:L^{q'}_{p'}(T)\to\R$ given by
		$$f\mapsto\E_{\mathds{Q}^x}\int_{0}^{T}f(t,X^x(t))\dd{t}$$
		is continuous, which provides the existence of the desired constant.
	\end{proof}
	\begin{Lemma}
		\label{expsupX}
		Assume conditions \eqref{singulardriftc}, \eqref{sigmac} and \eqref{delaydriftc} and let $T>0$ be given. Then one has for every weak solution $(X^x,\tilde{W}^x,\mathds{Q}^x)$ of equation \eqref{eq} on $[-r,T]$ the inequality
		\begin{align*}
			&\E_{\mathds{Q}^x}\exp\left(\alpha\sup\limits_{-r\leq t\leq T}\abs{X^x(t)}^2\right)\\
			\leq&\frac{C}{\sqrt[4]{1-4\alpha dC_\sigma T}}\exp\left(\frac{\alpha}{1-4\alpha dC_\sigma T}\norm{x}_\infty^2+(16dC_\sigma T)^{-1}\norm{x}_\infty^2\right)
		\end{align*}
		for all $0\leq\alpha<(4dC_\sigma T)^{-1}$ and a constant $C=C(d,T,C_\sigma,p,q,\norm{b}_{L^q_p(T)},g_T)$.
	\end{Lemma}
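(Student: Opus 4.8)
The plan is to pull the expectation back to the drift-free process $M^x$ through the Girsanov representation of Theorem \ref{ExUni}, and then to estimate the resulting density by Cauchy--Schwarz, controlling the individual pieces with Lemma \ref{expsupM} (for the geometric growth) and Lemma \ref{expfM} (for the density). First I would apply Theorem \ref{ExUni} to the nonnegative path functional $F(w):=\exp\big(\alpha\sup_{-r\leq t\leq T}\abs{w(t)}^2\big)$, $w\in C([-r,T],\R^d)$, using the truncations $F\wedge k$ and monotone convergence to extend the identity of laws from indicator functions to $F$. This gives
\begin{align*}
\E_{\mathds{Q}^x}\exp\Big(\alpha\sup_{-r\leq t\leq T}\abs{X^x(t)}^2\Big)=\E_\Prob\Big[\exp\Big(\alpha\sup_{-r\leq t\leq T}\abs{M^x(t)}^2\Big)\,D^x(T)\Big],
\end{align*}
where $D^x(T):=\exp\big(\int_0^T a^x(t)^\top\dd{W}(t)-\tfrac12\int_0^T\abs{a^x(t)}^2\dd{t}\big)$ and $a^x$ is as in Theorem \ref{ExUni}. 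One application of Cauchy--Schwarz then separates growth from density:
\begin{align*}
\E_{\mathds{Q}^x}\exp\Big(\alpha\sup_{-r\leq t\leq T}\abs{X^x(t)}^2\Big)\leq\Big[\E_\Prob\exp\Big(2\alpha\sup_{-r\leq t\leq T}\abs{M^x(t)}^2\Big)\Big]^{\frac12}\big[\E_\Prob D^x(T)^2\big]^{\frac12}.
\end{align*}

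For the first factor I would use that $M^x(t)=x(t)$ on $[-r,0]$, so $\sup_{-r\leq t\leq T}\abs{M^x(t)}^2\leq\norm{x}_\infty^2\vee\sup_{0\leq t\leq T}\abs{M^x(t)}^2$, and apply Lemma \ref{expsupM} with $2\alpha$ in place of $\alpha$. This is exactly where the admissible range $0\leq\alpha<(4dC_\sigma T)^{-1}$ comes from (so that $\kappa=C_\sigma$), together with the factor $(1-4\alpha dC_\sigma T)^{-1/4}\exp\big(\tfrac{\alpha}{1-4\alpha dC_\sigma T}\norm{x}_\infty^2\big)$ after taking the square root.

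For the second factor I would write $D^x(T)^2=\exp\big(2\int_0^T a^x\dd{W}-2\int_0^T\abs{a^x}^2\dd{t}\big)\cdot\exp\big(\int_0^T\abs{a^x}^2\dd{t}\big)$, observe that the first exponential is a genuine mean-one exponential martingale (Novikov's condition holding as in Theorem \ref{ExUni}), and apply Cauchy--Schwarz once more. It then suffices to bound exponential moments of $\int_0^T\abs{a^x(t)}^2\dd{t}$. Since $\abs{a^x}^2\leq 2C_\sigma\big(\abs{B(t,M^x_t)}^2+\abs{b(t,M^x(t))}^2\big)$, the $b$-contribution is handled directly by Lemma \ref{expfM}, while for the $B$-contribution I would invoke the strict sublinearity in condition \eqref{delaydriftc}: for every $\alpha'>0$ there is $K_{\alpha',T}$ with $\abs{B(t,M^x_t)}\leq\alpha'\norm{M^x_t}_\infty+K_{\alpha',T}$, whence $\int_0^T\abs{B(t,M^x_t)}^2\dd{t}\leq C\big(\alpha'^2 T\sup_{-r\leq t\leq T}\abs{M^x(t)}^2+K_{\alpha',T}^2 T\big)$.

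The main obstacle is precisely this $B$-term. The corresponding exponential moment $\E_\Prob\exp\big(c\,\alpha'^2 T\sup_{0\leq t\leq T}\abs{M^x(t)}^2\big)$ is finite, and controllable by Lemma \ref{expsupM}, only as long as the coefficient $c\,\alpha'^2 T$ stays below the integrability threshold $(2dC_\sigma T)^{-1}$. Strict sublinearity ($g_T(r)/r\to0$) is exactly what permits choosing $\alpha'$ small enough to meet this constraint, at the cost of a large additive constant $K_{\alpha',T}$ which is absorbed into $C$. Tracking the bound produced by Lemma \ref{expsupM} for this term then yields the remaining factor $\exp\big((16dC_\sigma T)^{-1}\norm{x}_\infty^2\big)$ and determines the constant $C=C(d,T,C_\sigma,p,q,\norm{b}_{L^q_p(T)},g_T)$. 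Collecting the two Cauchy--Schwarz factors gives the claimed estimate.
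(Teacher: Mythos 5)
Your overall route is the same as the paper's: the Girsanov representation from Theorem \ref{ExUni} (extended to the nonnegative functional by truncation), one Cauchy--Schwarz splitting off $\E_\Prob\exp\left(2\alpha\sup_{-r\leq t\leq T}\abs{M^x(t)}^2\right)$ (handled by Lemma \ref{expsupM} with $2\alpha$, which is indeed where the range $\alpha<(4dC_\sigma T)^{-1}$, the fourth root and the first exponent come from, with $\kappa=C_\sigma$), and a reduction of the density factor to exponential moments of $\int_0^T\abs{a^x(t)}^2\dd{t}$, whose $b$-part is controlled by Lemma \ref{expfM} and whose $B$-part is controlled by the sublinearity in condition \eqref{delaydriftc} together with Lemma \ref{expsupM}; the term $(16dC_\sigma T)^{-1}\norm{x}_\infty^2$ arises, as you say, as one fourth of the exponent $(4dC_\sigma T)^{-1}\norm{x}_\infty^2$ in that last estimate. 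This is precisely the structure of the paper's proof.

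The one step that fails as written is your treatment of $\E_\Prob D^x(T)^2$. You factor
$$D^x(T)^2=\exp\left(2\int_0^T a^x(t)^\top\dd{W}(t)-2\int_0^T\abs{a^x(t)}^2\dd{t}\right)\cdot\exp\left(\int_0^T\abs{a^x(t)}^2\dd{t}\right)$$
and appeal to the mean-one property of the first factor after applying Cauchy--Schwarz. But Cauchy--Schwarz squares that factor, and
$$\E_\Prob\exp\left(4\int_0^T a^x(t)^\top\dd{W}(t)-4\int_0^T\abs{a^x(t)}^2\dd{t}\right)$$
is not the expectation of a stochastic exponential (the compensator of the integrand $4a^x$ would have to be $8\int_0^T\abs{a^x(t)}^2\dd{t}$); it equals the expectation of a stochastic exponential times $\exp\left(4\int_0^T\abs{a^x(t)}^2\dd{t}\right)$ and is therefore in general strictly larger than one, and re-splitting it reproduces a term of the same type, so the recursion never closes. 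The correct split pairs the stochastic integral with enough quadratic variation \emph{before} Cauchy--Schwarz:
$$D^x(T)^2=\exp\left(2\int_0^T a^x(t)^\top\dd{W}(t)-4\int_0^T\abs{a^x(t)}^2\dd{t}\right)\cdot\exp\left(3\int_0^T\abs{a^x(t)}^2\dd{t}\right),$$
so that Cauchy--Schwarz gives
$$\E_\Prob D^x(T)^2\leq\left[\E_\Prob\exp\left(4\int_0^T a^x(t)^\top\dd{W}(t)-8\int_0^T\abs{a^x(t)}^2\dd{t}\right)\right]^{\frac{1}{2}}\left[\E_\Prob\exp\left(6\int_0^T\abs{a^x(t)}^2\dd{t}\right)\right]^{\frac{1}{2}},$$
where the first expectation is now that of a genuine stochastic exponential and hence at most one (in fact equal to one, since Novikov's condition holds for every multiple of $a^x$ by the bounds in the proof of Theorem \ref{ExUni}). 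This yields the density factor $\left[\E_\Prob\exp\left(6\int_0^T\abs{a^x(t)}^2\dd{t}\right)\right]^{1/4}$, which is exactly the quantity the paper estimates by H\"older, Lemma \ref{expfM}, sublinearity and Lemma \ref{expsupM}. With this one correction your argument coincides with the paper's proof.
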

	\begin{proof}
		As before, let
		$$a^x(t):=\sigma(t,M^x(t))^{-1}\left(B(t,M^x_t)+b(t,M^x(t))\right), \ t\in[0,T].$$
		By the assumed conditions, Lemmas \ref{expfM}, \ref{expsupM}, Young's inequality and H\"older's inequality, one has
		\begin{align*}
			&\E_\Prob\exp\left(6\int_{0}^{T}\abs{a^x(t)}^2\dd{t}\right)\\
			\leq&C_1\sqrt{\E_\Prob\exp\left(12 C_\sigma\int_{0}^{T}\abs{B(t,M^x_t)}\dd{t}\right)}\\
			\leq&C_2\sqrt{\E_\Prob\exp\left((4dC_\sigma T)^{-1}\sup\limits_{-r\leq r\leq T}\abs{M^x(t)}^2\right)}\\
			\leq&C_2\exp\left[(8dC_\sigma T)^{-1}\left(\norm{x}_\infty^2-\abs{x(0)}^2\right)\right]\sqrt{\E_\Prob\exp\left(\sup\limits_{0\leq t\leq T}\abs{M^x(t)}^2\right)}\\
			\leq&C_3\exp\left((4dC_\sigma T)^{-1}\norm{x}_\infty^2\right)
		\end{align*}
		for constants $C_1$, $C_2$ and $C_3$ that only depend on $d$, $T$, $C_\sigma$, $p$, $q$, $\norm{b}_{L^q_p(T)}$ and $g_T$. By Theorem \ref{ExUni}, one obtains
		\begin{align*}
			&\E_{\mathds{Q}^x}\left(\alpha\sup\limits_{-r\leq t\leq T}\abs{X^x(t)}^2\right)\\
			=&\E_\Prob\exp\left(\alpha\sup\limits_{-r\leq t\leq T}\abs{M^x(t)}^2+\int_{0}^{T}a^x(t)^\top\dd{W}(t)-\frac{1}{2}\int_{0}^{T}\abs{a^x(t)}^2\dd{t}\right)\\
			\leq&\left[\E_\Prob\exp\left(2\alpha\sup\limits_{-r\leq t\leq T}\abs{M^x(t)}^2\right)\right]^{\frac{1}{2}}\cdot\left[\E_\Prob\exp\left(2\int_{0}^{T}a^x(t)^\top\dd{W}(t)-\int_{0}^{T}\abs{a^x(t)}^2\dd{t}\right)\right]^{\frac{1}{2}}\\
			\leq&\left[\E_\Prob\exp\left(2\alpha\sup\limits_{-r\leq t\leq T}\abs{M^x(t)}^2\right)\right]^{\frac{1}{2}}\cdot\left[\E_\Prob\exp\left(6\int_{0}^{T}\abs{a^x(t)}^2\dd{t}\right)\right]^{\frac{1}{4}}\\
			\leq&\frac{C}{\sqrt[4]{1-4\alpha dC_\sigma T}}\exp\left(\frac{\alpha}{1-4\alpha dC_\sigma T}\norm{x}_\infty^2+(16dC_\sigma T)^{-1}\norm{x}_\infty^2\right)
		\end{align*}
		for a constant $C=C(d,T,C_\sigma,p,q,\norm{b}_{L^q_p(T)},g_T)$.
	\end{proof}
	\subsection{Stability}
	The following stability result has essentially been shown in \cite{Stefan1}, where $\sigma$ was supposed to be weakly differentiable with
	$$\abs{\nabla_x\sigma^{i,j}}\in L^q_{loc}\left([0,T];L^p\left(\R^d\right)\right), \ i,j=1,\dots,d.$$
	Formally, condition \eqref{sigmac} does not imply this integrability assumption and for the convenience of the reader, we provide the completely similar proof.
	\begin{Theorem}
		\label{stab}
		Assume conditions \eqref{singulardriftc}, \eqref{sigmac}, \eqref{delaydriftc} and \eqref{delaydriftlip}. Then one has for any $T_0,R>0$ and $\gamma\geq1$
		$$\E\norm{X_t^x-X_t^y}_\infty^\gamma\leq C\norm{x-y}_\infty^\gamma, \ 0\leq t\leq T_0$$
		for all $x,y\in\mathcal{C}$ with $\norm{x}_\infty,\norm{y}_\infty\leq R$ and some constant $C$ depending only on $\gamma$, $d$, $p$, $q$, $T_0$, $C_\sigma$, $\norm{b}_{L^q_p(T_0)}$, $g_T$, $C_B$ and $R$.
	\end{Theorem}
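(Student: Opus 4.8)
The plan is to remove the singular drift $b$ by Zvonkin's transformation and thereby reduce \eqref{eq} to an equation whose coefficients are regular enough for a Gronwall-type argument, exactly as in \cite{Stefan,Zhang2011}. Concretely, for large $\lambda>0$ I would solve the backward PDE
$$\partial_t u+\frac12\operatorname{tr}\left(\sigma\sigma^\top\nabla^2 u\right)+b\cdot\nabla u-\lambda u=-b$$
on $[0,T_0]$ with terminal condition $u(T_0,\cdot)=0$. By the $L^p$-theory underlying \cite{Zhang2011} — this is where condition \eqref{pq}, namely $d/p+2/q<1$, enters, since it forces $p>d$ and the embedding $W^{2,p}\hookrightarrow C^{1,\alpha}$ — the solution satisfies $u\in H^q_{2,p}(T_0)$ with $\nabla u$ bounded and, for $\lambda$ large, $\norm{\nabla u}_\infty\leq1/2$. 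Hence $\Phi_t:=\mathrm{id}+u(t,\cdot)$ is, uniformly in $t\in[0,T_0]$, a bi-Lipschitz $C^1$-diffeomorphism of $\R^d$, so that for $Y^z(t):=\Phi_t(X^z(t))$ the quantities $\norm{X^x_t-X^y_t}_\infty$ and $\norm{Y^x_t-Y^y_t}_\infty$ are comparable up to a constant, and the initial-segment difference obeys $\norm{\Phi_\cdot(x)-\Phi_\cdot(y)}_\infty\lesssim\norm{x-y}_\infty$.

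Applying It\=o's formula to $u(t,X^z(t))$ and using the PDE, the singular drift cancels and $Y^z$ solves
$$\dd{Y^z}(t)=\left[\nabla\Phi_t(X^z(t))B(t,X^z_t)+\lambda u(t,X^z(t))\right]\dd{t}+\nabla\Phi_t(X^z(t))\sigma(t,X^z(t))\dd{W}(t).$$
I would then write the equation for the difference $Z:=Y^x-Y^y$ and estimate $\E\sup_{0\leq s\leq t}\abs{Z(s)}^\gamma$ via the Burkholder--Davis--Gundy inequality. The delay drift contributes, through condition \eqref{delaydriftlip}, a term bounded by $C_B\norm{X^x_s-X^y_s}_\infty\lesssim\norm{Z_s}_\infty$, while the factor $\nabla\Phi_t=I+\nabla u$ and the sublinear bound on $B$ from condition \eqref{delaydriftc} are controlled by the exponential moment estimates of Lemma \ref{expsupX} and Lemma \ref{expfX}. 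The spatial gradient of the transformed diffusion coefficient is $\nabla^2 u\cdot\sigma+(I+\nabla u)\nabla\sigma$; its bounded part stems from the Lipschitz assumption \eqref{sigmac} on $\sigma$, but the $\nabla^2 u$ part lies only in $L^q_p(T_0)$, so the effective Lipschitz coefficient of the difference is a genuinely stochastic process $\xi(s)$ of the form (bounded)$\,+\,\abs{\nabla^2 u(s,X^x(s))}$.

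To close the estimate I would invoke the stochastic Gronwall lemma of von Renesse and Scheutzow \cite{Scheutzow,Renesse}: the time-integral $\int_0^{T_0}\xi(s)^2\dd{s}$ is exponentially integrable because $\abs{\nabla^2 u}^2\in L^{q/2}_{p/2}(T_0)$, which satisfies $2d/p+4/q<2$ precisely by \eqref{pq}, so Lemma \ref{expfX} supplies the corresponding Khasminskii-type bound along $X^x$. This yields $\E\sup_{0\leq s\leq t}\abs{Z(s)}^\gamma\leq C\norm{x-y}_\infty^\gamma$, and translating back through the uniformly bi-Lipschitz $\Phi_t$ and adding the initial-segment contribution gives the claimed estimate. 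Since all the constants produced by Lemma \ref{Krylov}, Lemma \ref{expfX} and Lemma \ref{expsupX} are uniform over $\norm{x}_\infty,\norm{y}_\infty\leq R$, the final constant $C$ depends only on the listed quantities.

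The main obstacle is the interaction of the two sources of irregularity. The singular drift forces the Lipschitz coefficient of the transformed equation to be a stochastic process (through $\nabla^2 u\in L^q_p$), which rules out a deterministic Gronwall inequality and necessitates the stochastic Gronwall lemma together with the Krylov/Khasminskii estimates; simultaneously, the path-dependent delay drift $B$ forces all estimates to be carried out in the supremum norm over the sliding window $[t-r,t]$ rather than pointwise in time. Reconciling these — feeding a supremum-norm delay term into a stochastic Gronwall argument whose coefficient is controlled only in an integrated, exponential-moment sense — is the delicate point, and it is exactly here that the uniform-in-$x$ bounds of Lemma \ref{expfX} and Lemma \ref{expsupX} are indispensable.
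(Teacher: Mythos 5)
Your proposal follows essentially the same route as the paper's proof: Zvonkin's transformation to cancel the singular drift, It\=o's formula for $H^q_{2,p}$-functions along the solution (justified by the Krylov/Khasminskii estimates of Lemmas \ref{Krylov} and \ref{expfX}), control of the resulting stochastic Lipschitz coefficient in an exponential-moment sense, and the stochastic Gronwall Lemma \ref{Gronwall}. The only differences are implementation details: the paper obtains the gradient smallness of the transformation by induction over short time intervals $T-S\leq\delta$ rather than a $\lambda$-penalty in the PDE, replaces your pointwise coefficient $\abs{\nabla^2 u(s,X^x(s))}$ by the Hardy--Littlewood maximal function evaluated at both $X^x(s)$ and $X^y(s)$ (Lemma \ref{Hardy-Littlewood}), and resolves your flagged ``delicate point'' explicitly via the multiplier $e^{-A(t)}$, which converts the stochastic coefficient into the constant-$K$ form required by Lemma \ref{Gronwall} and is then removed by Cauchy--Schwarz using the exponential integrability you identified.
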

	By Theorem \ref{PDE}, for every $0<T\leq T_0$, there exists a solution
	$$\tilde{u}(\cdot;T)\in\left(H^q_{2,p}(T_0)\right)^d$$
	of the coordinatewise PDE system
	\begin{align*}
		\partial_t\tilde{u}(t,x;T)+L_t\tilde{u}(t,x;T)+b(t,x)&=0,\\
		\tilde{u}(T,x;T)&=0
	\end{align*}
	for all $t\in[0,T]$ and $x\in\R^d$ where
	$$L_tv(t,x):=\frac{1}{2}\sum_{i,j,k=1}^{d}\sigma^{i,k}(t,x)\sigma^{j,k}(t,x)\partial_i\partial_jv(t,x)+b(t,x)\cdot\nabla v(t,x), \ v\in H^q_{2,p}(T_0).$$
	Additionally, it holds
	$$\sup\limits_{T\in[0,T_0]}\norm{\tilde{u}^i(\cdot;T)}_{H^q_{2,p}(T)}<\infty, \ i=1,\dots,d$$
	and by the embedding Theorem \ref{embedding}, there exists a uniform $\delta$ such that for all $0\leq S\leq T$ with $T-S\leq\delta$
	$$\abs{\tilde{u}(t,x;T)-\tilde{u}(t,y;T)}\leq\frac{1}{2}\abs{x-y}$$
	for all $t\in[S,T]$ and $x,y\in\R^d$. Furthermore, the function
	$$u(t,x;T):=\tilde{u}(t,x;T)+x$$
	satisfies coordinatewise the equation
	\begin{align*}
		\partial_tu(t,x;T)+L_tu(t,x;T)&=0,\\
		u(T,x;T)&=x.
	\end{align*}
	\begin{proof}
		Choose $\delta>0$ like above. By induction, it suffices to prove for every $0\leq S\leq T\leq T_0$ with $T-S\leq\delta$ the implication
		\begin{align*}
			&\E\norm{X_S^x-X_S^y}_\infty^\gamma\leq C_1\norm{x-y}_\infty^\gamma \ \forall x,y\in\Co,\norm{x}_\infty,\norm{y}_\infty\leq R\\
			\implies&\E\norm{X_T^x-X_T^y}_\infty^\gamma\leq C_2\norm{x-y}_\infty^\gamma \ \forall x,y\in\Co,\norm{x}_\infty,\norm{y}_\infty\leq R
		\end{align*}
		with some constants $C_1$ and $C_2$ depending only on $\gamma$, $d$, $p$, $q$, $T_0$, $C_\sigma$, $\norm{b}_{L^q_p(T_0)}$, $g_T$, $C_B$ and $R$. For the sake of simplicity, we write $u(\cdot):=u(\cdot;T)$. Furthermore, define
		\begin{align*}
			Y^x(t)&:=u(t,X^x(t)), \ S\leq t\leq T,\\
			Y^y(t)&:=u(t,X^y(t)), \ S\leq t\leq T.
		\end{align*}
		By the choice of $\delta$, one has for the difference processes $Z(t):=X^x(t)-X^y(t)$ and $\tilde{Z}(t):=Y^x(t)-Y^y(t)$
		$$\frac{1}{2}\abs{\tilde{Z}(t)}\leq\abs{Z(t)}\leq\frac{3}{2}\abs{\tilde{Z}(t)}, \ S\leq t\leq T.$$
		Due to Lemma \ref{expfX}, Lemma \ref{Ito} is applicable, which gives
		\begin{align*}
			\tilde{Z}(t)=&\int_{S}^{t}\left(Du(s,X^x(s))B(s,X^x_s)-D u(s,X^y(s))B(s,X^y_s)\right)\dd{s}\\
			&+\int_{S}^{t}\left(Du(s,X^x(s))\sigma(s,X^x(s))-D u(s,X^y(s))\sigma(s,X^y(s))\right)\dd{W}(s)
		\end{align*}
		and consequently
		\begin{align*}
			&\dd{\abs{\tilde{Z}}^{2\gamma}}(t)\\
			=&2\gamma\abs{\tilde{Z}(t)}^{2\gamma-2}\tilde{Z}(t)^\top\left(Du(t,X^x(t))B(t,X^x_t)-Du(t,X^y(t))B(t,X^y_t)\right)\dd{t}\\
			&+2\gamma\abs{\tilde{Z}(t)}^{2\gamma-2}\tilde{Z}(t)^\top\left(Du(t,X^x(t))\sigma(t,X^x(t))-Du(t,X^y(t))\sigma(t,X^y(t))\right)\dd{W}(t)\\
			&+\gamma\abs{\tilde{Z}(t)}^{2\gamma-2}\norm{Du(t,X^x(t))\sigma(t,X^x(t))-Du(t,X^y(t))\sigma(t,X^y(t))}_{HS}^2\dd{t}\\
			&+2\gamma(\gamma-1)\abs{\tilde{Z}(t)}^{2\gamma-4}\\
			&\times\abs{\left(Du(t,X^x(t))\sigma(t,X^x(t))-Du(t,X^y(t))\sigma(t,X^y(t))\right)^\top\tilde{Z}(t)}^2\dd{t}.
		\end{align*}
		Using the boundedness of $Du$ and condition \eqref{delaydriftc} gives for $S\leq t_1\leq t_2\leq T$
		\begin{align*}
			&\abs{\tilde{Z}(t_2)}^{2\gamma}-\abs{\tilde{Z}(t_1)}^{2\gamma}\\
			\leq&c\int_{t_1}^{t_2}\norm{\tilde{Z}_s}_\infty^{2\gamma}\dd{s}\\
			+&c\int_{t_1}^{t_2}\abs{\tilde{Z}(s)}^{2\gamma-1}\norm{Du(s,X^x(s))-Du(s,X^y(s))}_{op}\abs{B(s,X^x_s)}\dd{s}\\
			+&c\int_{t_1}^{t_2}\abs{\tilde{Z}(s)}^{2\gamma-2}\tilde{Z}(s)^\top\left(Du(s,X^x(s))\sigma(s,X^x(s))-Du(s,X^y(s))\sigma(s,X^y(s))\right)\dd{W}(s)\\
			+&c\int_{t_1}^{t_2}\abs{\tilde{Z}(s)}^{2\gamma-2}\norm{Du(s,X^x(s))\sigma(s,X^x(s))-Du(s,X^y(s))\sigma(s,X^y(s))}_{HS}^2\dd{s}\\
			=&I_1+I_2+I_3+I_4
		\end{align*}
		where $c>0$ is a constant depending only on $\gamma$, $d$, $p$, $q$, $T_0$, $C_\sigma$, $\norm{b}_{L^q_p(T_0)}$, $g_T$, $C_B$ and $R$. The idea is to apply the stochastic Gronwall Lemma \ref{Gronwall}. To get rid of the badly behaving terms $I_2$ and $I_4$, one can use a suitable multiplier of the form $e^{-A(t)}$ - as in \cite{Fedrizzi2} - where $A$ is an adapted, continuous process. Here, we choose
		\begin{align*}
			A(t)&:=c\int_{S}^{t}\abs{B(s,X^x_s)}\frac{\norm{Du(s,X^x(s))-Du(s,X^y(s))}_{op}}{\abs{\tilde{Z}(s)}}\1_{\tilde{Z}(s)\neq0}\dd{s}\\
			&+c\int_{S}^{t}\frac{\norm{Du(s,X^x(s))\sigma(s,X^x(s))-Du(s,X^y(s))\sigma(s,X^y(s))}_{HS}^2}{\abs{\tilde{Z}(s)}^2}\1_{\tilde{Z}(s)\neq0}\dd{s}
		\end{align*}
		for $S\leq t\leq T$.
		To show that $A$ is indeed well defined, it suffices to show the existence of a constant $\hat{C}=\hat{C}(\gamma,d,p,q,C_\sigma,T_0,\norm{b}_{L^q_p(T_0)},g_T,C_B,R)\geq0$ such that
		$$\E \exp\left(\frac{1}{2}A(T)\right)\leq \hat{C}.$$
		Since $u$ belongs coordinatewise to $ H^q_{2,p}(T_0)$ and by condition \eqref{sigmac}, it holds
		$$(Du\cdot\sigma)^{i,j}\in L^q\left(T_0;W^{1,p}\left(\R^d\right)\right), \ i,j=1,\dots,d.$$
		Additionally, $C^\infty_c\left(\R^{d+1}\right)$ is dense in $L^q\left(T_0;W^{1,p}\left(\R^d\right)\right)$. Hence, by Young's inequality, Lemma \ref{expfX} and Lemma \ref{expsupX}, it suffices to show for all $\tilde{R}>0$ the existence of a constant $C_{\tilde{R}}=C_{\tilde{R}}(d,p,q,C_\sigma,T_0,\norm{b}_{L^q_p(T_0)},g_T,C_B,R)$ such that
		$$\E\exp\left(\int_{S}^{T}\frac{\abs{f(s,X^x(s))- f(s,X^y(s))}^2}{\abs{\tilde{Z}(s)}^2}\1_{\tilde{Z}(s)\neq0}\dd{s}\right)\leq C_{\tilde{R}}$$ 
		for all $f\in C^\infty\left(\R^{d+1}\right)$ with $\norm{f}_{L^q\left(T_0;W^{1,p}\left(\R^d\right)\right)}\leq\tilde{R}$. By Lemmas \ref{expfX} and \ref{Hardy-Littlewood}, one obtains
		\begin{align*}
			&\E\exp\left(\int_{S}^{T}\frac{\abs{f(s,X^x(s))- f(s,X^y(s))}^2}{\abs{\tilde{Z}(s)}^2}\1_{\tilde{Z}(s)\neq0}\dd{s}\right)\\
			\leq&\E\exp\left(C_d^2\int_{S}^{T}\left(\mathcal{M}\abs{\nabla f}(X^x(s))+\mathcal{M}\abs{\nabla f}(X^y(s))\right)^2\dd{s}\right)\\
			\leq&C_{\tilde{R}}
		\end{align*}
		where $C_{\tilde{R}}=C_{\tilde{R}}(d,p,q,C_\sigma,T_0,\norm{b}_{L^q_p(T_0)},g_T,C_B,R)$. By the It\=o formula, it holds
		$$e^{-A(t)}\abs{\tilde{Z}(t)}^{2\gamma}\leq\abs{\tilde{Z}(S)}^{2\gamma}+ c\int_{S}^{t}e^{-A(s)}\norm{\tilde{Z}_s}_\infty^{2\gamma}\dd{s}+\text{local martingale}.$$
		Applying the stochastic Gronwall Lemma \ref{Gronwall} gives
		$$\E\left[\sup_{S\leq t\leq T}e^{-\frac{1}{2}A(t)}\abs{\tilde{Z}(t)}^\gamma\right]\leq \tilde{C}\E\norm{\tilde{Z}_S}_\infty^\gamma\leq\tilde{C}C_1\norm{x-y}_\infty^\gamma$$
		for a constant $\tilde{C}=\tilde{C}(\gamma,d,p,q,C_\sigma,T_0,\norm{b}_{L^q_p(T_0)},g_T,C_B,R)$. Due to the estimates from above, the Cauchy-Schwarz inequality and by redefining $\gamma:=2\gamma$, one finally obtains
		\begin{align*}
			&\E\left[\sup_{S\leq t\leq T}\abs{Z(t)}^\gamma\right]\\
			\leq&\left(\E e^{\frac{1}{2}A(T)}\right)^\frac{1}{2}\left[\E\left(\sup_{S\leq t\leq T}e^{-\frac{1}{2}A(t)}\abs{Z(t)}^{2\gamma}\right)\right]^{\frac{1}{2}}\\
			\leq&C_2\norm{x-y}_\infty^\gamma
		\end{align*}
		for some constant $C_2=C_2(\gamma,d,p,q,C_\sigma,T_0,\norm{b}_{L^q_p(T_0)},g_T,C_B,R)$.
	\end{proof}
	\subsection{Strong Feller Property}
	
	The following theorem is a consequence of a log-Harnack inequality that has been shown in \cite{WangYuan} and requires the Lipschitz-continuity of $\sigma$ in space.
	\begin{Theorem}
		\label{martfeller}
		Assume condition \eqref{sigmac}. Then one has for all $t>r$
		$$\lim\limits_{y\to x}\E f(M_t^y)=\E f(M^x_t) \ \forall f\in B_b(\Co).$$
	\end{Theorem}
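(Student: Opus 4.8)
The plan is to deduce the strong Feller property on the segment space $\Co$ from a log-Harnack inequality, following \cite{WangYuan}. The structural fact that makes this setting tractable is that $\sigma$ depends only on the current state and not on the past: hence for $t\geq0$ the process $M^x(t)$ solves the ordinary (non-delayed) $d$-dimensional SDE $\dd{M^x}(t)=\sigma(t,M^x(t))\dd{W}(t)$ started at $M^x(0)=x(0)$, and its evolution on $[0,\infty)$ depends on the initial datum only through $x(0)$. Consequently, for $t>r$ the segment $M^x_t=(M^x(t+s))_{s\in[-r,0]}$ involves only times in $[t-r,t]\subset(0,\infty)$, so the transition law $\Prob_{M^x_t}$ on $\Co$ depends on $x$ exclusively through $x(0)$. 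This is precisely the framework treated by Wang and Yuan, whose diffusion coefficient does not depend on the past.

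For this framework I would invoke the log-Harnack inequality of \cite{WangYuan}, whose hypotheses are supplied by condition \eqref{sigmac}: uniform ellipticity and boundedness (part 1) together with spatial Lipschitz continuity (part 2) are exactly what their argument requires. For $t>r$ it yields, for every strictly positive $f\in B_b(\Co)$,
\[
\E\log f(M^y_t)\leq\log\E f(M^x_t)+C(t)\norm{x-y}_\infty^2,
\]
with a finite constant $C(t)$.

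From here the argument is standard. Writing $\mu_z:=\Prob_{M^z_t}$ for the transition law on $\Co$, the log-Harnack inequality is equivalent to the relative-entropy estimate $\mathrm{Ent}(\mu_y\,|\,\mu_x)\leq C(t)\norm{x-y}_\infty^2$, and Pinsker's inequality then gives a total variation bound
\[
\norm{\mu_x-\mu_y}_{TV}\leq C'(t)\norm{x-y}_\infty
\]
with $C'(t)$ depending only on $C(t)$. Since the right-hand side tends to $0$ as $y\to x$ in $\Co$, for any $f\in B_b(\Co)$ one obtains
\[
\abs{\E f(M^y_t)-\E f(M^x_t)}\leq\norm{f}_\infty\,\norm{\mu_x-\mu_y}_{TV}\longrightarrow0\quad(y\to x),
\]
which is the claimed strong Feller property; note that the total variation route automatically upgrades the conclusion from positive $f$ (required in the log-Harnack inequality) to all bounded measurable $f$.

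The main obstacle is not this final implication but making the reduction to \cite{WangYuan} rigorous: one must verify that their standing assumptions are genuinely implied by condition \eqref{sigmac}, and that the role of the restriction $t>r$ — which guarantees that the whole segment lies in strictly positive times, so that no dependence on the initial path beyond $x(0)$ survives — matches the range of validity of their inequality. Once the hypotheses are aligned, the log-Harnack inequality can be quoted directly and the remaining steps are routine.
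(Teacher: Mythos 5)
Your proposal takes essentially the same route as the paper: the paper proves Theorem \ref{martfeller} simply by citing the log-Harnack inequality of Wang and Yuan \cite{WangYuan}, noting that condition \eqref{sigmac} (uniform ellipticity plus spatial Lipschitz continuity of $\sigma$, with no dependence on the past) is exactly what their result requires for $t>r$. The entropy--Pinsker deduction you spell out, passing from the log-Harnack inequality to a total variation bound and hence to the strong Feller property for all of $B_b(\Co)$, is the standard implication that the paper leaves implicit in the citation, so there is no gap.
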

	Although, the drift-free equation has no delay, we need the strong Feller property with respect to the state space of path segments $\Co$. For Theorem \ref{martfeller}, we do not know so far whether one can weaken the Lipschitz condition on $\sigma$ to the ``usual'' assumption
	$$\abs{\nabla_x\sigma^{i,j}}\in L^q_{loc}\left([0,T];L^p\left(\R^d\right)\right), \ i,j=1,\dots,d.$$
	Then the following results would still hold even with that weaker assumption.
	\begin{Lemma}
		\label{bconv}
		Assume condition \eqref{sigmac}. Then one has
		$$\lim\limits_{y\to x}\E\int_{0}^{T}\abs{b(t,M^x(t))-b(t,M^y(t))}^2\dd{t}=0.$$
	\end{Lemma}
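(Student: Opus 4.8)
The plan is to reduce the claim to a convergence that can be handled by approximating the singular drift $b$ by smooth coefficients, exactly in the spirit of the toy computation of subsection \ref{Method}. Fix $T>0$. The only structural input needed from the drift-free dynamics is its stability: since $\sigma$ is bounded and Lipschitz in space by condition \eqref{sigmac}, a standard It\=o--Gronwall argument (Burkholder--Davis--Gundy for the martingale part, then Gronwall) yields
$$\E\sup_{0\le t\le T}\abs{M^x(t)-M^y(t)}^2\le C_T\abs{x(0)-y(0)}^2\le C_T\norm{x-y}_\infty^2,$$
so that $M^y\to M^x$ in probability, uniformly on $[0,T]$, as $y\to x$. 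This uses nothing beyond the Lipschitz bound, which is why the conclusion would survive under the weaker Sobolev hypothesis on $\sigma$ discussed above.

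Given $\eta>0$, I would then choose $b_\varepsilon\in C_c^\infty([0,T]\times\R^d)$ with $\norm{b-b_\varepsilon}_{L^q_p(T)}$ small (possible by density) and split, using $(a+b+c)^2\le3(a^2+b^2+c^2)$,
\begin{align*}
\E\int_0^T\abs{b(t,M^x(t))-b(t,M^y(t))}^2\dd{t}
&\le 3\,\E\int_0^T\abs{(b-b_\varepsilon)(t,M^x(t))}^2\dd{t}\\
&\quad+3\,\E\int_0^T\abs{b_\varepsilon(t,M^x(t))-b_\varepsilon(t,M^y(t))}^2\dd{t}\\
&\quad+3\,\E\int_0^T\abs{(b-b_\varepsilon)(t,M^y(t))}^2\dd{t}.
\end{align*}

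For the two outer ``tail'' terms I would apply the Krylov-type bound of Lemma \ref{Krylov} to the nonnegative function $\abs{b-b_\varepsilon}^2$. Since $b-b_\varepsilon\in L^q_p(T)$ one has $\abs{b-b_\varepsilon}^2\in L^{q/2}_{p/2}(T)$ with $\norm{\abs{b-b_\varepsilon}^2}_{L^{q/2}_{p/2}(T)}=\norm{b-b_\varepsilon}_{L^q_p(T)}^2$, and the exponents are admissible because $\frac{d}{p/2}+\frac{2}{q/2}=2\bigl(\frac dp+\frac2q\bigr)<2$ by \eqref{pq} (this is the same admissibility already exploited in the proof of Theorem \ref{ExUni}). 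Hence Lemma \ref{Krylov} gives, with $z$ either $x$ or $y$,
$$\E\int_0^T\abs{(b-b_\varepsilon)(t,M^z(t))}^2\dd{t}\le C\,\norm{b-b_\varepsilon}_{L^q_p(T)}^2,$$
where crucially $C$ is independent of the initial value. For the middle term, $b_\varepsilon$ is bounded and uniformly continuous, so along any subsequence on which $M^y\to M^x$ uniformly almost surely one gets $\int_0^T\abs{b_\varepsilon(t,M^x(t))-b_\varepsilon(t,M^y(t))}^2\dd{t}\to0$ almost surely, dominated by $4T\norm{b_\varepsilon}_\infty^2$; dominated convergence together with the subsequence principle then forces this term to $0$ as $y\to x$ for each fixed $\varepsilon$. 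Combining the three estimates yields $\limsup_{y\to x}\E\int_0^T\abs{b(t,M^x(t))-b(t,M^y(t))}^2\dd{t}\le 6C\,\norm{b-b_\varepsilon}_{L^q_p(T)}^2$, and letting $\varepsilon\to0$ completes the proof.

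The main obstacle is the singularity of $b$: because $z\mapsto b(t,z)$ is only $L^p$ and genuinely discontinuous, one cannot pass to the limit directly. The argument splits this difficulty into a smooth part, disposed of by plain stability of $M$ and dominated convergence, and a rough remainder whose contribution along the random trajectory must be controlled \emph{uniformly in the starting point}; it is exactly the Krylov--Khasminskii estimate of Lemma \ref{Krylov}, combined with the fact that squaring keeps the exponents inside the admissible regime guaranteed by \eqref{pq}, that supplies this uniform control. The only genuinely delicate step is therefore ensuring the tail bound is uniform in the initial value, which is precisely the content of Lemma \ref{Krylov}.
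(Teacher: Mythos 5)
Your proof is correct, but it takes a genuinely different route from the paper's. The paper never approximates $b$: it first combines Theorem \ref{martfeller} (the log-Harnack-based strong Feller property of $M$) with Theorem \ref{goodconv} to get $\lim_{y\to x}\E\abs{f(M^x(t))-f(M^y(t))}=0$ for every \emph{bounded measurable} $f$, deduces from this that $b(\cdot,M^y(\cdot))\to b(\cdot,M^x(\cdot))$ in measure with respect to $\Prob\otimes\lambda_{|[0,T]}$ despite the discontinuity of $b$, and then upgrades to $L^2$-convergence by a Vitali-type argument: Lemma \ref{Krylov} gives $\lim_{\alpha\to\infty}\sup_{y\in\Co}\E\int_0^T\1_{\abs{b(t,M^y(t))}\geq\alpha}\abs{b(t,M^y(t))}^2\dd{t}=0$, i.e.\ uniform integrability of the squared compositions. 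You instead bypass the strong Feller property entirely, using only pathwise stability of $M$ (It\=o--Gronwall), density of smooth compactly supported functions in $L^q_p(T)$, and the initial-value-uniform Krylov bound applied to $\abs{b-b_\varepsilon}^2$; this is the classical Krylov--R\"ockner/Zhang-style argument. Note that both proofs rest on the same exponent bookkeeping, namely that $\abs{g}^2\in L^{q/2}_{p/2}(T)$ is admissible for Lemma \ref{Krylov}; besides $2\left(\frac{d}{p}+\frac{2}{q}\right)<2$ this tacitly needs $p/2,q/2>1$ (automatic for $q$, and for $p$ when $d\geq2$), an assumption the paper itself makes implicitly in Theorem \ref{ExUni} and in its uniform-integrability step, so this is a shared caveat rather than a gap of yours. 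What your route buys: it is more elementary and self-contained, and it makes transparent that Lemma \ref{bconv} needs only stability and the Krylov estimate, so it would indeed survive the weakening of the Lipschitz hypothesis on $\sigma$ discussed after Theorem \ref{martfeller}, whereas the paper's proof as written is hostage to that theorem. What the paper's route buys: it recycles the abstract convergence machinery that is the centerpiece of the paper, avoids any approximation of $b$, and its uniform-integrability formulation isolates exactly the property of $b$ that is needed.
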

	\begin{proof}
		By Theorems \ref{goodconv} and \ref{martfeller}, one has for all $t>0$
		$$\lim\limits_{y\to x}\E\abs{f(M^x(t))-f(M^y(t))}=0 \ \forall f\in B_b(\R^d).$$
		Therefore, one has for all $f\in B_b\left([0,T]\times\R^d\right)$
		$$\lim\limits_{y\to x}\E\int_{0}^{T}\abs{f(t,M^x(t))-f(t,M^y(t))}\dd{t}=0.$$
		Consequently, $b(\cdot,M^{x_n}(\cdot))$ converges to $b(\cdot,M^x(\cdot))$ in measure with respect to $\Prob\otimes\lambda_{|[0,T]}$ for each sequence $(x_n)_{n\in\N}\subset\Co$ converging to $x$. By Lemma \ref{Krylov}, it follows
		$$\lim\limits_{\alpha\to\infty}\sup_{y\in\Co}\E\int_{0}^{T}\1_{\abs{b(t,M^y(t))}\geq\alpha}\abs{b(t,M^y(t))}^2\dd{t}=0.$$
		Hence, $\left\{\abs{b(t,M^y(t))}^2:y\in\Co\right\}$ is uniformly integrable and the stated $L^2$-convergence follows.
	\end{proof}
	\begin{proof}[Proof of Theorem \ref{StrongFeller}]
		Let $t>r$. Due to Theorems \ref{goodconv} and \ref{stab}, it suffices to show that
		$$\lim\limits_{y\to x}\E_{\mathds{Q}^y}f(X^y_t)=\E_{\mathds{Q}^x}f(X^x_t) \ \forall f\in B_b(\Co).$$
		Let $f\in B_b(\Co)$, then one has by Theorem \ref{ExUni}
		\begin{align*}
			&\E_{\mathds{Q}^x}f(X_t^x)-\E_{\mathds{Q}^y}f(X_t^y)\\
			=&\E_\Prob(D^x(t)f(M_t^x))-\E_\Prob(D^y(t)f(M_t^y))\\
			=&\E_\Prob[D^x(t)(f(M_t^x)-f(M_t^y))]+\E_\Prob[(D^x(t)-D^y(t))f(M_t^y)]\\
			\leq&\E_\Prob[D^x(t)(f(M_t^x)-f(M_t^y))]+\norm{f}_\infty\E_\Prob\abs{D^x(t)-D^y(t)}
		\end{align*}
		where we define for every $z\in\Co$
		\begin{align*}
			a^z(t)&:=\sigma(t,M^z(t))^{-1}\left(B(t,M_t^z)+b(t,M^z(t))\right),\\
			D^z(t)&:=\exp\left(\int_{0}^{t}a^z(s)^\top\dd{W}(s)-\frac{1}{2}\int_{0}^{t}\abs{a^z(s)}^2\dd{s}\right).
		\end{align*}
		By condition \eqref{sigmac}, It\=o's formula and the stochastic Gronwall Lemma \ref{Gronwall}, it holds
		$$\lim\limits_{n\to\infty}M^{x_n}_t=M^x_t\text{ in probability}$$
		for each sequence $(x_n)_{n\in\N}\subset\Co$ converging to $x$. Applying Theorems \ref{goodconv} and \ref{martfeller}, gives
		$$\lim\limits_{y\to x}\E_\Prob\abs{f(M_t^y)-f(M_t^x)}=0$$
		and in particular,
		$$\lim\limits_{n\to\infty}D^x(t)f(M_t^{x_n})=D^x(t)f(M_t^x)\text{ in probability}$$
		for each sequence $(x_n)_{n\in\N}\subset\Co$ converging to $x$. By the dominated convergence theorem, it follows
		$$\lim\limits_{y\to x}\E_\Prob[D^x(t)(f(M_t^y)-f(M_t^x))]=0.$$
		Consequently, it remains to show that
		$$\lim\limits_{y\to x}\E_\Prob\abs{D^y(t)-D^x(t)}=0.$$
		Since one has $\E_\Prob D^z(t)=1$ for all $z\in\Co$, it suffices to show for each sequence $(x_n)_{n\in\N}\subset\Co$ converging to $x$
		$$\lim\limits_{n\to\infty}D^{x_n}(t)=D^x(t)\text{ in probability}.$$
		This can be seen as follows: then by Fatou's lemma,
		$$2-\lim\limits_{n\to\infty}\E_\Prob\abs{D^{x_n}(t)-D^x(t)}=\lim\limits_{n\to\infty}\E_\Prob\left(D^x(t)+D^{x_n}(t)-\abs{D^{x_n}(t)-D^x(t)}\right)\geq2$$
		holds and the $L^1$-convergence would be an immediate consequence. Therefore, it is sufficient to show
		$$\lim\limits_{y\to x}\E_\Prob\int_{0}^{t}\abs{a^y(s)-a^x(s)}^2\dd{s}=0$$
		by the martingale isometry. One has
		\begin{align*}
			&\E_\Prob\int_{0}^{t}\abs{a^y(s)-a^x(s)}^2\dd{s}\\
			\leq&2\E_\Prob\int_{0}^{t}\norm{\sigma\left(s,M^y(s)\right)^{-1}-\sigma\left(s,M^x(s)\right)^{-1}}^2_{op}\abs{B(s,M^x_s)+b(t,M^x(s))}^2\dd{s}\\
			&+2C_\sigma\E_\Prob\int_{0}^{t}\abs{B(s,M^y_s)+b(s,M^y(s))-B(s,M^x_s)-b(s,M^x(s))}^2\dd{t}.
		\end{align*}
		The second term converges to zero by condition \eqref{delaydriftc}, Theorem \ref{goodconv}, Lemma \ref{expsupM}, Theorem \ref{martfeller} and Lemma \ref{bconv}. Moreover, for each sequence $(x_n)_{n\in\N}\subset\Co$ converging to $x$,
		$$\lim\limits_{n\to\infty}\norm{\sigma\left(\cdot,M^{x_n}(\cdot)\right)^{-1}-\sigma\left(\cdot,M^x(\cdot)\right)^{-1}}_{op}=0\text{ in measure w.r.t. }\Prob\otimes\lambda_{|[0,t]}$$
		holds by Theorem \ref{stab}, the continuity of $\sigma$ in space and the continuity of the inverting map $A\mapsto A^{-1}$ on the space of invertible matrices. Additionally, one can bound the first integrand by
		$$2C_\sigma\abs{B(\cdot,M^x_\cdot)+b(\cdot,M^x(\cdot))}^2,$$
		which is $\Prob\otimes\lambda_{|[0,t]}$-integrable by Lemma \ref{Krylov}. Consequently, one can apply the dominated convergence theorem and the proof is complete.
	\end{proof}
	\appendix
	\section{Appendix}
	\begin{Theorem}
		\label{PDE}
		Assume conditions \eqref{singulardriftc} and \eqref{sigmac}. Then for any $T>0$ and $f\in L^q_p(T)$, there exists a unique solution $u\in H^q_{2,p}(T)$ of the following PDE
		\begin{align*}
		\partial_tu(t,x)+L_tu(t,x)+f(t,x)&=0,\\
		u(T,x)&=0
		\end{align*}
		with the bound
		$$\norm{u}_{H^q_{2,p}(S,T)}\leq C\norm{f}_{L^q_p(S,T)}$$
		for any $S\in[0,T]$ and some constant $C=C(T,C_\sigma,p,q,\norm{b}_{L^q_p(T)})>0$.
	\end{Theorem}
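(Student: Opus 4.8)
The plan is to treat the equation as a perturbation of its drift-free principal part. Write $a^{ij}(t,x):=\frac12\sum_k\sigma^{i,k}(t,x)\sigma^{j,k}(t,x)$, so that $L_t=\sum_{i,j}a^{ij}\partial_i\partial_j+b\cdot\nabla$. By condition \eqref{sigmac} the matrix $a=\frac12\sigma\sigma^\top$ is bounded, uniformly elliptic, and globally Lipschitz (hence uniformly continuous) in $x$, while the first-order coefficient $b$ lies only in $L^q_p(T)$. The singular term $b\cdot\nabla u$ will be absorbed by exploiting, on short time intervals, both the absolute continuity of $t\mapsto\norm{b(t,\cdot)}_{L^p}^q$ and the parabolic embedding granted by the strict inequality \eqref{pq}. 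As a first step I would solve, for given $g\in L^q_p(T)$, the drift-free backward problem $\partial_t w+\sum_{i,j}a^{ij}\partial_i\partial_j w+g=0$, $w(T)=0$. Since $a$ is bounded, uniformly elliptic and uniformly continuous in space, the classical $L^q$–$L^p$ maximal regularity theory for non-divergence form parabolic operators (freezing of coefficients together with the Calder\'on--Zygmund estimate; see \cite{KrylovRoeckner,Zhang2011}) yields a unique $w\in H^q_{2,p}(T)$ with $\norm{w}_{H^q_{2,p}(S,T)}\le C_0\norm{g}_{L^q_p(S,T)}$ for every $S\in[0,T]$, with $C_0=C_0(T,C_\sigma,p,q)$ independent of $S$. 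This defines a bounded solution operator $\mathcal{R}\colon g\mapsto w$.

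Solving the full equation is then equivalent to finding a fixed point of $u\mapsto\mathcal{R}(b\cdot\nabla u+f)$. By the embedding Theorem \ref{embedding}, which is available precisely because \eqref{pq} holds, every $v\in H^q_{2,p}(S,T)$ vanishing at the right endpoint of its interval satisfies $\norm{\nabla v}_{L^\infty([S,T]\times\R^d)}\le C_e\norm{v}_{H^q_{2,p}(S,T)}$ with a constant $C_e$ that does not blow up as $T-S\to0$. Since the affine map $u\mapsto\mathcal{R}(b\cdot\nabla u+f)$ has linear part $\mathcal{R}\circ(b\cdot\nabla)$ and the difference of two candidates vanishes at the right endpoint, one gets
$$\norm{\mathcal{R}(b\cdot\nabla(u_1-u_2))}_{H^q_{2,p}(S,T)}\le C_0\norm{b}_{L^q_p(S,T)}\norm{\nabla(u_1-u_2)}_{L^\infty}\le C_0C_e\norm{b}_{L^q_p(S,T)}\norm{u_1-u_2}_{H^q_{2,p}(S,T)}.$$
Because $t\mapsto\norm{b(t,\cdot)}_{L^p}^q$ is integrable, there is a $\delta>0$ with $C_0C_e\norm{b}_{L^q_p(S,T)}\le\frac12$ whenever $T-S\le\delta$, so on such an interval the map is a $\frac12$-contraction and Banach's fixed point theorem gives a unique local solution.

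To globalize, I would partition $[0,T]$ into finitely many intervals of length at most $\delta$ and solve backward in time starting from the rightmost one (zero terminal datum); on each subsequent subinterval I would run the analogous contraction for the principal operator carrying the terminal datum inherited from the interval to its right, which lies in the correct trace space. Concatenating the pieces yields $u\in H^q_{2,p}(T)$, and uniqueness follows from the local contraction together with linearity. For the stated bound, applying $u=\mathcal{R}(b\cdot\nabla u+f)$ on a short interval with zero terminal data gives $\norm{u}_{H^q_{2,p}(S,T)}\le\frac12\norm{u}_{H^q_{2,p}(S,T)}+C_0\norm{f}_{L^q_p(S,T)}$, hence $\norm{u}_{H^q_{2,p}(S,T)}\le2C_0\norm{f}_{L^q_p(S,T)}$; iterating this estimate backward across the finitely many subintervals, now carrying the nonzero terminal data controlled by the value of $u$ at the right endpoints, produces the global bound with constant $C=C(T,C_\sigma,p,q,\norm{b}_{L^q_p(T)})$.

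The main obstacle is two-fold, and both difficulties are tied to the critical scaling \eqref{pq}. First, securing the non-divergence-form maximal regularity of the principal step with merely uniformly continuous leading coefficients is the genuine technical workhorse; I would cite it rather than reprove it. Second, one must verify that the embedding constant $C_e$ controlling $\norm{\nabla v}_{L^\infty}$ stays uniformly bounded as the interval shrinks, which is exactly what lets the contraction succeed without the constant degenerating. This uniformity relies crucially on the strict inequality in \eqref{pq} together with the vanishing terminal condition (e.g.\ by extending by zero past the right endpoint and invoking the full-line parabolic embedding); verifying it carefully is where I expect the real work to lie.
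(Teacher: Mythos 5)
The paper gives no proof of this theorem at all --- its ``proof'' is the single line ``See \cite{Zhang2011}'' --- so the only fair comparison is with the argument behind that citation, which is essentially what you have reconstructed: mixed-norm maximal regularity for the drift-free non-divergence operator (valid since $\sigma$ is uniformly elliptic and Lipschitz, hence uniformly continuous, in space; this is the part one cites rather than reproves, in your write-up as in the paper), absorption of the singular term $b\cdot\nabla u$ by a short-time contraction using the gradient embedding available under \eqref{pq} together with the absolute continuity of $t\mapsto\norm{b(t,\cdot)}_{L^p}^q$, and a finite backward-in-time iteration to globalize. Your sketch is correct and matches that route, and you rightly isolate the one genuinely delicate point: the constant in Theorem \ref{embedding} carries a factor $T^{-1/q}$ and so degenerates on short intervals unless one exploits the vanishing right-endpoint datum of the relevant differences to extend by zero to an interval of fixed length and apply the embedding there --- which is exactly the standard fix.
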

	\begin{proof}
		See \cite{Zhang2011}.
	\end{proof}
	\begin{Theorem}
		\label{embedding}
		Let $p,q\in(1,\infty)$, $T>0$ and $u\in H^q_{2,p}(T)$.
		\begin{enumerate}
			\item If $\frac{d}{p}+\frac{2}{q}<2$, then $u$ is a bounded H\"older continuous function on $[0,T]\times\R^d$ and for any $0<\varepsilon$, $\delta\leq1$ satisfying
			$$\varepsilon+\frac{d}{p}+\frac{2}{q}<2, \ \ 2\delta+\frac{d}{p}+\frac{2}{q}<2,$$
			there exists a constant $N=N(p,q,\varepsilon,\delta)$ such that
			\begin{align*}
			\abs{u(t,x)-u(s,x)}&\leq N\abs{t-s}^\delta\norm{u}^{1-\frac{1}{q}-\delta}_{\mathds{H}^q_{2,p}(T)}\norm{\partial_tu}^{\frac{1}{q}+\delta}_{L^q_p(T)},\\
			\abs{u(t,x)}+\frac{\abs{u(t,x)-u(t,y)}}{\abs{x-y}^\varepsilon}&\leq NT^{-\frac{1}{q}}\left(\norm{u}_{\mathds{H}^q_{2,p}(T)}+T\norm{\partial_tu}_{L^q_p(T)}\right)
			\end{align*}
			for all $s,t\in[0,T]$ and $x,y\in\R^d,x\neq y$.
			\item If $\frac{d}{p}+\frac{2}{q}<1$, then $\nabla u$ is a bounded H\"older continuous function on $[0,T]\times\R^d$ and for any $\varepsilon\in(0,1)$ satisfying
			$$\varepsilon+\frac{d}{p}+\frac{2}{q}<1,$$
			there exists a constant $N=N(p,q,\varepsilon)$ such that
			\begin{align*}
			\abs{\nabla u(t,x)-\nabla u(s,x)}&\leq N\abs{t-s}^\delta\norm{ u}^{1-\frac{1}{q}-\frac{\varepsilon}{2}}_{\mathds{H}^q_{2,p}(T)}\norm{\partial_tu}^{\frac{1}{q}+\frac{\varepsilon}{2}}_{L^q_p(T)},\\
			\abs{\nabla u(t,x)}+\frac{\abs{\nabla u(t,x)-\nabla u(t,y)}}{\abs{x-y}^\varepsilon}&\leq NT^{-\frac{1}{q}}\left(\norm{u}_{\mathds{H}^q_{2,p}(T)}+T\norm{\partial_tu}_{L^q_p(T)}\right)
			\end{align*}
			for all $s,t\in[0,T]$ and $x,y\in\R^d,x\neq y$.
		\end{enumerate}
	\end{Theorem}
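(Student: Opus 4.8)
The plan is to prove this as a parabolic (anisotropic) Sobolev embedding, decoupling the spatial H\"older regularity from the continuity in time. The starting point is the observation that for $u\in H^q_{2,p}(T)$ the function $f:=\partial_tu-\Delta u$ lies in $L^q_p(T)$ with $\norm{f}_{L^q_p(T)}\lesssim\norm{u}_{\mathds{H}^q_{2,p}(T)}+\norm{\partial_tu}_{L^q_p(T)}$, since $\Delta u$ is controlled by the $\mathds{H}^q_{2,p}$-norm. After extending $u$ in the time variable to all of $\R$ by a bounded one-dimensional Sobolev extension operator (which exists because the time regularity is $W^{1,q}$), I would represent $u$ by Duhamel's formula against the Gaussian heat semigroup $P_\tau=e^{\tau\Delta}$, so that $\nabla^ku(t,\cdot)=\int_{-\infty}^t\nabla^kP_{t-s}f(s)\,ds$ with no boundary contributions.

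For the spatial estimates (the second inequality in each part, with $k=0$ for part~1 and $k=1$ for part~2) I would estimate the increment $\nabla^ku(t,x)-\nabla^ku(t,y)$ by pulling the difference onto the kernel and applying H\"older's inequality in space (exponents $p,p'$) and in time (exponents $q,q'$). This reduces everything to the scalar integral $\left(\int_0^\infty\norm{\nabla^kp_\tau(x-\cdot)-\nabla^kp_\tau(y-\cdot)}_{L^{p'}}^{q'}\,d\tau\right)^{1/q'}$. Routine Gaussian kernel bounds give $\norm{\nabla^kp_\tau(x-\cdot)-\nabla^kp_\tau(y-\cdot)}_{L^{p'}}\lesssim\abs{x-y}^\varepsilon\,\tau^{-\varepsilon/2-k/2-\frac{d}{2p}}$ in the relevant regime, and the time integral converges precisely when $\varepsilon+k+\frac{d}{p}<2-\frac{2}{q}$, i.e. under $\varepsilon+\frac{d}{p}+\frac{2}{q}<2$ for $k=0$ and under $\varepsilon+\frac{d}{p}+\frac{2}{q}<1$ for $k=1$. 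These are exactly the hypotheses, and the boundedness of $\nabla^ku$ follows in the same way, omitting the difference and using $\norm{\nabla^kp_\tau(x-\cdot)}_{L^{p'}}$ directly. The scaling of the Gaussian then reproduces the factor $T^{-1/q}$ and the homogeneity in $\norm{u}_{\mathds{H}^q_{2,p}(T)}+T\norm{\partial_tu}_{L^q_p(T)}$.

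For the time-H\"older estimates (the first inequality in each part) I would split $u(t)-u(s)=\big(u(t)-P_{t-s}u(s)\big)+\big(P_{t-s}-I\big)u(s)$. The first bracket equals $\int_s^tP_{t-r}f(r)\,dr$ and is handled as above, while $(P_{t-s}-I)u(s)$ is controlled by the spatial regularity already obtained. Optimising the split of the time integral between the part close to $t,s$, where $\partial_tu$ is the natural quantity, and the far part, controlled by the $\mathds{H}^q_{2,p}$-regularity, produces a Gagliardo--Nirenberg-type interpolation whose exponents $1-\frac1q-\delta$ and $\frac1q+\delta$ sum to $1$ and whose power of $\abs{t-s}$ is $\delta$; the constraint $2\delta+\frac{d}{p}+\frac{2}{q}<2$ is exactly the convergence condition for the corresponding kernel integral. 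Conceptually, this entire argument expresses that the maximal-regularity class $H^q_{2,p}(T)$ embeds into $C([0,T];B^{2-2/q}_{p,q}(\R^d))$ with a H\"older modulus in time, combined with the Besov--H\"older embedding $B^{2-2/q}_{p,q}\hookrightarrow C^{k,\varepsilon}$ under $\varepsilon+k+\frac{d}{p}+\frac{2}{q}<2$; one may alternatively run the proof purely through the Lions--Peetre trace theorem.

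I expect the main obstacle to be bookkeeping rather than conceptual: keeping the dependence on $T$ sharp (the $T^{-1/q}$ prefactor and the precise combination $\norm{u}_{\mathds{H}^q_{2,p}(T)}+T\norm{\partial_tu}_{L^q_p(T)}$) requires tracking the parabolic scaling through the extension operator and the kernel integrals, and pinning down the sharp interpolation exponents in the time-H\"older bound demands care in the optimisation of the split. Since the result is classical and is used here only as a black box for Theorem~\ref{stab}, in the paper it suffices to cite \cite{Zhang2011}.
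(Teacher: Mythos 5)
The paper does not actually prove this theorem: its entire proof is the citation ``See \cite[p. 22, 23, 36]{Fedrizzi}'', treating the embedding as a known result from parabolic regularity theory, exactly as you anticipated in your closing remark (you guessed the reference would be \cite{Zhang2011}; it is in fact Fedrizzi's work, but your instinct that a black-box citation suffices matches the paper). Your sketch is therefore a genuinely different route: a self-contained proof via the Duhamel representation $u(t)=\int_{-\infty}^{t}P_{t-s}f(s)\,\dd{s}$ with $f=\partial_tu-\Delta u$, Gaussian kernel bounds, and H\"older's inequality in space and time. Your exponent arithmetic is correct: $\norm{\nabla^kp_\tau(x-\cdot)-\nabla^kp_\tau(y-\cdot)}_{L^{p'}}\lesssim\abs{x-y}^{\varepsilon}\tau^{-\varepsilon/2-k/2-d/(2p)}$, and $q'$-integrability of this bound near $\tau=0$ is precisely $\varepsilon+k+\frac{d}{p}+\frac{2}{q}<2$, reproducing the hypotheses of parts 1 and 2. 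Two points would need care in a full write-up. First, with that single power bound the integral $\int_0^\infty$ diverges at $\tau=\infty$, so you must exploit the compact time support of the extended $f$ (or a separate far-field bound) to truncate it; this is also where the $T$-dependence and the prefactor $T^{-1/q}$ enter. Second, the split $u(t)-u(s)=\left(u(t)-P_{t-s}u(s)\right)+\left(P_{t-s}-I\right)u(s)$ as written has no free parameter, so it cannot by itself produce the product form $\norm{u}_{\mathds{H}^q_{2,p}(T)}^{1-\frac{1}{q}-\delta}\norm{\partial_tu}_{L^q_p(T)}^{\frac{1}{q}+\delta}$; you need to smooth at an independent scale $\eta$ (compare $u$ with $P_\eta u$ at both times, bound the middle increment by $\abs{t-s}^{1/q'}$ times a negative power of $\eta$, and optimise over $\eta$), which is the standard mollification argument behind such Gagliardo--Nirenberg exponents and is presumably what you mean by ``optimising the split''. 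With those repairs your argument is a correct, essentially classical proof; what the paper's citation buys is brevity, and what your route buys is self-containedness and a transparent accounting of where each hypothesis on $\varepsilon$, $\delta$, $p$, $q$ is used.
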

	\begin{proof}
		See \cite[p. 22, 23, 36]{Fedrizzi}.
	\end{proof}
	In the next lemma we identify every $u\in H^q_{2,p}$ with its regular version.
	\begin{Lemma}[It\=o formula for $H^q_{2,p}$-functions]
		\label{Ito}
		Let $T>0$, $p>1$ and $q>1$ satisfying \eqref{pq}. Let $X:\Omega\times[0,T]\to\R^d$ be a semimartingale on some filtrated probability space $\left(\Omega,\mathcal{F},\Prob,\left(\mathcal{F}_t\right)_{t\geq0}\right)$ of the form
		$$\dd{X}(t)=b(t)\dd{t}+\sigma(t)\dd{W}(t)$$
		where $W$ is a $d$-dimensional Brownian motion, $b:\Omega\times[0,T]\to\R^d$ and $\sigma:\Omega\times[0,T]\to\R^{d\times d}$ are progressively measurable with
		$$\Prob\left(\norm{b}_{L^1[0,T]}+\norm{a^{i,j}}_{L^\delta[0,T]}<\infty\right)=1, \ i,j=1,\dots,d$$
		for some $1<\delta\leq\infty$ where $a:=\sigma\sigma^\top$. Furthermore, assume that there exists a constant $C>0$ with
		$$\E\int_{0}^{T}f(t,X(t))\dd{t}\leq C\norm{f}_{L^{q/\delta*}_{p/\delta^*}(T)}$$
		for all $f\in L^{q/\delta^*}_{p/\delta^*}(T)$ where $\delta^*$ denotes the conjugate exponent of $\delta$.
		Then for any $u\in H^q_{2,p}(T)$, the It\=o formula holds, i.e.
		\begin{align*}
			u(t,X(t))-u(0,X(0))=&\int_{0}^{t}\partial_tu(s,X(s))\dd{s}+\int_{0}^{t}\nabla u(s,X(s))^\top b(s)\dd{s}\\
			&+\int_{0}^{t}\nabla u(s,X(s))^\top\sigma(s)\dd{W}(s)\\
			&+\frac{1}{2}\sum_{i,j=1}^{d}\int_{0}^{t}\partial_i\partial_j u(s,X(s))a^{i,j}(s)\dd{s}.
		\end{align*}
	\end{Lemma}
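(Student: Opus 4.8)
The plan is to approximate $u$ by smooth functions and pass to the limit in the classical Itô formula. Since \eqref{pq} gives $\frac{d}{p}+\frac{2}{q}<1<2$, both parts of Theorem \ref{embedding} apply: after identifying $u$ with its continuous version, $u$ and $\nabla u$ are bounded and (Hölder) continuous on $[0,T]\times\R^d$, and the embedding estimates are controlled by the $H^q_{2,p}(T)$-norm. Invoking density of smooth functions in $H^q_{2,p}(T)$, I would choose $u_n\in C^\infty([0,T]\times\R^d)$ with $u_n\to u$ in $H^q_{2,p}(T)$, so that $u_n\to u$ and $\nabla u_n\to\nabla u$ \emph{uniformly} on $[0,T]\times\R^d$, while $\partial_t u_n\to\partial_t u$ and $\partial_i\partial_j u_n\to\partial_i\partial_j u$ in $L^q_p(T)$. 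Each $u_n$ is $C^{1,2}$, so the classical Itô formula produces the four integral terms, and the task reduces to passing to the limit in each.

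The terms governed by uniform convergence are straightforward. The boundary difference $u_n(t,X(t))-u_n(0,X(0))$ converges to $u(t,X(t))-u(0,X(0))$ since $u_n\to u$ uniformly; the first-order term $\int_0^t\nabla u_n(s,X(s))^\top b(s)\,\dd{s}$ converges because $\nabla u_n\to\nabla u$ uniformly and $b\in L^1[0,T]$ almost surely; and for the stochastic term, note that $a=\sigma\sigma^\top\in L^\delta[0,T]$ forces $\int_0^t\abs{\sigma}^2_{HS}\,\dd{s}<\infty$ a.s., whence the quadratic variation of the difference of stochastic integrals is bounded by $\norm{\nabla u_n-\nabla u}_\infty^2\int_0^t\abs{\sigma}^2_{HS}\,\dd{s}\to0$ almost surely, giving convergence of $\int_0^t\nabla u_n(s,X(s))^\top\sigma(s)\,\dd{W}(s)$ in probability.

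The delicate terms are those carrying $\partial_i\partial_j u$ and $\partial_t u$, where convergence holds only in $L^q_p(T)$; here I would combine Hölder's inequality in time (exponents $\delta,\delta^*$) with the Krylov-type bound assumed in the hypothesis. Writing $g_n:=\partial_i\partial_j u_n-\partial_i\partial_j u$, one has
$$\abs{\int_0^t g_n(s,X(s))a^{i,j}(s)\,\dd{s}}\le\norm{a^{i,j}}_{L^\delta[0,T]}\left(\int_0^T\abs{g_n(s,X(s))}^{\delta^*}\,\dd{s}\right)^{1/\delta^*},$$
and applying the assumed estimate to $f=\abs{g_n}^{\delta^*}\in L^{q/\delta^*}_{p/\delta^*}(T)$ yields $\E\int_0^T\abs{g_n(s,X(s))}^{\delta^*}\,\dd{s}\le C\norm{g_n}_{L^q_p(T)}^{\delta^*}\to0$. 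The time-derivative term is handled identically, with the deterministic factor $T^{1/\delta}$ in place of $\norm{a^{i,j}}_{L^\delta[0,T]}$, so no further structure is needed there.

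The main obstacle is that $\norm{a^{i,j}}_{L^\delta[0,T]}$ and $\norm{b}_{L^1[0,T]}$ are only almost surely finite and need not be integrable, so one cannot take the product in the penultimate display directly in expectation. To circumvent this I would localize: set $\tau_N:=\inf\{t\ge0:\norm{b}_{L^1[0,t]}+\sum_{i,j}\norm{a^{i,j}}_{L^\delta[0,t]}\ge N\}\wedge T$ and first establish the formula on $[0,t\wedge\tau_N]$, where the coefficient norms are bounded by $N$; then Jensen's inequality (using $1/\delta^*\le1$) turns the bound into $N\,C^{1/\delta^*}\norm{g_n}_{L^q_p(T)}\to0$, closing the limit for the second-order term. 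Finally, letting $N\to\infty$ with $\tau_N\uparrow T$ almost surely and using the pathwise continuity of each integral in its upper limit recovers the identity on all of $[0,t]$; since the right-hand side converges in probability and the left-hand side converges a.s., the Itô formula holds as an almost sure identity.
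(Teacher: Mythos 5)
The paper does not actually prove this lemma: its proof consists of the single line ``See \cite{Stefan}.'' So there is no in-paper argument to compare yours against; judged on its own terms, your proposal is correct and is the standard mollification proof of It\=o--Krylov-type formulas, which is presumably also the strategy of the cited reference. The structure is right: since $\frac{d}{p}+\frac{2}{q}<1$, Theorem \ref{embedding} converts convergence $u_n\to u$ in $H^q_{2,p}(T)$ into uniform convergence of $u_n$ and $\nabla u_n$, which disposes of the boundary term, the $b$-term (using $\norm{b}_{L^1[0,T]}<\infty$ a.s.) and the stochastic term (the quadratic variation is bounded by $\norm{\nabla u_n-\nabla u}_\infty^2\sum_{i}\int_0^T a^{i,i}(s)\dd{s}\to0$ a.s., hence the integrals converge in probability); and the Krylov-type hypothesis handles the $\partial_tu$ and $\partial_i\partial_ju$ terms via H\"older in time with exponents $(\delta,\delta^*)$ together with the identity $\norm{\abs{g_n}^{\delta^*}}_{L^{q/\delta^*}_{p/\delta^*}(T)}=\norm{g_n}^{\delta^*}_{L^q_p(T)}$. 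One simplification: the localization by $\tau_N$ is unnecessary. The formula is an a.s. identity between fixed random variables, so it suffices that every term converges in probability; since $\left(\int_0^T\abs{g_n(s,X(s))}^{\delta^*}\dd{s}\right)^{1/\delta^*}\to0$ in probability (its $\delta^*$-th power converges in $L^1(\Prob)$) and $\norm{a^{i,j}}_{L^\delta[0,T]}$ is an a.s. finite random variable, the product tends to $0$ in probability with no stopping argument---exactly the reasoning you already apply to the $b$- and $\sigma$-terms. Two details worth making explicit in a write-up: density of smooth functions in $H^q_{2,p}(T)$ requires mollifying in time after extending $u$ beyond $t\in\{0,T\}$; and the Krylov bound applied to a function vanishing Lebesgue-a.e. is what guarantees that $\int_0^t\partial_i\partial_ju(s,X(s))a^{i,j}(s)\dd{s}$ and $\int_0^t\partial_tu(s,X(s))\dd{s}$ do not depend on the chosen versions of the a.e.-defined derivatives, which is needed for the statement itself to be meaningful.
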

	\begin{proof}
		See \cite{Stefan1}.
	\end{proof}
	Let $\phi$ be a locally integrable function on $\R^d$. The Hardy-Littlewood maximal function is defined by
	$$\mathcal{M}\phi(x):=\sup_{0<r<\infty}\frac{1}{\abs{B_r}}\int_{B_r}\phi(x+y)\dd{y}$$
	where $B_r$ is the Euclidean ball of radius $r$. The following result is cited from \cite{Zhang2011}.
	\begin{Lemma} \ 
		\label{Hardy-Littlewood}
		\begin{enumerate}
			\item	There exists a constant $C_d>0$ such that for all $\phi\in C^{\infty}\left(\R^d\right)$ and $x,y\in\R^d$,
			$$\abs{\phi(x)-\phi(y)}\leq C_d\abs{x-y}\left(\mathcal{M}\abs{\nabla\phi}(x)+\mathcal{M}\abs{\nabla\phi}(y)\right).$$
			\item For any $p>1$, there exists a constant $C_{d,p}$ such that for all $\phi\in L^p\left(\R^d\right)$,
			$$\norm{\mathcal{M}\phi}_{L^p}\leq C_{d,p}\norm{\phi}_{L^p.}$$
		\end{enumerate}
	\end{Lemma}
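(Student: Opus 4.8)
The statement splits into two classical facts: part~1 is a pointwise Poincar\'e-type estimate controlling the oscillation of a smooth function by the maximal function of its gradient, and part~2 is the Hardy--Littlewood maximal inequality. I would treat them separately, both by standard real-analysis arguments.

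For part~1, fix $x,y\in\R^d$, set $\rho:=\abs{x-y}$ and let $B:=B(x,2\rho)$ be the ball of radius $2\rho$ centred at $x$, so that $y\in B$. Writing $\phi_B$ for the average of $\phi$ over $B$, the triangle inequality gives $\abs{\phi(x)-\phi(y)}\leq\abs{\phi(x)-\phi_B}+\abs{\phi(y)-\phi_B}$. The core ingredient is the pointwise bound $\abs{\phi(z)-\phi_B}\leq C_d\int_B\abs{\nabla\phi(w)}\abs{z-w}^{1-d}\dd{w}$, valid for every $z\in B$, which I would derive by integrating $\phi$ along segments from $z$ to points of $B$ and applying the fundamental theorem of calculus. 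It then remains to dominate this truncated Riesz potential by the maximal function: decomposing $B$ into dyadic annuli around $z$, on the annulus at scale $2^{-k}\rho$ the kernel is comparable to $(2^{-k}\rho)^{1-d}$ while the mass of $\abs{\nabla\phi}$ there is at most $c_d(2^{-k}\rho)^d\,\mathcal{M}\abs{\nabla\phi}(z)$; summing the resulting geometric series in $k$ yields $\int_B\abs{\nabla\phi(w)}\abs{z-w}^{1-d}\dd{w}\leq C_d\rho\,\mathcal{M}\abs{\nabla\phi}(z)$. Applying this with $z=x$ and $z=y$ (using $B\subseteq B(y,3\rho)$ to re-centre the potential at $y$) and recalling $\rho=\abs{x-y}$ gives the claim with a single dimensional constant.

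For part~2, I would follow the classical route. First, the trivial endpoint $\norm{\mathcal{M}\phi}_{L^\infty}\leq\norm{\phi}_{L^\infty}$ is immediate, since every average of $\phi$ is bounded by $\norm{\phi}_{L^\infty}$. Next, the weak-type $(1,1)$ estimate $\abs{\{\mathcal{M}\phi>\lambda\}}\leq C_d\lambda^{-1}\norm{\phi}_{L^1}$ follows from the Vitali covering lemma: for each point of the level set there is a ball on which the average of $\abs{\phi}$ exceeds $\lambda$, and extracting a disjoint subfamily whose fivefold dilates cover the set converts the measure of the level set into a sum of such averages, bounded by $C_d\lambda^{-1}\norm{\phi}_{L^1}$. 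Finally, Marcinkiewicz interpolation between the weak-$(1,1)$ and the strong-$(\infty,\infty)$ bounds produces the strong-$(p,p)$ inequality for every $p>1$, with a constant $C_{d,p}$ that may blow up as $p\downarrow1$.

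The covering argument and the interpolation are textbook and require no real ingenuity. The step demanding the most care is the pointwise Poincar\'e inequality of part~1 together with the re-centring of the Riesz potential at $y$: one must verify that the constant remains purely dimensional (independent of $\phi$, $x$, $y$) and that the geometric summation over annuli is uniform in the base point, so that the contributions of $x$ and $y$ combine into the symmetric estimate with one common constant $C_d$.
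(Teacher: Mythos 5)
Your proposal is correct, but it is worth pointing out that the paper does not prove Lemma \ref{Hardy-Littlewood} at all: the result is stated as known and simply cited from \cite{Zhang2011}, where it is in turn quoted from the standard literature (part 1 is the pointwise Lipschitz-type characterization of Sobolev functions, part 2 is the classical Hardy--Littlewood maximal theorem). What you supply is the canonical self-contained argument for both facts, and it is sound: for part 1, the subrepresentation formula $\abs{\phi(z)-\phi_B}\leq C_d\int_B\abs{\nabla\phi(w)}\abs{z-w}^{1-d}\dd{w}$ on a ball containing both points (essentially Lemma 7.16 in Gilbarg--Trudinger), followed by the dyadic-annulus domination of the truncated Riesz potential by $C_d\rho\,\mathcal{M}\abs{\nabla\phi}(z)$, with the re-centring $B(x,2\rho)\subseteq B(y,3\rho)$ keeping the geometric series, and hence the constant, purely dimensional; for part 2, the weak-$(1,1)$ estimate via the Vitali covering lemma (note only that for an $L^1$ function the balls witnessing $\mathcal{M}\phi>\lambda$ have radii bounded by $(c\norm{\phi}_{L^1}/\lambda)^{1/d}$, so the $5r$-covering lemma applies, or one exhausts the open level set by compacta), the trivial $L^\infty$ bound, and Marcinkiewicz interpolation. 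The trade-off is the expected one: the paper's citation keeps the appendix short, since both statements are textbook material used as black boxes in the stability proof of Theorem \ref{stab}; your version makes the appendix self-contained at the cost of roughly a page of classical real analysis, and would be the right thing to include only if the paper aimed at full self-containment.
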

	\begin{Lemma}
		\label{Gronwall}
		Let $Z$ be an adapted non-negative stochastic process with continuous paths defined on $[0,\infty)$ that satisfies the inequality
		$$Z(t)\leq K\int_{0}^{t}\sup\limits_{0\leq r\leq s}Z(r)\dd{s}+M(t)+C,$$
		where $C\geq0$, $K\geq0$ and $M$ is a continuous local martingale with $M(0)=0$. Then for each $0<p<1$, there exist universal finite constants $c_1(p)$, $c_2(p)$ (not depending on $K$, $C$, $T$ and $M$) such that
		$$\E\left[\sup\limits_{0\leq t\leq T}Z(t)^p\right]\leq C^p c_2(p)e^{c_1(p)KT}\text{ for every }T\geq0.$$
	\end{Lemma}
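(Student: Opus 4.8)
The plan is to follow the argument of von Renesse and Scheutzow \cite{Scheutzow,Renesse}. Write $Z^*(t):=\sup_{0\le s\le t}Z(s)$ and first reduce to a tractable situation by localization: with $\tau_n:=\inf\{t\ge0:Z^*(t)+\langle M\rangle(t)\ge n\}$ the stopped process $M(\cdot\wedge\tau_n)$ is a genuine square-integrable martingale and $Z^*(\cdot\wedge\tau_n)$ is bounded, so every expectation below is finite. Since the bound I aim for will not depend on $n$, I can recover the claim by letting $n\to\infty$ and invoking Fatou's lemma. Throughout I may thus assume that $M$ is a true martingale and that $\E[\sup_{t\le T}Z(t)]<\infty$.

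Next I would pass to the running supremum. As $s\mapsto K\int_0^s Z^*(u)\dd{u}$ is non-decreasing, taking the supremum over $[0,t]$ in the hypothesis gives, pathwise,
$$Z^*(t)\le C+K\int_0^t Z^*(s)\dd{s}+\sup_{0\le s\le t}M(s).$$
Equivalently, the right-hand side $G(t):=C+M(t)+K\int_0^t Z^*(s)\dd{s}$ is a non-negative process, since it dominates $Z(t)\ge0$, of the form ``local martingale plus continuous increasing process'', hence a non-negative submartingale, and $Z(t)\le G(t)$ for all $t$. The decisive observation is that $Z$ is therefore dominated, in the sense of Lenglart, by the continuous non-decreasing adapted process $A(t):=C+K\int_0^t Z^*(s)\dd{s}$: optional stopping ($\E[M(\tau)]=0$) yields $\E[Z(\tau)]\le\E[A(\tau)]$ for every bounded stopping time $\tau$.

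Now I would invoke the Lenglart--L\'epingle--Pratelli inequality: for $0<p<1$ there is a universal constant $c_p$ (for instance $c_p=\tfrac{2-p}{1-p}$) with $\E[(Z^*(T))^p]\le c_p\,\E[A(T)^p]$. Using subadditivity of $x\mapsto x^p$ this reads $\E[(Z^*(T))^p]\le c_pC^p+c_pK^p\,\E[(\int_0^T Z^*(s)\dd{s})^p]$. To close this into the asserted exponential bound I would partition $[0,T]$ into finitely many subintervals of length $\delta$ chosen so that $c_p(K\delta)^p\le\tfrac12$, apply the estimate on each subinterval with $C$ replaced by the already-controlled contribution of the previous ones — here one uses that $Z^*$ is non-decreasing, so the history enters only as an additive $\mathcal{F}_{t_i}$-measurable term of size at most $K\delta\,Z^*(t_i)$ — and chain the resulting bounds. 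The number $T/\delta$ of subintervals is proportional to $KT$, which produces the factor $e^{c_1(p)KT}$ together with a prefactor $c_2(p)C^p$.

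The main obstacle is exactly this closing step. Because $0<p<1$, the concave power does not commute with the time integral — there is no pointwise estimate of $(\int_0^T Z^*(s)\dd{s})^p$ by $\int_0^T (Z^*(s))^p\dd{s}$ — so one cannot convert the Lenglart bound directly into a linear integral inequality and apply the classical Gronwall lemma, and the self-referential running supremum $Z^*$ inside the integral couples the subintervals. Organising the iteration so that the emerging constants depend only on $p$, and not on $M$, $K$, $C$ or $T$ separately, is the delicate point; it is precisely there that the finiteness of the $p$-th moment ($p<1$) of the maximal function of a non-negative (sub/super)martingale is exploited.
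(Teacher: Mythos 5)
The paper itself contains no proof of this lemma --- its ``proof'' is the single line ``See \cite{Renesse}'' --- so your attempt has to be judged against the argument in the cited literature rather than against anything in the text. Your skeleton is the right one and is in the spirit of von Renesse--Scheutzow: localization, the observation that $Z$ is Lenglart-dominated by $A(t)=C+K\int_0^{t}Z^*(s)\dd{s}$ via optional stopping, and the Lenglart--L\'epingle--Pratelli moment inequality $\E[(Z^*(T))^p]\le c_p\E[A(T)^p]$ with $c_p=\tfrac{2-p}{1-p}$ are all correct. The genuine gap is exactly where you say it is: the closing step is never carried out, and the one concrete mechanism you propose for it is wrong. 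If you restart the inequality on $[t_i,t_{i+1}]$, the hypothesis becomes
$$Z(t)\le\Bigl[C+K\int_0^{t_i}Z^*(s)\dd{s}+M(t_i)\Bigr]+K\int_{t_i}^{t}Z^*(s)\dd{s}+\bigl(M(t)-M(t_i)\bigr),$$
so the history does \emph{not} enter ``only as an additive term of size at most $K\delta Z^*(t_i)$'': the bracket contains the martingale value $M(t_i)$, which is signed and is not controlled by $Z^*(t_i)$; one only knows that the bracket is $\ge Z(t_i)\ge 0$, and estimating its $p$-th moment requires another application of Lenglart on $[0,t_i]$, which reintroduces the very term $\E[(\int_0^{t_i}Z^*(s)\dd{s})^p]$ you are trying to control. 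As written, the recursion does not close, and your final paragraph concedes as much.

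The iteration can be closed, however, without any restarting or conditional form of Lenglart. Work after your localization, so that $a_i:=\E[(Z^*(t_i))^p]<\infty$ on the grid $t_i=i\delta$. Apply the global Lenglart bound at time $t_{i+1}$ and split the time integral using monotonicity of $Z^*$ and subadditivity of $x\mapsto x^p$: from $\int_0^{t_{i+1}}Z^*(s)\dd{s}\le\delta\sum_{j=0}^{i}Z^*(t_{j+1})$ one gets
$$a_{i+1}\le c_pC^p+c_p(K\delta)^p\sum_{j=1}^{i+1}a_j.$$
Choosing $\delta$ so that $c_p(K\delta)^p=\tfrac12$ and absorbing the $a_{i+1}$ on the right yields $a_{i+1}\le 2c_pC^p+\sum_{j=1}^{i}a_j$, hence $a_i\le 2c_pC^p\,2^{\,i-1}$ by induction. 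Since the number of gridpoints needed to cover $[0,T]$ is at most $KT(2c_p)^{1/p}+1$, this is a bound of the form $C^pc_2(p)e^{c_1(p)KT}$ with $c_2(p)=4c_p$ and $c_1(p)=(\ln 2)(2c_p)^{1/p}$, depending only on $p$; Fatou's lemma then removes the localization. So your approach is salvageable by an elementary bookkeeping argument, but that decisive step --- the one you flag as ``the delicate point'' --- is missing, and the proof as submitted is incomplete.
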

	\begin{proof}
		See \cite{Renesse}.
	\end{proof}
	\section{The Strict Topology on $\mathbf{C_b(E)}$}
	\label{stricttop}
	In this subsection we want to consider the strict topology for the function space $C_b(E)$ of bounded, continuous functions on a polish space $E$ as a nontrivial example where the results given before are applicable.
	
	In this subsection, we assume that $E$ is a polish space. Equipping $C_b(E)$ with the usual supremum norm might have some drawbacks if $E$ is not (locally) compact. A well-known result is the following
	\begin{Proposition}
		$(C_b(E),\norm{\cdot}_\infty)$ is separable iff $E$ is compact.
	\end{Proposition}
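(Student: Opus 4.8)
The plan is to prove the two implications separately. Throughout I fix a complete metric $d$ on $E$ compatible with its topology, which exists since $E$ is polish; the key structural fact I will invoke is that for a \emph{complete} metric space compactness is equivalent to total boundedness.

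For the implication ``$E$ compact $\Rightarrow$ $C_b(E)$ separable'' I would argue by Stone--Weierstrass. Since $E$ is compact it is separable, so I fix a countable dense set $\{x_n\}_{n\in\N}$ and set $f_n(x):=d(x,x_n)$. These functions together with the constant $1$ generate a subalgebra of $C(E)=C_b(E)$ that separates points: if $x\neq y$ with $d(x,y)=\delta>0$, choosing $x_n$ with $d(x,x_n)<\delta/2$ gives $f_n(x)<\delta/2\le f_n(y)$. By the Stone--Weierstrass theorem this algebra is dense in $(C(E),\norm{\cdot}_\infty)$, and the polynomials with rational coefficients in finitely many of the $f_n$ form a countable dense subset, as required.

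For the converse I would argue by contraposition: if $E$ is not compact, then by completeness it is not totally bounded, so there exist $\varepsilon>0$ and an infinite $\varepsilon$-separated sequence $(x_n)_{n\in\N}$, obtained greedily, with $d(x_n,x_m)\geq\varepsilon$ for $n\neq m$. For each subset $S\subseteq\N$ I would define
$$f_S(x):=\sum_{n\in S}\left(1-\tfrac{2}{\varepsilon}d(x,x_n)\right)^+,$$
a sum of tent functions supported on the balls $B_{\varepsilon/2}(x_n)$, which are pairwise disjoint precisely because the $x_n$ are $\varepsilon$-separated. Since $f_S(x_n)=1$ for $n\in S$ and $f_S(x_n)=0$ for $n\notin S$, any two distinct subsets satisfy $\norm{f_S-f_{S'}}_\infty\geq1$. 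Thus $\{f_S:S\subseteq\N\}$ is an uncountable $1$-separated family, the balls of radius $1/2$ about its members are pairwise disjoint, no countable set can be dense, and $C_b(E)$ fails to be separable.

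I expect the only genuinely delicate step to be verifying that each $f_S$ is a well-defined element of $C_b(E)$. Boundedness ($0\le f_S\le1$) is immediate, but continuity must be argued: near any point at most one summand is nonzero, since a point lies in at most one of the disjoint balls $B_{\varepsilon/2}(x_n)$, so $f_S$ locally coincides with a single continuous tent function. This local finiteness — and hence the whole converse direction — rests entirely on the disjointness of the supports guaranteed by the $\varepsilon$-separation, which is why the failure of total boundedness is the crucial input.
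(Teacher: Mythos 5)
The paper itself offers no proof of this Proposition (it is quoted as a well-known fact), so there is no ``paper approach'' to compare against; your argument has to stand on its own. Its architecture is the standard and correct one: Stone--Weierstrass handles ``$E$ compact $\Rightarrow C_b(E)$ separable'' (the unital algebra generated by the functions $f_n=d(\cdot,x_n)$ separates points, and rational-coefficient polynomials in finitely many $f_n$ give a countable dense set), and for the converse, completeness converts non-compactness into failure of total boundedness, yielding an infinite $\varepsilon$-separated sequence and then an uncountable $1$-separated family $\{f_S\}_{S\subseteq\N}$, which rules out separability. Both halves of that skeleton are sound.

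The one genuine flaw is exactly in the step you flag as delicate: your justification of continuity of $f_S$. Pairwise disjointness of the balls $B_{\varepsilon/2}(x_n)$ gives that at most one summand is nonzero \emph{at} each point, but it does not give a \emph{neighborhood} of each point on which at most one summand is nonzero --- disjoint balls can accumulate at a point. Concretely, take $E=\ell^1$ and $x_n=(\varepsilon/2+1/n)e_n$ with $(e_n)$ the standard basis; then $\norm{x_n-x_m}_{\ell^1}=\varepsilon+1/n+1/m\geq\varepsilon$, yet every neighborhood of the origin meets $B_{\varepsilon/2}(x_n)$ for all large $n$ (the point $(2/n)\,x_n/\norm{x_n}_{\ell^1}$ lies in that ball and has norm $2/n$), so near $0$ the function $f_S$ does not coincide with a single tent, nor with any finite sum of tents. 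The conclusion is nevertheless true, and either of two one-line repairs closes the gap: (i) since at most one summand is nonzero at each point, the sum equals the pointwise supremum, i.e.
$$f_S(x)=\left(1-\tfrac{2}{\varepsilon}\,d\!\left(x,\{x_n:n\in S\}\right)\right)^+,$$
which is $\tfrac{2}{\varepsilon}$-Lipschitz because distance functions to sets are $1$-Lipschitz; or (ii) shrink the tents to half-width $\varepsilon/4$: then any two points lying in distinct supports are at distance greater than $\varepsilon/2$, so every ball of radius $\varepsilon/4$ meets at most one support, and your local-finiteness argument goes through verbatim (the values $f_S(x_n)\in\{0,1\}$ and hence the $1$-separation of the family are unaffected). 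With either repair the proof is complete.
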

	Additionally, if $E$ is not locally compact, the dual space of $(C_b(X),\norm{\cdot})$ may not coincide with the space of complex Borel measures on $E$. That might give rise to consider different topologies on $C_b(E)$. The strict topology on $C_b(E)$ is defined as follows.
	\begin{Definition}
		Define
		$$H^+(E):=\left\{u:E\to\R_{\geq0}:\{u\geq\alpha\}\text{ compact for all }\alpha>0\right\}.$$
		Then the strict topology $\beta$ on $C_b(E)$ shall be generated by the seminorms
		$$\rho_u(f):=\norm{uf}_\infty, \ f\in C_b(E).$$
	\end{Definition}
	\begin{Remark}
		It turns out that a sequence converges with respect to the strict topology iff the sequence converges uniformly on compact sets and is uniformly bounded (both with respect to the supremum norm).
		
		The strict topology has a rich structure and is discussed deeply in the context of Markov processes and Feller semigroups in \cite{van2011markov}. A remarkable property is the following: the topology $\beta$ is the finest locally convex topology such that the dual space coincides with the space of complex Borel measures on $E$.
	\end{Remark}
	\begin{Definition}
		A collection $\mathcal{N}$ of subsets of $E$ is called a network if for any $x\in E$ and open $O\subseteq E$ with $x\in O$ there exists some $N\in\mathcal{N}$ with $x\in N\subseteq O$.
	\end{Definition}
	\begin{Lemma}
		The space $(C(E),\beta)$ has a countable network. In particular, it is hereditarily Lindel\"of.
	\end{Lemma}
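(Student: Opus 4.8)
The plan is to construct a countable network by reducing the strict topology to a better-understood one on bounded sets. First I would invoke the classical fact from the theory of strict topologies (see the Remark above for the sequential statement, and \cite{van2011markov} for the topological one) that on each closed ball
$$B_n:=\left\{f\in C_b(E):\norm{f}_\infty\leq n\right\}, \quad n\in\N,$$
the subspace topology induced by $\beta$ coincides with the subspace topology induced by the compact-open topology (uniform convergence on compact subsets of $E$). This is the crucial reduction: although $\beta$ is strictly finer than the compact-open topology on all of $C_b(E)$, the two agree once one restricts to norm-bounded families.

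Next I would show that $C(E)$ endowed with the compact-open topology, denoted $C_k(E)$, possesses a countable network. As $E$ is polish it is second countable; fixing a countable base of $E$ and closing it under finite unions yields a countable family $\mathcal{B}$ that is a $k$-network, i.e.\ for every compact $K$ and open $U\supseteq K$ some finite subfamily of $\mathcal{B}$ has union lying between $K$ and $U$ (here compactness of $K$ furnishes the finite subcover). By Michael's theorem on $\aleph_0$-spaces this forces $C_k(E)$ to carry a countable network $\mathcal{N}_0$; alternatively one constructs $\mathcal{N}_0$ explicitly from $\mathcal{B}$ and a countable base of $\R$. Since subspaces inherit networks by restriction, $\mathcal{N}_n:=\left\{N\cap B_n:N\in\mathcal{N}_0\right\}$ is then a countable network for $\left(B_n,\beta|_{B_n}\right)$.

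I would then assemble $\mathcal{N}:=\bigcup_{n\in\N}\mathcal{N}_n$, which is a countable family of subsets of $C_b(E)$. To verify that $\mathcal{N}$ is a network for $\left(C_b(E),\beta\right)$, take any $f$ and any $\beta$-open $O\ni f$ and choose $n\in\N$ with $\norm{f}_\infty\leq n$; since $O\cap B_n$ is relatively $\beta$-open in $B_n$ and contains $f$, the network property of $\mathcal{N}_n$ supplies some $N\in\mathcal{N}_n\subseteq\mathcal{N}$ with $f\in N\subseteq O\cap B_n\subseteq O$. For the ``in particular'' clause I would use the standard fact that a countable network forces the hereditary Lindel\"of property: given an open $O$ with open cover $(U_i)_i$, for each $x\in O$ pick $i(x)$ with $x\in U_{i(x)}$ and some $N_x\in\mathcal{N}$ with $x\in N_x\subseteq U_{i(x)}$; the collection $\left\{N_x:x\in O\right\}$ is a countable subfamily of $\mathcal{N}$, and selecting one enclosing $U_i$ per member yields a countable subcover of $O$.

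The main obstacle I anticipate is the first step: upgrading the agreement of $\beta$ and the compact-open topology on bounded balls from the merely sequential level (all that the Remark provides) to an equality of topologies, which is what the network argument genuinely needs. The second external input, Michael's $\aleph_0$-space theorem, is the other delicate point; for a self-contained treatment one would instead build $\mathcal{N}_0$ by hand, taking finite intersections of sets of the form $\left\{f:f(P)\subseteq Q\right\}$ with $P$ a finite union of members of $\mathcal{B}$ and $Q$ a rational open interval, and checking the network property directly against the $k$-network $\mathcal{B}$.
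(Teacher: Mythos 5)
Your proposal is correct, but it is organized differently from the paper's proof, which is a single self-contained construction: the paper directly defines the countable family $[B,a,b,m]:=\{h\in C_b(E):h(B)\subseteq(a,b),\ \norm{h}_\infty<m\}$ ($B$ in a countable base of $E$, $a,b\in\Q$, $m\in\N$), closes it under finite intersections, and verifies the network property against a basic seminorm neighborhood $\{g:\rho_u(g-f)<\varepsilon\}$ by splitting $E$ into the compact set $K=\{u\geq\varepsilon/(2m)\}$ (handled by covering $K$ with finitely many base elements on which $f$ oscillates by less than $\tilde\varepsilon/2$) and its complement (handled by the norm bound $m$). Your route factors exactly these two mechanisms into two quotable results: the Buck-type fact that $\beta$ coincides with the compact-open topology on norm-bounded balls, and Michael's theorem that $C_k(E)$ has a countable network for second-countable $E$; your per-ball assembly $\mathcal{N}=\bigcup_n\mathcal{N}_n$ plays the role of the paper's norm parameter $m$, and indeed your fallback ``by hand'' construction in the last paragraph essentially reproduces the paper's network. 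What your version buys is modularity and a clearer conceptual picture (the strict topology as a mixed topology); what the paper's version buys is self-containedness, and in particular it never needs the topological (rather than sequential) coincidence as a black box --- the one point you rightly flag as the main obstacle. Note that with this paper's definition of $H^+(E)$ (only compact superlevel sets, no continuity), that obstacle dissolves by elementary means: $\1_K\in H^+(E)$ for every compact $K$, so the compact-open topology is coarser than $\beta$; conversely any $u\in H^+(E)$ is automatically bounded (an unbounded $u$ would produce a point where $u=\infty$ via compactness of $\{u\geq1\}$), so for $g,f\in B_n$ one has $\rho_u(g-f)\leq\norm{u}_\infty\sup_K\abs{g-f}+2n\sup_{E\setminus K}u$ with $K=\{u\geq\varepsilon/(4n)\}$, which gives the agreement of the two topologies on $B_n$. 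With that gap closed, your argument is complete; the hereditary Lindel\"of deduction you give is the standard one, which the paper omits as known.
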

	\begin{proof}
		The approach is analogous to the one for the topology of pointwise convergence, which can be found in Lemma 7.1 in \cite{GroupValued}. Let $\mathfrak{B}$ be a countable base for $E$. Then define for $B\in\mathfrak{B}$, $m\in\N$, $a,b\in\Q$ with $a<b$
		$$[B,a,b,m]:=\left\{h\in C_b(E):h(B)\subseteq(a,b),\norm{h}_\infty<m\right\}$$
		and
		$$\tilde{\mathcal{N}}:=\left\{[B,a,b,m]:B\in\mathfrak{B},a,b\in\Q,a<b,m\in\N\right\}.$$
		Now,
		$$\mathcal{N}:=\left\{\bigcap_{i=1}^nA_i:A_1,\dots,A_n\in\tilde{\mathcal{N}},n\in\N\right\}$$
		is countable and the candidate for the claimed network. Let $f\in C_b(E)$, $u\in H^+(E)$ and $\varepsilon>0$ be given. Then one has to show that there exists some $N\in\mathcal{N}$ such that
		$$f\in N\subseteq\left\{g\in C_b(E):\norm{u(g-f)}_\infty<\varepsilon\right\}.$$
		The first step is to use the compactness property of $u$: define
		$$m:=\inf\left\{n\in\N:n\geq\norm{f}_\infty\right\}$$
		and
		$$K:=\left\{x\in E:\abs{u(x)}\geq\frac{\varepsilon}{2m}\right\}.$$
		By construction, it holds for each $B\in\mathfrak{B}$, $a,b\in\Q$ with $a<b$
		$$\abs{u(x)(h(x)-f(x))}<\varepsilon \ \forall x\in E\setminus K, h\in[B,(a,b),m].$$
		If $K=\emptyset$, the proof would be finished. Thus, assume $K\neq\emptyset$ and define
		$$\tilde{\varepsilon}:=\frac{\varepsilon}{\norm{u}_\infty}.$$
		So, it remains to show that there exists an $N\in\mathcal{N}$ with $f\in N$ and
		$$\abs{h(x)-f(x)}<\tilde{\varepsilon} \ \forall x\in K,h\in N.$$
		Since $f$ is continuous, there exists for every $x\in K$ a $B_x\in\mathfrak{B}$ with
		$$f(B_x)\subseteq\left(f(x)-\tilde{\varepsilon}/4,f(x)+\tilde{\varepsilon}/4\right).$$
		Additionally, $K$ is compact. Consequently, there exist $x_1,\dots,x_n$, $n\in\N$ with
		$$K\subseteq\bigcup_{i=1}^nB_{x_i}.$$
		Now, choose suitable $a_1,b_1,\dots,a_n,b_n$ with
		$$f(x_i)<f(x_i)-\tilde{\varepsilon}/2<a_i<f(x_i)-\tilde{\varepsilon}/4<f(x_i)+\tilde{\varepsilon}/4<b_i<f(x_i)+\tilde{\varepsilon}/2, \ i=1\dots n.$$
		Then it holds $f\in\bigcap_{i=1}^n[B_{x_i},a_i,b_i,m]\in\mathcal{N}$ and
		$$\abs{h(x)-f(x)}<\tilde{\varepsilon} \ \forall x\in K,h\in\bigcap_{i=1}^n[B_{x_i},a_i,b_i,m].$$
	\end{proof} In the following corollary, the space $C_b(E)$ will be implicitly equipped with the strict topology $\beta$.
	\begin{Corollary}
		Let $\left(\Omega,\mathcal{F},\Prob\right)$ be some probability space and $X,X_n:\Omega\to C_b(E)$, $n\in\N$ be measurable maps. Then the following statements are equivalent
		\begin{enumerate}
			\item
			\begin{enumerate}
				\item$\lim\limits_{n\to\infty}\Prob\left(\rho_u(X-X_n)>\varepsilon\right)=0 \ \forall \varepsilon>0,u\in H^+(E)$,
				\item$\lim\limits_{n\to\infty}\Prob_{X_n}\left(O\right)=\Prob_X\left(O\right),$ for all open $O\subset C_b(E)$.
			\end{enumerate}
			\item $\lim\limits_{n\to\infty}\E\abs{f(X)-f(X_n)}=0 \ \forall f\in B_b(E).$
		\end{enumerate}
	\end{Corollary}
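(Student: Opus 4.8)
The plan is to obtain the corollary as a direct specialisation of Theorem~\ref{MainTheorem1} to the gauge space $\left(C_b(E),\mathcal{D}\right)$ whose gauge consists of the pseudometrics induced by the defining seminorms of $\beta$, namely
$$\mathcal{D}:=\left\{d_u:u\in H^+(E)\right\},\qquad d_u(f,g):=\rho_u(f-g)=\norm{u(f-g)}_\infty.$$
By construction the seminorms $\rho_u$, $u\in H^+(E)$, generate $\beta$, so the pseudometrics $d_u$ generate exactly the same topology and $\left(C_b(E),\beta\right)$ is precisely the gauge-space topology associated with $\mathcal{D}$. Under the identification $d_u(X,X_n)=\rho_u(X-X_n)$, the three conditions of the corollary coincide with the corresponding conditions of Theorem~\ref{MainTheorem1}. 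Hence the entire task reduces to checking the two standing hypotheses of that theorem for the map $X$.

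For this I would invoke the lemma established immediately above: $\left(C_b(E),\beta\right)$ has a countable network and is therefore hereditarily Lindel\"of. The remark following Lemma~\ref{outerreg} then yields that a hereditarily Lindel\"of gauge space is a $G_\delta$-space, so by Lemma~\ref{outerreg} every Borel probability measure on it is outer regular; in particular $\Prob_X$ is outer regular, which is the first hypothesis. For the second hypothesis, a hereditarily Lindel\"of space has the (standard) property that every one of its subspaces is Lindel\"of, so $X(\Omega)$ itself is Lindel\"of and one may simply take the null set $N=\emptyset$. With both hypotheses in place, Theorem~\ref{MainTheorem1} applies verbatim.

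Two cosmetic discrepancies between the formulation of the corollary and that of Theorem~\ref{MainTheorem1} then have to be reconciled. First, the corollary uses the strict inequality $\rho_u(X-X_n)>\varepsilon$ whereas Theorem~\ref{MainTheorem1} uses $\geq\varepsilon$; this is immaterial, since the sandwich $\{d_u>\varepsilon\}\subseteq\{d_u\geq\varepsilon\}\subseteq\{d_u>\varepsilon/2\}$ shows that the limit vanishes for all $\varepsilon>0$ in one formulation if and only if it does in the other. Second, the corollary writes the ordinary measure $\Prob$ in place of the outer measure $\Prob^*$; here I would argue that the countable network makes each pseudometric space $\left(C_b(E),d_u\right)$ separable (a countable dense set for the finer topology $\beta$ is a fortiori dense for the coarser $d_u$-topology), whence $\omega\mapsto d_u(X(\omega),X_n(\omega))$ is genuinely $\mathcal{F}$-measurable and $\Prob^*$ collapses to $\Prob$ on the events in question.

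I expect no serious obstacle, because the substantial topological work — exhibiting the countable network, and thereby hereditary Lindel\"ofness — has already been discharged in the preceding lemma, leaving only bookkeeping. The one point that genuinely deserves care is the last reconciliation: confirming that the events $\{\rho_u(X-X_n)>\varepsilon\}$ are measurable, so that the plain measure $\Prob$ may legitimately replace the outer measure $\Prob^*$ appearing in Theorem~\ref{MainTheorem1}. This hinges precisely on the separability of each $\left(C_b(E),d_u\right)$ inherited from the countable network, and once it is recorded the proof is complete.
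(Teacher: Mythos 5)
Your proposal is correct and follows exactly the route the paper intends: the corollary is stated there without proof precisely because it is the specialisation of Theorem~\ref{MainTheorem1} to the gauge $\{d_u:u\in H^+(E)\}$, with the preceding countable-network lemma supplying hereditary Lindel\"ofness and, via the remark after Lemma~\ref{outerreg} (hereditarily Lindel\"of gauge spaces are $G_\delta$-spaces), outer regularity of $\Prob_X$. Your extra bookkeeping — the equivalence of the strict and non-strict inequalities, and the measurability of $\rho_u(X-X_n)$ from separability (countable network $\Rightarrow$ $\beta$-separable $\Rightarrow$ $d_u$-separable), which lets $\Prob$ replace $\Prob^*$ — is sound and in fact makes explicit details the paper leaves tacit.
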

	\begin{Remark}
		The strict topology is not metrizable if $E$ is not compact. This can be seen as follows: assume that $(C(E),\beta)$ is metrizable and $E$ is not compact. Then the zero function has a countable neighborhood base and there is a sequence $(x_n)_{n\in\N}$ in $E$ that has no cluster points with $x_n\neq x_m$ if $n\neq m$. Thus, there exists a countable set $(u_i)_{i\in\N}\subset H^+(E)$ such that
		$$\forall u\in H^+(E) \ \exists i\in\N:u(x_n)\leq u_i(x_n) \ \forall n\in\N.$$
		Additionally, one has for every $u\in H^+(E)$
		$$\lim\limits_{n\to\infty}u(x_n)=0.$$
		Now, one can construct a $\hat{u}\in H^+(E)$ that contradicts to the inequality above. Define inductively
		$$n_1:=1, \ n_{i+1}:=\inf\left\{n\in\N:n>n_i,u_{i+1}(x_n)\leq 1/(i+1)\right\}$$
		and
		\begin{align*}
		\hat{u}\left(x_{n_i}\right)&:=u_i\left(x_{n_i}\right)+1/i,\\
		\hat{u}(y)&:=0, \ y\in E\text{ with }\nexists \ i\in\N: \ y=x_{n_i}.
		\end{align*}
		Indeed, the function $\hat{u}$ is well defined and it holds $\hat{u}\in H^+(E)$ with
		$$\hat{u}\left(x_{n_i}\right)>u_i\left(x_{n_i}\right), \ i\in\N.$$
	\end{Remark}
	\bibliographystyle{plain}
	\bibliography{Bibliography}
\end{document}